\numberwithin{equation}{section}
\theoremstyle{definition}
\newtheorem{thm}{Theorem}[section]
\newtheorem{prop}[thm]{Proposition}
\newtheorem{rem}[thm]{Remark}
\newtheorem{lemma}[thm]{Lemma}
\newtheorem{example}[thm]{Example}
\newcommand{\edge}{\mathcal{A}}
\newcommand{\p}{\mathcal{I}}
\newcommand{\h}{H}
\newcommand{\return}{\rho}
\newcommand{\indupot}{\tilde\phi}
\newcommand{\indumap}{\tilde f}
\newcommand{\edgesindu}{\tilde{\mathcal{A}}}
\newcommand{\inducoding}{\tilde\Sigma_B}
\newcommand{\indupote}{\tilde\phi}
\newcommand{\frat}{A}
\newcommand{\exponent}{\gamma(f)}
\newcommand{\edgepart}{\tilde{E}}
\newcommand{\inducingdomain}{\mathcal{D}}
\newcommand{\codingmap}{\tilde{\pi}}
\newcommand{\measure}{\tilde{\mu}}
\newcommand{\codingmeasure}{\tilde{\mu}'}
\newcommand{\shift}{\tilde{\sigma}}
\newcommand{\indupressure}{\tilde{p}}
\newcommand{\maps}{\mathcal{S}}
\newcommand{\conv}{\text{Conv}(\{\delta_{x_i}\}_{i\in\p})}
\newcommand{\indulimit}{\tilde{\Lambda}}
\newcommand{\LB}{\text{LB}(q)}
\newcommand{\dimension}{\delta}
\newcommand{\palpha}{p_\alpha}
\title[Thermodynamic formalism and multifractal analysis]{Thermodynamic formalism and multifractal analysis of Birkhoff averages for non-uniformly expanding interval maps with finitely many branches}
\author{Yuya Arima}
\date{\today}
\address{Graduate School of Mathematics, Nagoya University,
Furocho, Chikusaku, Nagoya, 464-8602, JAPAN} 
\email{yuya.arima.c0@math.nagoya-u.ac.jp}
\subjclass[2020]{28A78, 37D25, 37D35}
\thanks{{\it Keywords}: Thermodynamic formalism, Birkhoff spectra, Multifractal analysis, Non-uniformly expanding interval maps}
\begin{document}

\begin{abstract}
    In this paper, we perform a multifractal analysis of Birkhoff averages for interval maps with finitely many branches and parabolic fixed points. Using the thermodynamic approach, we strengthen the results of Johansson et al. \cite{JJTOPnonuniformly} on the conditional variational principle for the multifractal spectra of Birkhoff averages. To do this, we develop several refined properties of the thermodynamic formalism for non-uniformly expanding interval maps. 
\end{abstract}

\maketitle

\section{Introduction}

Let $f:\Lambda\rightarrow \Lambda$ be a dynamical system on a compact subset $\Lambda$ of $[0,1]$ and let $\phi$ be a continuous potential on $\Lambda$.
The Birkhoff average of $\phi$ at $x\in \Lambda$ is defined by the time average
\[
\lim_{n\to\infty}\frac{1}{n}\sum_{i=0}^{n-1}\phi(f^i(x))
\]
whenever the limit exists. 
Birkhoff averages provide a way to characterize the dynamical system $f$.
For a given $f$-invariant ergodic Borel probability measure $\mu$ on $\Lambda$,
Birkhoff's ergodic theorem yields that  for $\mu$-a.e. $x\in \Lambda$ the Birkhoff average of $\phi$ at $x$ converges to the space average $\int \phi d\mu$.
Thus, for $\alpha\neq \int \phi d\mu$ the set $B(\alpha)$ of points that the Birkhoff average of $\phi$ converges to $\alpha$ is negligible with respect to $\mu$. However, there is still a possibility that $B(\alpha)$  might be a large set from another point of view. This raises the following natural questions: What are the typical or exceptional Birkhoff averages? How large is the set $B(\alpha)$? To answer these questions we define the Birkhoff spectrum $\alpha\mapsto b(\alpha)$, where $b(\alpha)$ denotes  the Hausdorff dimension with respect to the Euclidean metric on $\mathbb{R}$ of the set $B(\alpha)$ and study its properties. 
We refer the reader to  the books Pesin \cite{Pesinbook} and Barreira \cite{Barreirabook} for an introduction to the subject of dynamical systems and the dimension theory.
In the uniformly hyperbolic case, the Birkhoff spectrum for a H\"older continuous potential has been well studied  by Barreira and Saussol \cite{BarreiraSaussol}. In the non-uniformly hyperbolic case, Lyapunov spectrum, that is, the Birkhoff spectrum for the geometric potential has also been well studied (see \cite{GelfertRams}, \cite{kessebohmer2004multifractaltobeappdated}, \cite{Nakaishi}, \cite{PollicottWeiss} and also \cite{Iommilyapunov}).
For a general continuous potential, Johansson et al. \cite{JJTOPnonuniformly} established the conditional variational principle (see also \cite{MixedJT} for results on mixed multifractal spectra of Birkhoff averages). 
Later, Climenhaga \cite{simultaneous} strengthened their results for non-uniformly expanding maps with a single parabolic fixed point.  
It then follows from these results that the Birkhoff spectrum is continuous and monotone on a certain domain. However, the following natural questions remain open: Is it real-analytic and strictly monotone on such a domain? For $\alpha\in \mathbb{R}$ is there an Borel probability measure $\mu$ on $\Lambda$ such that the Hausdorff dimension of $\mu$ is $b(\alpha)$ and $\int \phi d\mu=\alpha$, and, if such a measure exists, is it unique? In this paper, we provide positive answers to these questions.

Let $I:=[0,1]$. 
In this paper, for $A \subset I$, $\operatorname{Int}(A)$ and $\overline{A}$ denote its interior and closure in the Euclidean metric on $\mathbb{R}$.
A map $f:I\rightarrow I$ is said to be non-uniformly expanding interval map if $f$ satisfies the following conditions:
\begin{itemize}
    \item[(NEI1)] There exist a finite index set $\edge\subset\mathbb
    N$ with $\#\edge\geq 2$ and a family $\{\Delta_{i}\}_{i\in \edge}$ of  subintervals of $I$ such that for each $i,j\in\edge$ with $i\neq j$ we have $\text{int}(\Delta_i)\cap\text{int}(\Delta_j)=\emptyset$. 
    \item[(NEI2)] For all $i\in \edge$ the map $f|_{\Delta_i}:\Delta_i\rightarrow f(\Delta_i)$ is a $C^{1+\varepsilon}$ diffeomorphism for some $\varepsilon>0$ and $(0,1)\subset f(\Delta_i)\subset [0,1]$. Furthermore, 
    there exists a open set $W_i$ such that $\overline{\Delta_i}\subset W_i$ and $f|_{\Delta_i}$ extends to a $C^{1+\varepsilon}$ diffeomorphism $f_i$ from $W_i$ onto its images.
    \item[(NEI3)] There exists a non-empty set $\p\subset \edge$ of parabolic indexes such that for each $i\in \p$, the map 
    $f_i$ has a unique fixed point $x_i\in \overline{\Delta_i}$ satisfying $|f_i'(x_i)|=1$, 
    $|f_i'(x)|>1$ for all $x\in W_i\setminus\{x_i\}$
    and $f_i$ is $C^2$ on $W_i\setminus\{x_i\}$. Moreover, for all $i\in\h:=\edge\setminus\p$ the map $f_i$ is $C^2$ on $W_i$ and there exists $c>1$ such that for all $i\in\h$ and $x\in W_i$ we have $|f'_i(x)|>c$.
\end{itemize}
Let $f$ be a non-uniformly expanding interval map.
For each $i\in \edge$ we denote by $T_i$  the inverse of $f_i$. 
For each $n\in\mathbb{N}$ and $\omega\in \edge^n$ we set
\begin{align}\label{eq def basic set}
T_\omega :=T_{\omega_1}\circ\cdots\circ T_{\omega_n}
\text{ and }
\bar\Delta_\omega:=T_\omega([0,1]).
\end{align}
Then, by \cite[Proposition 3.1]{MixedJT} the Euclidean diameter  $|\bar \Delta_{\omega}|$ of the set $\bar\Delta_\omega$  converges to $0$, uniformly in all sequences, that is, 
\begin{align}\label{eq uniformly decay of sylinders}
\lim_{n\to\infty}\max_{\omega\in \edge^n}|\bar\Delta_\omega|=0.    
\end{align}
Therefore, since for each $n\in\mathbb{N}$ and $\omega\in \edge^n$ the set $\bar\Delta_\omega$ is compact, for each $\omega\in \edge^{\mathbb{N}}$ the set $\bigcap_{n\in\mathbb{N}}\bar\Delta_{\omega_1\cdots\omega_n}$ is singleton.
We define the limit set $\Lambda$ of $f$ by 
\[
\Lambda:=\bigcup_{\omega\in\edge^{\mathbb{N}}}\bigcap_{n\in\mathbb{N}}\bar\Delta_{\omega_1\cdots\omega_n}.
\]
In this paper, for $J\subset [0,1]$ we always assume that $J$ is endowed with the relative topology from $[0,1]$.

Next, we explain conditions regarding a induced map of a non-uniformly expanding interval map $f$.
For all $n\in\mathbb{N}$ and $\omega\in \edge^n$ we set 
\[
I_\omega:=\bar\Delta_\omega\cap\Lambda.
\]
Define
\[
\inducingdomain:=\bigcup_{i\in \h}I_i\cup
\bigcup_{i\in\p}\bigcup_{j\in \edge\setminus\{i\}}I_{ij}.
\]
We define the return time function $\return:\inducingdomain\rightarrow \mathbb{N}$ by
\[
\return(x):=\inf \{n\in\mathbb{N}:f^n(x)\in \inducingdomain\}
\]
and the induced map $\indumap:\{\return<\infty\}\rightarrow I$ by
\[
 \indumap(x):=f^{\return(x)}(x).
\]
This construction of the induced map $\indumap$ from $f$ is commonly used in the study of the Bowen-Series map associated with Kleinian Schottky groups with parabolic generators (see \cite{JKScuspidal}, \cite{Frechet}, \cite{jaerisch2022multifractal}, \cite{kessebohmer2004multifractaltobeappdated}).
The following conditions allow us to analyze $f$ by using $\indumap$:
\begin{itemize}
    \item[(F1)] There exist a constant $C\geq 1$ and a exponent $\exponent>0$ such that for all $n\in\mathbb{N}$ and $x\in \{\return=n\}$ we have 
    \[
    \frac{1}{C}\leq\frac{|\indumap'(x)|}{n^{1+\gamma(f)}}\leq C 
    \]
    \item[(F2)] The induced map $\indumap$ satisfies the Renyi condition, that is,
    \[
    \sup_{n\in\mathbb{N}}
    \sup_{j\in\p}\sup_{i\in \edge\setminus\{j\}}
    \sup_{x\in\{\return=n\}\cap I_{ij}}
    \frac{|\indumap''(x)|}{|\indumap'(x)|^2}<\infty.
    \]
\end{itemize}
    The induced map $\indumap$ is said to be admissible if $\indumap$ satisfies (F1) and (F2). 

    As in \cite{JJTOPnonuniformly}, our main examples are the following:
\begin{example}
    Let $f:I\rightarrow I$ be the Manneville-Pomeau map defined by 
    $f(x)=x+x^{1+\beta}$ mod $1$, where $\beta>0$. 
    The map $f$ has the parabolic fixed point at $0$, while it is expanding everywhere else. It is straightforward to verify that $f$ is a non-uniformly expanding interval map. 
    Furthermore, 
    by \cite{Thaler} (see also \cite{Nakaishi}), one can show that $\indumap$ satisfies (F1) with $\exponent=1/\beta$ and (F2).
\end{example}
\begin{example}
    Let $f:[0,1]\rightarrow[0,1]$ be defined by $f(x):=x/(1-x)$ for $0\leq x\leq 1/2$ and $f(x):=(2x-1)/x$ for $1/2< x\leq 1$. Then, the map $f$ has parabolic fixed points at $0$ and $1$ but is expanding everywhere else. It is not difficult to verify that $f$ is a non-uniformly expanding interval map. Moreover,
    $\indumap$ satisfies (F1) with $\exponent=1$ and (F2).
\end{example}

Let $\phi:\Lambda\rightarrow\mathbb{R}$ be a continuous function.
We define the induced potential $\indupote:\Lambda\cap\{\return<\infty\}\rightarrow\mathbb{R}$ of $\phi$ by 
\begin{align}\label{eq def indu pot}
\indupote(x):=\sum_{i=0}^{\return(x)-1}\phi(f(x)).
\end{align}
In this paper, we always require the following condition (see Definition \ref{def locally holder}):
\begin{itemize}
    \item[(H)] There exists $\beta>0$ such that $\indupote$ is locally H\"older with exponent $\beta$. 
\end{itemize}
We denote by $\mathcal{R}$ the set of all continuous function $\phi$ on $\Lambda$ satisfying (H).
The level set we consider is given by 
\[
\Lambda_\alpha:=\left\{x\in \Lambda:\lim_{n\to\infty}\frac{1}{n}\sum_{i=0}^{n-1}\phi(f^i(x))=\alpha\right\}\ \ (\alpha\in\mathbb{R}).
\]
We define the Birkhoff spectrum $b:\mathbb{R}\rightarrow [0,1]$ by 
\[
b(\alpha)=\dim_H(\Lambda_\alpha),
\]
where $\dim_H(\cdot)$ denotes the Hausdorff dimension with respect to the Euclidean metric on $\mathbb{R}$. We will use the following notations: For $i\in\p$ we set
\begin{align*}
&\alpha_i:=\phi(x_i),
\ \alpha_{\min}:=\inf_{\mu\in M(f)}\left\{\int\phi d\mu\right\}\text{ and }
    \alpha_{\max}:=\sup_{\mu\in M(f)}\left\{\int\phi d\mu\right\},
\end{align*}
where $M(f)$ denotes the set of $f$-invariant Borel probability measures on $\Lambda$. 
Note that $f$ has full-branches. Hence, by the convexity of the set $M(f)$ and Birkhoff's ergodic theorem, we have 
$
\{\alpha\in \mathbb{R}:B(\alpha)\neq \emptyset\}=[\alpha_{\text{min}},\alpha_{\text{max}}].
$

For $\mu\in M(f)$ we define $\lambda(\mu):=\int \log |f'|d\mu$ and denote by $h(\mu)$ the measure-theoretic entropy defined as \cite{walters2000introduction}.
Let $\delta:=\dim_H(\Lambda)$. If $\exponent>2/\delta-1$
then there exists a unique ergodic measure $\mu_\delta\in M(f)$ of full dimension, that is, $\delta=\inf \{\dim_H(Z):\mu_\delta(Z)=1\}$. 
We define 
\begin{align}\label{eq def frat}
\frat:=\left\{
\begin{array}{cc}
[\underline{\alpha}_\p,\overline{\alpha}_\p] 
&\text{ if }\exponent\leq 2/\delta-1\\
\left[\min\{\int \phi\mu_\delta,\underline{\alpha}_\p\},
\max\{\int \phi\mu_\delta,\overline{\alpha}_\p\}\right] 
& \text{ if }\exponent> 2/\delta-1
,
\end{array}
\right.
\end{align}
where $\underline{\alpha}_\p:=\min_{i\in\p}\{\alpha_i\}$ and $\overline{\alpha}_\p:=\max_{i\in\p}\{\alpha_i\}$.

\begin{thm}\label{thm main}
Let $f$ is a non-uniformly expanding interval map having the admissible induced map $\indumap$ and $\phi\in \mathcal{R}$.
Then, we have the following 
\begin{itemize}
    \item[(1)] For all $\alpha\in (\alpha_{\min} , \alpha_{\max} ) \setminus \frat$ there exists a unique ergodic measure $\mu_\alpha\in M(f)$ such that $\lambda(\mu_\alpha)>0$ and 
$
b(\alpha)={h(\mu_\alpha)}/\lambda(\mu_\alpha)$.
\item[(2)] The function $b$ is real-analytic on $(\alpha_{\min} , \alpha_{\max} ) \setminus \frat$
\item[(3)] For all $\alpha\in \frat$ we have $b(\alpha)=\delta$ and for all $\alpha\in (\alpha_{\min} , \alpha_{\max} ) \setminus \frat$ we have $b(\alpha)<\delta$. Moreover, $b$ is strictly increasing on $(\alpha_{\min},\min \frat)$ and strictly decreasing on $(\max \frat,\alpha_{\max})$.
\end{itemize}
\end{thm}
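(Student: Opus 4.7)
The plan is to reduce everything to the thermodynamic formalism of the induced map $\indumap$. Thanks to (F1), (F2) and (H), the system $(\indumap,\indupote,\log|\indumap'|)$ fits the Mauldin-Urbanski / Sarig framework of countable Markov shifts with locally H\"older potentials; in particular the two-parameter induced pressure $\indupressure(q,t)$ associated to the potential $q\indupote-t\log|\indumap'|$ is real-analytic and strictly convex on its open finiteness domain, and admits a unique ergodic Gibbs/equilibrium state $\tilde{\mu}_{q,t}$ there. The Bowen-type equation $\indupressure(q,T(q))=0$ then defines, via the implicit function theorem using $\partial_t\indupressure<0$, a real-analytic function $T$ on an open interval $Q\subset\mathbb{R}$, with $T(0)=\dimension$ by Bowen's formula.

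For each $q\in Q$, apply the Abramov-Kac construction to lift $\tilde{\mu}_q:=\tilde{\mu}_{q,T(q)}$ to a unique $f$-invariant ergodic measure $\mu_q\in M(f)$; (F1) guarantees that $\return$ is $\tilde{\mu}_q$-integrable. Abramov's formulas combined with $\indupressure(q,T(q))=0$ give
\[
T(q)\lambda(\mu_q)=h(\mu_q)+q\alpha(q),\qquad \alpha(q):=\int\phi\,d\mu_q,
\]
and differentiating the defining equation yields $T'(q)=\alpha(q)/\lambda(\mu_q)$; eliminating $\alpha$ produces the fundamental identity $h(\mu_q)/\lambda(\mu_q)=T(q)-qT'(q)$. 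Strict convexity of $\indupressure$ makes $q\mapsto\alpha(q)$ a real-analytic diffeomorphism from $Q$ onto its image $J\subset\mathbb{R}$. Combining the conditional variational principle of Johansson et al.\ with the fact that $\tilde{\mu}_q$ is the unique equilibrium state (so its Abramov lift uniquely attains the supremum in the variational formula for $b(\alpha(q))$) gives $b(\alpha)=h(\mu_\alpha)/\lambda(\mu_\alpha)$ for every $\alpha\in J$, with $\mu_\alpha:=\mu_{q(\alpha)}$, proving (1) on $J$; real-analyticity of $b$ on $J$ in (2) then follows by the chain rule.

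The main obstacle, and the crux of the argument, is to identify $J$ with $(\alpha_{\min},\alpha_{\max})\setminus\frat$ and to prove $b\equiv\dimension$ on $\frat$. As $q$ approaches the boundary of $Q$, the Gibbs state $\tilde{\mu}_q$ loses mass in the induced system (a Sarig-type phase transition), and the projected measures $\mu_q$ converge weak$^*$ to elements of $\conv$, or to the convex hull of $\{\delta_{x_i}\}_{i\in\p}\cup\{\mu_\dimension\}$ in the regime $\exponent>2/\dimension-1$ where Kac-integrability of $\return$ holds at $(q,t)=(0,\dimension)$. A direct computation shows that the set of Birkhoff averages of such limit measures is exactly $\frat$, pinning down the endpoints of $J$. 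To obtain $b(\alpha)=\dimension$ throughout $\frat$, for each $\alpha\in\frat$ I would construct approximating ergodic measures with $h/\lambda\to\dimension$ by convex interpolation of $\mu_\dimension$ (or suitable Markov approximants, when $\mu_\dimension$ does not exist) with periodic orbits spending a controlled fraction of time in shrinking neighbourhoods of the points $\{x_i\}_{i\in\p}$, and then invoke the variational principle. The strict inequality $b(\alpha)<\dimension$ for $\alpha\in J$ follows from strict convexity of $T$ via the computation $\tfrac{d}{dq}(T(q)-qT'(q))=-qT''(q)$, which is nonzero whenever $q\neq 0$; and strict monotonicity on $(\alpha_{\min},\min\frat)$ and $(\max\frat,\alpha_{\max})$ in (3) follows from this monotonicity of $b$ in $q$ combined with the diffeomorphism property of $\alpha(\cdot)$ on each component of $Q$.
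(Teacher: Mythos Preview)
There is a genuine gap at the very first step. The two–parameter induced pressure you use, $\indupressure(q,t)=\tilde P(q\indupote-t\log|\indumap'|)$, need not have an open finiteness domain. By (F1) and continuity of $\phi$, for $\omega\in\edgepart_n$ accumulating on $x_i$ one has $\indupote([\omega])\approx n\alpha_i$ and $\log|\indumap'|([\omega])\approx(1+\exponent)\log n$, so the first–level partition sum is comparable to $\sum_i\sum_n n^{-t(1+\exponent)}e^{qn\alpha_i}$; finiteness therefore forces $q\alpha_i\le 0$ for every $i\in\p$. If two parabolic values have opposite signs (which the hypotheses allow—think of Example~1.2 with a potential satisfying $\phi(0)>0>\phi(1)$), this pins $q=0$ and the finiteness domain has empty interior; your Bowen equation $\indupressure(q,T(q))=0$ then defines no real–analytic $T$ away from $q=0$, and the whole Legendre scheme collapses. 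This is precisely why the paper carries the return time as a third variable and works with the identity $\indupressure(b,q,p(b,q))=0$ on the open set $\mathcal N=\{(b,q):p(b,q)>\max_{i\in\p}(-q\alpha_i)\}$ (Lemma~\ref{lem finite pressure on a special parameter}, Theorem~\ref{thm uniquness and existence of the equilibrium state}); the extra $-s\rho$ term is exactly what regularises the induced pressure.

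There is a second, independent problem. Even where $T$ is defined, the identity $b(\alpha(q))=T(q)-qT'(q)$ does not follow from your argument and is in fact the wrong object. From $P(q\phi-T(q)\log|f'|)=0$ one gets $T(q)=\sup_\nu\{h(\nu)/\lambda(\nu)+q\,\nu(\phi)/\lambda(\nu)\}$, so Legendre duality yields
\[
T(q)-qT'(q)=\sup\Big\{\tfrac{h(\nu)}{\lambda(\nu)}:\ \tfrac{\nu(\phi)}{\lambda(\nu)}=T'(q)\Big\},
\]
the dimension spectrum of the \emph{ratio} $\phi/\log|f'|$, not of the Birkhoff average of $\phi$. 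Uniqueness of $\tilde\mu_q$ as an equilibrium state says nothing about maximising $h(\nu)/\lambda(\nu)$ under the constraint $\nu(\phi)=\alpha$; these are different variational problems. The paper's route is to fix $b=b(\alpha)$ (known from the conditional variational principle), show $b(\alpha)<\dimension$ so that $\{b(\alpha)\}\times[0,\infty)\subset\mathcal N$, and then locate the unique $q(\alpha)$ with $p_\alpha(b(\alpha),q(\alpha))=0$ and $\partial_q p_\alpha(b(\alpha),q(\alpha))=0$ (Proposition~\ref{prop relationship}); the shift of the potential by $\alpha$ in $p_\alpha$ is exactly what produces the correct upper bound $h(\nu)/\lambda(\nu)\le b(\alpha)$ for all $\nu$ with $\nu(\phi)=\alpha$.
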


Our main difficulty arises from the lack of uniform hyperbolicity caused by the presence of parabolic fixed points. In the non-uniformly hyperbolic setting, the thermodynamic formalism does not work well as in  the uniformly hyperbolic case.
Namely, in general, for $\phi\in \mathcal{R}$ and $(b,q)\in\mathbb{R}^2$ an equilibrium measure $\mu$ for the potential $-q\phi-b\log |f'|$ with $\lambda(\mu)>0$  does not exist and the pressure function $(b,q)\mapsto P(-q\phi-b\log |f'|)$ is not differentiable on $\mathbb{R}^2$, where $P(-q\phi-b\log |f'|)$ denotes the topological pressure of $-q\phi-b\log |f'|$ (see \eqref{eq def non-induced pressure}). 
This difficulty makes it challenging to directly apply the thermodynamic approach developed by Barreira and Saussol \cite{BarreiraSaussol} for the uniformly hyperbolic case.
To overcome these difficulties we first determine the domain $\mathcal{N}\subset \mathbb{R}^2$ such that for all $(b,q)\in \mathcal{N}$ an equilibrium measure $\mu$ for the potential $-q\phi-b\log |f'|$ with $\lambda(\mu)>0$ exists uniquely, and the  pressure function is real-analytic on $\mathcal{N}$. 
We then prove our main results by combining the above results on the thermodynamic formalism with careful arguments involving the variational principle and the convexity of the pressure function.     
For non-uniformly expanding maps with a single parabolic fixed point and a continuous potential, Climenhaga \cite[Theorem 3.9]{simultaneous} obtained a result relating the topological pressure to the Birkhoff spectrum.   
However, in the non-uniformly hyperbolic setting, the thermodynamic formalism does not work effectively, as explained above. Consequently, unlike in the uniformly hyperbolic case, it is difficult to derive our results directly from \cite[Theorem 3.9]{simultaneous}. 
Moreover, while the assumption that non-uniformly expanding maps have a single parabolic fixed point plays an important role in his proofs (see \cite[Theorem 3.3]{simultaneous}), our main results allow for several parabolic fixed points.
We also note that Iommi and Jordan \cite{IommiJordanquotients} proved (2) of Theorem \ref{thm main} for Manneville-Pomeau type maps with a single parabolic fixed point and $\phi\in \mathcal{R}$ by relating the Birkhoff spectrum to the multifractal spectrum defined via the limit of the quotient of the Birkhoff sums of an induced potential of $\phi$ and a return time function with respect to an inducing domain.
However, we emphasize that our proof differs substantially from \cite{IommiJordanquotients}.

The structure of the paper is as follows. In Section \ref{sec preliminary}, we introduce the tools that will be used in Sections \ref{sec thermodynamic} and \ref{sec multifractal}. Section \ref{sec thermodynamic} is devoted to developing the thermodynamic formalism for a non-uniformly expanding interval map. In Section \ref{sec multifractal}, we prove Theorem \ref{thm main}.

\textbf{Notations.} Throughout we shall use the following notation:
For a index set $\mathcal{Q}$ and $\{a_{q}\}_{q\in\mathcal{Q}},\{b_{q}\}_{q\in\mathcal{Q}}\subset[0,\infty]$ we write $a_q\ll b_q$ if there exists a constant $C\geq 1$ such that for all $q\in \mathcal{Q}$ we have $a_q\leq Cb_q$. If we have $a_q\ll b_q$ and $b_q\ll a_q$ then we write $a_q\asymp b_q$.
For a probability space $(X,\mathcal{B})$, a probability measure $\mu$ on $(X,\mathcal{B})$ and a measurable function $\psi:X\rightarrow \mathbb{R}$ we set
$\mu(\psi):=\int \psi d\mu$.

\section{Preliminary}\label{sec preliminary}
Let $f$ be a non-uniformly expanding interval map having the admissible induced map $\indumap$ and let $\phi:\Lambda\rightarrow \mathbb R$ be a continuous function. We first introduce the topological pressure of $\phi$ with respect to the dynamical system $f:\Lambda\rightarrow \Lambda$.
The topological pressure of $\phi$ is defined by 
\begin{align}\label{eq def non-induced pressure}
P(\phi):=\sup\left\{h(\mu)+\mu(\phi):\mu\in M(f)\right\}.    
\end{align}
We say that $\mu\in M(f)$ is an equilibrium measure for $\phi$ if $\mu$ satisfies
$
P(\phi)=h(\mu)+\mu(\phi).
$
We are interested in the pressure function $p:\mathbb{R}^2\rightarrow \mathbb{R}$ given by
\[
p(b,q):=P(-q\phi-b\log|f'|).
\]
Let 
\[
\mathcal{N}:=\left\{(b,q)\in \mathbb{R}^2:p(b,q)>\max_{i\in\p}\left\{-q\alpha_i\right\}\right\}.
\]
Note that since the function $p$ is convex on $\mathbb{R}^2$, the set $\mathcal{N}$ is open.

Next, we describe a coding space of the induced map $\indumap$.
For $n\in\mathbb{N}$ we define 
\begin{align*}
\edgepart_n:=\{\omega\in \edge^{n+2}:\text{Int}(\Delta_\omega)\cap\Lambda\subset \{\return=n\}\}
\text{ and }
   \edgesindu:=\bigcup_{n\in\mathbb{N}}\edgepart_n.
\end{align*}
We first notice that $\{\return<\infty\}=\bigcup_{\omega\in \edgesindu}I_\omega$.
Moreover, $\omega\in \edgepart_1$ has the form
\begin{align}\label{eq form edges 1}
&\omega=\omega_1\omega_2\omega_3 \text{ for some }\omega_1\in\edge,
\omega_2\in\left\{
\begin{array}{cc}
\edge\setminus\{\omega_1\}
&\text{ if }\omega_1\in \p\\
\edge
& \text{ otherwise }
\end{array}
\right.
\\
&
\nonumber 
\text{ and }
\omega_2\in\left\{
\begin{array}{cc}
\edge\setminus\{\omega_2\}
&\text{ if }\omega_2\in \p\\
\edge
& \text{ otherwise },
\end{array}
\right.
\end{align}
and $\omega\in \edgepart_n$ ($n\geq 2$) has the form
\begin{align}\label{eq foem edges 2}
    \omega=\omega_1i^{n}\omega_{n+2} \text{ for some } i\in\p\ \text{and }\omega_1,\omega_{n+2}\in\edge\setminus\{i\}.
\end{align}
Also, notice that for all $n\in\mathbb{N}$ and $\omega\in \edgepart_n$ we have
\begin{align}\label{eq markov}
    \indumap(I_\omega)=f^{n}(I_\omega)=I_{\omega_{n+1}\omega_{n+2}}.
\end{align}
Therefore, 
if $\indumap(I_\omega)\cap I_{\omega'}$ ($\omega, \omega'\in\edgesindu$) has non-empty interior then $I_{\omega'}\subset \indumap(I_\omega)$.
This implies that
$\indumap:\{\return<\infty\}\rightarrow \inducingdomain$ is a Markov map with the countable Markov partition $\{I_\omega\}_{\omega\in \edgesindu}$.
We define the incidence matrix $B:\edgesindu\times \edgesindu\rightarrow\{0,1\}$
by $B_{\omega,\omega'}=1$ if $I_{\omega'}\subset \indumap(I_{\omega})$ and
$B_{\omega,\omega'}=0$ otherwise. Define the coding space 
\[
\inducoding:=\lbrace\omega\in \edgesindu^{\mathbb{N}}:B_{\omega_{n},\omega_{n+1}}=1,\ n\in\mathbb{N}\rbrace.
\]
We denote by $\inducoding^{n}$ $(n\in\mathbb{N})$ the set of all admissible
words of length $n$ with respect to the incidence matrix $B$ and
by $\inducoding^{*}$ the set of all admissible words which have a finite length
(i.e. $\inducoding^*=\cup_{n\in\mathbb{N}}\inducoding^{n}$). 
For $\omega\in \inducoding^{n}$ ($n\in\mathbb{N}$) we define the cylinder set of $\omega$ by
$[\omega]:=\{\tau\in\inducoding:\tau_{i}=\omega_{i},1\leq i\leq n\}$.
We endow $\inducoding$
with the metric $d$ defined by $d(\omega,\omega')={e^{-k}}$ $\text{if}\ \omega_{i}=\omega_{i}'\ \text{for\ all}\ i=1,\cdots,k-1\ \text{and}\ \omega_{k}\neq\omega_{k}'$
and $d(\omega,\omega')=0$ otherwise.   Let $\shift:\inducoding\rightarrow\inducoding$
be the left-shift map. 
By \eqref{eq uniformly decay of sylinders}, 
for each $\omega\in \inducoding$ the set 
$\bigcap_{n\in\mathbb{N}}I_{\omega_1\cdots\omega_n}$
is a singleton. Thus, we can define the coding map $\codingmap:\inducoding\rightarrow \codingmap(\inducoding)$
by 
\[
\{\codingmap(\omega)\}=\bigcap_{n\in\mathbb{N}}I_{\omega_1\cdots\omega_n} \text{ and set }\indulimit=\codingmap(\inducoding).
\]
Then, we have $\indumap(\indulimit)=\indulimit$. 
We denote by $M(\indumap)$ the set of $\indumap$-invariant Borel probability measures on $\indulimit$ and by $M(\shift)$ the set of $\shift$-invariant Borel probability measures on $\inducoding$.
\begin{rem}\label{rem measurable bijection}
    We notice that $\codingmap$ is continuous and one-to-one except on the preimage of the countable set $J_0:=\bigcup_{n=0}^\infty \indumap^{-n}(\bigcup_{\omega\in \edgesindu}\partial \bar\Delta_{\omega})$, where it is at most two-to-one. Furthermore, we have $\indumap\circ\codingmap=\codingmap\circ\shift$ on $\inducoding\setminus\codingmap^{-1}(J_0)$ and the restriction of $\codingmap$ to $\inducoding\setminus\codingmap^{-1}(J_0)$ has a continuous inverse.  Thus, $\codingmap$ induces a measurable bijection between $\inducoding\setminus\codingmap^{-1}(J_0)$ and $\indulimit\setminus J_0$. Furthermore, by applying the argument in the proof of \cite[Lemma 3.5]{jaerisch2022multifractal}, for any $\measure\in M(\indumap)$ there exists $\codingmeasure\in M(\shift)$ such that $\measure=\codingmeasure\circ\codingmap^{-1}$ and $h(\measure)=h(\codingmeasure)$, and for $\codingmeasure\in M(\shift)$ we have $\codingmeasure\circ\codingmap^{-1}\in M(\indumap)$ and  $h(\codingmeasure\circ\codingmap^{-1})=h(\codingmeasure)$. 
\end{rem}

We will recall results from the thermodynamic formalism for ($\inducoding,\shift$).
For a function $\tilde\psi:\inducoding\rightarrow \mathbb{R}$ and $n\in\mathbb{N}$ we set 
$S_n(\tilde\psi):=\sum_{k=0}^{n-1} \tilde\psi\circ\shift^k$.
 For a continuous function $\tilde\psi$ on $\inducoding$ the topological pressure of $\tilde\psi$ introduced by Mauldin and Urba\'nski \cite{mauldin2003graph} is given as
\[
\tilde{P}(\tilde\psi):=\lim_{n\to\infty}\frac{1}{n}\log\sum_{\omega\in \inducoding^n}\exp\left(
\sup_{\tau\in[\omega]} 
S_n(\tilde\psi)(\tau)
\right).
\]
Let $\phi:\Lambda\rightarrow\mathbb{R}$ be a continuous function and let $\indupot$ be the induced potential defined by \eqref{eq def indu pot}. For some $\beta>0$ the induced potential $\indupot$ is said to be locally H\"older with exponent $\beta$ if  
\begin{align}\label{def locally holder}
    \sup_{n\in\mathbb{N}}\sup_{\omega\in \inducoding^n}\sup\{|\indupot\circ\codingmap(\tau)-\indupot\circ\codingmap(\tau')|(d(\tau,\tau'))^{-\beta}:\tau,\tau'\in [\omega]\}<\infty.
\end{align}
Since $\indumap$ is uniformly expanding, that is, there exists $c>1$ such that for all $x\in \indulimit$ we have $|\indumap'(x)|>c$, and $\indumap$ satisfies the Renyi condition
(F2), there exists $\beta>0$ such that $\log|\indumap'|$ is locally H\"older with exponent $\beta$. In the following, for $\phi\in \mathcal{R}$ we assume that, if necessary by replacing the exponent $\beta$ with a smaller number, $\log|\indumap'|$ and $\indupote$ are locally H\"older with $\beta$. Also, note that, since for each $\omega\in \edgesindu$, $\return\circ\codingmap$ is constant on $[\omega]$, the return time function $\return$ is  locally H\"older with exponent $\beta$. 
By \eqref{eq form edges 1}, \eqref{eq foem edges 2} and \eqref{eq markov}, it is not difficult to verify that $\inducoding$ is finitely primitive, that is, there exist $n\in\mathbb{N}$ and a finite set $\Omega\subset \inducoding^n$ such that for all $\tau,\tau'\in \edgesindu $ there is $\omega=\omega(\tau,\tau')\in \Omega$ for which $\tau\omega\tau'\in \inducoding^*$. These conditions enable us to apply results from thermodynamic formalism for the countable Markov shift ($\inducoding,\shift$) as  presented in \cite[Section 2]{mauldin2003graph} (see also \cite[Section 17, 18 and 20]{Urbanskinoninvertible}).

Let $\phi\in \mathcal{R}$. Define the pressure function $p:\mathbb{R}^3\rightarrow \mathbb{R}$ by 
\[
\indupressure(b,q,s):=\tilde{P}(-q\indupote\circ\codingmap-b\log|\indumap'\circ\codingmap|-s\return\circ\codingmap).
\]
\begin{thm}{\cite[Theorem 2.1.8]{mauldin2003graph}}\label{thm variational principle induce} For all $(b,q,s)\in \mathbb{R}^3$ we have
\[
\indupressure(b,q,s)=\sup_{\codingmeasure}\left\{h(\codingmeasure)+
\codingmeasure\left(\left(-q\indupote-b\log|\indumap'|-s\return\right)\circ\codingmap\right) \right\},
\]
where the supremum is taken over the set of measures $\codingmeasure\in M(\shift)$ satisfying $\codingmeasure((-q\indupote-b\log|\indumap'|-s\return)\circ\codingmap)>-\infty$. 
\end{thm}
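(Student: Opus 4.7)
Set $\tilde\psi:=(-q\indupote-b\log|\indumap'|-s\return)\circ\codingmap$. By the hypotheses (uniform expansion of $\indumap$, the Renyi condition (F2), and $\phi\in\mathcal{R}$), the function $\tilde\psi$ is locally Hölder on $\inducoding$; consequently $S_n\tilde\psi$ has bounded distortion along cylinders, i.e.\ there exists $K>0$ with $|S_n\tilde\psi(\tau)-S_n\tilde\psi(\tau')|\le K$ whenever $\tau,\tau'\in[\omega]$ and $\omega\in\inducoding^n$. My plan is to prove the two inequalities separately, following the Mauldin--Urbański strategy for finitely primitive countable Markov shifts.

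For the inequality ``$\le$'' I would exhaust $\inducoding$ by irreducible subshifts of finite type over finite sub-alphabets $F_k\subset\edgesindu$, noting that the finite primitivity of $\inducoding$ (which follows from \eqref{eq form edges 1}--\eqref{eq markov}) ensures such an exhaustion can be chosen with matching transition structure. On each compact shift $\inducoding^{(k)}:=\{\omega\in\inducoding:\omega_i\in F_k\text{ for all }i\}$ the locally Hölder potential $\tilde\psi$ admits an equilibrium state $\codingmeasure_k$ by the classical Bowen--Ruelle variational principle, so $h(\codingmeasure_k)+\codingmeasure_k(\tilde\psi)=\tilde P(\tilde\psi|_{\inducoding^{(k)}})$. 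By the sum-formula definition of $\tilde P$ and monotonicity, $\tilde P(\tilde\psi|_{\inducoding^{(k)}})\uparrow\indupressure(b,q,s)$ as $k\to\infty$, and each $\codingmeasure_k$ extends trivially to an element of $M(\shift)$, so the supremum in the statement is at least $\indupressure(b,q,s)$.

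For the inequality ``$\ge$'' I fix $\codingmeasure\in M(\shift)$ with $\codingmeasure(\tilde\psi)>-\infty$ and argue in the spirit of Misiurewicz. Since the cylinder partition is countable and generating for $\shift$, Shannon--McMillan--Breiman (applied via a truncation to subpartitions of finite entropy) gives $-\tfrac{1}{n}\log\codingmeasure([\tau_1\cdots\tau_n])\to h(\codingmeasure)$ almost surely, while Birkhoff's ergodic theorem gives $\tfrac{1}{n}S_n\tilde\psi(\tau)\to\codingmeasure(\tilde\psi)$ almost surely. Combining these with the bounded-distortion estimate, for any $\varepsilon>0$ one finds $n$ and a subset $E_n\subset\inducoding^n$ of total $\codingmeasure$-mass close to $1$ on which $\exp\sup_{[\omega]}S_n\tilde\psi\ge e^{n(\codingmeasure(\tilde\psi)-\varepsilon)}$ and $\codingmeasure([\omega])\le e^{-n(h(\codingmeasure)-\varepsilon)}$ simultaneously; summing the first estimate against the lower bound on $\#E_n$ forced by the second yields
\[
\sum_{\omega\in\inducoding^n}\exp\Big(\sup_{[\omega]}S_n\tilde\psi\Big)\ge e^{n(h(\codingmeasure)+\codingmeasure(\tilde\psi)-3\varepsilon)},
\]
and letting $\varepsilon\to 0$ closes the argument.

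The main obstacle is the non-compactness of $\inducoding$ together with the unboundedness of $\tilde\psi$ (caused by the unboundedness of $\return$). This forces a careful truncation in the ``$\ge$'' direction to guarantee the uniformity needed for term-by-term summation, and it forces a verification in the ``$\le$'' direction that no pressure is lost when exhausting by finite sub-alphabets---which is precisely what the finite primitivity of $\inducoding$ buys.
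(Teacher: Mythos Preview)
The paper does not supply its own proof of this statement; it is quoted directly as \cite[Theorem 2.1.8]{mauldin2003graph} and used as a black box, so there is no in-paper argument to compare against. Your sketch is essentially the Mauldin--Urba\'nski proof that the citation points to (exhaustion by finitely-primitive subshifts of finite type for one inequality, a Misiurewicz-type counting argument via Shannon--McMillan--Breiman and Birkhoff for the other), and it is correct at the level of a plan; the only point worth tightening is that in the ``$\ge$'' step you should first reduce to ergodic $\codingmeasure$ (by affinity of $h$ and the integral) before invoking the a.e.\ convergence theorems.
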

Let 
\[
Fin:=\{(b,q,s)\in \mathbb{R}^3:\indupressure(b,q,s)<\infty\}.
\]
\begin{prop}{\cite[Proposition 2.1.9]{mauldin2003graph}}\label{prop finite pressure}
 $(b,q,s)\in Fin$ if and only if 
 \[
 \sum_{\omega\in \edgesindu}\exp\left(\sup_{\tau\in [\omega]}\left\{\left(-q\indupote-b\log|\indumap'|-s\return\right)(\codingmap(\tau))\right\}\right)<\infty.
 \]
\end{prop}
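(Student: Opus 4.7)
The plan is to prove the equivalence by identifying $(b,q,s)\in Fin$ with the finiteness of the partition sum at level one. Set $\tilde\psi:=(-q\indupote-b\log|\indumap'|-s\return)\circ\codingmap$ and, for each $n\ge 1$,
\[
Z_n(\tilde\psi):=\sum_{\omega\in\inducoding^n}\exp\Bigl(\sup_{\tau\in[\omega]}S_n\tilde\psi(\tau)\Bigr),
\]
so that $\indupressure(b,q,s)=\lim_{n\to\infty}\tfrac{1}{n}\log Z_n(\tilde\psi)$ and the series in the statement is precisely $Z_1(\tilde\psi)$. The central observation is bounded distortion: since $\indupote\circ\codingmap$, $\log|\indumap'|\circ\codingmap$ and $\return\circ\codingmap$ are all locally H\"older with a common exponent $\beta>0$, a geometric-series estimate yields a constant $V>0$ such that $|S_n\tilde\psi(\tau)-S_n\tilde\psi(\tau')|\le V$ whenever $\omega\in\inducoding^n$ and $\tau,\tau'\in[\omega]$.

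For the implication $Z_1(\tilde\psi)<\infty\Rightarrow\indupressure(b,q,s)<\infty$, write $\omega=\omega_1\cdots\omega_n\in\inducoding^n$ and note that $\shift^{k-1}\tau\in[\omega_k]$ for every $\tau\in[\omega]$, so that
\[
\sup_{\tau\in[\omega]}S_n\tilde\psi(\tau)\le\sum_{k=1}^n\sup_{\tau'\in[\omega_k]}\tilde\psi(\tau').
\]
Exponentiating, summing over $\omega\in\inducoding^n$ and relaxing to unrestricted sums over $\edgesindu^n$ gives $Z_n(\tilde\psi)\le Z_1(\tilde\psi)^n$, whence $\indupressure(b,q,s)\le\log Z_1(\tilde\psi)<\infty$.

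For the converse I argue by contrapositive, using the finite primitivity of $\inducoding$ recorded earlier. Fix $n_0\in\mathbb{N}$ and a finite $\Omega\subset\inducoding^{n_0}$ such that for every $\tau,\tau'\in\edgesindu$ there is $\eta(\tau,\tau')\in\Omega$ making $\tau\eta(\tau,\tau')\tau'$ admissible, and set $M:=\min_{\eta\in\Omega}\inf_{\tau\in[\eta]}S_{n_0}\tilde\psi(\tau)\in\mathbb{R}$, which is finite since $\Omega$ is finite and $\tilde\psi$ has uniformly bounded variation on any cylinder of its length. Given $k\ge 1$ and $\omega^{(1)},\dots,\omega^{(k)}\in\edgesindu$, the concatenation $u:=\omega^{(1)}\eta_1\omega^{(2)}\cdots\eta_{k-1}\omega^{(k)}$ with $\eta_i:=\eta(\omega^{(i)},\omega^{(i+1)})$ is admissible of length $N:=k+(k-1)n_0$. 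Splitting $S_N\tilde\psi(\tau)$ for any $\tau\in[u]$ into the $k$ contributions at the $\omega^{(i)}$-positions and the $k-1$ connector blocks of length $n_0$, and combining the one-symbol variation bound with the definition of $M$, I obtain
\[
\sup_{\tau\in[u]}S_N\tilde\psi(\tau)\ge\sum_{i=1}^k\sup_{\tau\in[\omega^{(i)}]}\tilde\psi(\tau)+(k-1)M-kV.
\]
The assignment $(\omega^{(1)},\dots,\omega^{(k)})\mapsto u$ is injective (each $\omega^{(i)}$ sits at a fixed position in $u$), so summing over all choices yields $Z_N(\tilde\psi)\ge e^{(k-1)M-kV}Z_1(\tilde\psi)^k$. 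If $Z_1(\tilde\psi)=\infty$, then $Z_N(\tilde\psi)=\infty$ along the subsequence $N=k+(k-1)n_0$, forcing $\indupressure(b,q,s)=\infty$ and completing the contrapositive.

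The main obstacle is the bookkeeping in the converse direction: one must thread the connector words $\eta_i$ into $u$ in a manner that is simultaneously admissible, injective on the parameterization, and free of any accumulating error beyond the $kV$ already accounted for. This is exactly where the summable-variation form of the locally H\"older hypothesis (H) is essential, since it permits the constant $V$ to be chosen independently of the word length; the remaining estimates are routine decompositions of Birkhoff sums and geometric-sum manipulations.
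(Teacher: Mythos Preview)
Your argument is correct and is essentially the standard proof of this fact (as in Mauldin--Urba\'nski, \cite[Proposition~2.1.9]{mauldin2003graph}); note that the paper itself does not prove the proposition but simply quotes it from that reference. The only point worth tightening is the final clause: from $Z_N(\tilde\psi)=\infty$ along the subsequence $N=k+(k-1)n_0$ you conclude $\indupressure(b,q,s)=\infty$, which implicitly uses that the limit defining $\tilde P$ exists in $(-\infty,\infty]$; this is part of the Mauldin--Urba\'nski framework, but you could make the argument self-contained by observing that for every $n\ge 1+n_0$ one may append to each $\omega\in\edgesindu$ a connector from $\Omega$ followed by a fixed admissible tail of length $n-1-n_0$, giving $Z_n(\tilde\psi)\ge c\,Z_1(\tilde\psi)$ for a constant $c>0$ and hence $Z_n(\tilde\psi)=\infty$ for all large $n$.
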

For $(b,q,s)\in Fin$ a measure $\codingmeasure\in M(\shift)$  is called a Gibbs measure for $-q\indupote\circ\codingmap-b\log|\indumap'\circ\codingmap|-s\return\circ\codingmap$ if there exists a constant $Q\geq 1$ such that for every $n\in\mathbb{N}$, $\omega\in \inducoding^n$ and $\tau\in[\omega]$ we have
\begin{align}\label{eq gibbs}
\frac{1}{Q}\leq \frac{\codingmeasure([\omega])}{\exp(S_n((-q\indupote-b\log|\indumap'|-s\return)\circ\codingmap)(\tau)-\indupressure(b,q,s)n)}\leq Q    
\end{align}
\begin{thm}\cite[Theorem 2.2.4 and Corollary 2.7.5]{mauldin2003graph}\label{thm Gibbs induce}
    For $(b,q,s)\in Fin$ there exists a unique Gibbs measure $\codingmeasure_{b,q,s}\in M(\shift)$ for $-q\indupote\circ\codingmap-b\log|\indumap'\circ\codingmap|-s\return\circ\codingmap$. Moreover, $\codingmeasure_{b,q,s}$ is ergodic.
\end{thm}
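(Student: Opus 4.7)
The plan is to follow the Ruelle--Perron--Frobenius machinery adapted to the countable-alphabet setting of Mauldin--Urba\'nski, since standard Bowen-type arguments don't immediately work: the potential $\psi := -q\indupote\circ\codingmap-b\log|\indumap'\circ\codingmap|-s\return\circ\codingmap$ can be unbounded below because $\return$ is unbounded on $\inducoding$. By the hypothesis $\phi\in\mathcal{R}$ together with the local H\"olderness of $\log|\indumap'|$ and $\return$ discussed before Theorem \ref{thm variational principle induce}, the potential $\psi$ is itself locally H\"older with some exponent $\beta>0$, so the telescoping bound $|S_n\psi(\tau)-S_n\psi(\tau')|\le C$ for $\tau,\tau'\in[\omega]$, $\omega\in\inducoding^n$, is available.

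First I would introduce the transfer operator
\[
(\mathcal{L}_\psi h)(\omega):=\sum_{\shift\tau=\omega}e^{\psi(\tau)}h(\tau),
\]
observing that Proposition \ref{prop finite pressure} is exactly the one-step summability needed for $\mathcal{L}_\psi$ to act on bounded continuous functions on $\inducoding$. Combining this with the distortion estimate gives two-sided bounds
\[
\mathcal{L}_\psi^n\mathbf{1}_{[\omega]}(\cdot)\asymp e^{S_n\psi(\tau)}\qquad(\tau\in[\omega],\ \omega\in\inducoding^n),
\]
where the implied constants are uniform in $n$ and $\omega$. Using finite primitivity of $(\inducoding,\shift)$ via the connecting set $\Omega$ mentioned in the excerpt, a standard sub/supermultiplicativity argument then upgrades this to $e^{-n\indupressure(b,q,s)}\mathcal{L}_\psi^n\mathbf{1}$ being uniformly bounded above and below on every cylinder.

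Next, a Schauder--Tychonoff fixed-point argument applied to an appropriate cone of Borel probability measures on $\inducoding$ produces a conformal measure $\nu$ with $\mathcal{L}_\psi^*\nu=e^{\indupressure(b,q,s)}\nu$, while equicontinuity of $e^{-n\indupressure(b,q,s)}\mathcal{L}_\psi^n\mathbf{1}$ on each cylinder yields, via a diagonal extraction, a strictly positive locally H\"older eigenfunction $h$ satisfying $\mathcal{L}_\psi h=e^{\indupressure(b,q,s)}h$. The normalized measure $\codingmeasure_{b,q,s}:=h\,d\nu/\int h\,d\nu$ is then $\shift$-invariant, and the uniform two-sided bounds translate directly into \eqref{eq gibbs}.

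For uniqueness and ergodicity I would invoke finite primitivity once more: it makes $(\inducoding,\shift)$ topologically mixing, which combined with the Gibbs inequality yields decay of correlations on cylinders in the standard way, hence ergodicity of $\codingmeasure_{b,q,s}$. Any second shift-invariant Gibbs measure $\codingmeasure'$ for the same potential would be absolutely continuous with respect to $\codingmeasure_{b,q,s}$ with density bounded above and below by \eqref{eq gibbs}, and the invariance together with ergodicity of $\codingmeasure_{b,q,s}$ forces this density to be constant. The main obstacle is controlling the unboundedness of $\psi$ from below; the resolution is precisely the combination of one-step summability ($(b,q,s)\in Fin$), local H\"olderness, and finite primitivity, which together allow the finite-alphabet Perron--Frobenius arguments to be ported to this infinite-alphabet setting.
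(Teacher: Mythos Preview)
The paper does not give its own proof of this statement: it is quoted verbatim as \cite[Theorem 2.2.4 and Corollary 2.7.5]{mauldin2003graph} and used as a black box. Your sketch is a faithful outline of precisely the Ruelle--Perron--Frobenius argument carried out in that reference, relying on the same three ingredients the paper has already verified (local H\"older regularity of $\psi$, the summability condition from Proposition~\ref{prop finite pressure}, and finite primitivity of $\inducoding$), so there is nothing to compare and no gap to flag.
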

In the following, for $(b,q,s)\in Fin$ we denote by $\codingmeasure_{b,q,s}$ the unique $\shift$-invariant Gibbs measure obtained in Theorem \ref{thm Gibbs induce}.
For $(b,q,s)\in \mathbb{R}^3$ we say that $\codingmeasure\in M(\shift)$ is an equilibrium measure for $-q\indupote\circ\codingmap-b\log|\indumap'\circ\codingmap|-s\return\circ\codingmap$ if we have $\codingmeasure((-q\indupote-b\log|\indumap'|-s\return)\circ\codingmap)>-\infty$ and $\indupressure(b,q,s)=h(\codingmeasure)+\codingmeasure((-q\indupote-b\log|\indumap'|-s\return)\circ\codingmap)$.
\begin{thm}{\cite[Theorem 2.2.9]{mauldin2003graph}}\label{thm equilibrium state induce}
  Let  $(b,q,s)\in Fin$. If $\codingmeasure_{b,q,s}((-q\indupote-b\log|\indumap'|-s\return)\circ\codingmap)>-\infty$ then $\codingmeasure_{b,q,s}$ is the unique equilibrium measure for $-q\indupote\circ\codingmap-b\log|\indumap'\circ\codingmap|-s\return\circ\codingmap$.
\end{thm}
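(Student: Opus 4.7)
The proof proposal has two parts: first show that $\codingmeasure_{b,q,s}$ realizes the supremum in the variational principle of Theorem \ref{thm variational principle induce}, then derive uniqueness from ergodicity (Theorem \ref{thm Gibbs induce}). Throughout, write $\psi := (-q\indupote - b\log|\indumap'| - s\return)\circ\codingmap$ and let $\xi_1 := \{[\omega] : \omega \in \edgesindu\}$ be the countable partition by length-one cylinders. Two facts are used repeatedly: (a) $\xi_1$ generates the Borel $\sigma$-algebra under $\shift$, and (b) because $\indupote$ and $\log|\indumap'|$ are locally H\"older and $\return\circ\codingmap$ is constant on each $[\omega]\in\edgesindu$, the oscillation of $S_n\psi$ on any $[\omega]\in \inducoding^n$ is uniformly bounded in $n$.

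For the equilibrium property, I would apply the Gibbs bound \eqref{eq gibbs} with $n=1$ to get $-\log \codingmeasure_{b,q,s}([\omega]) \leq \log Q - \psi(\tau_\omega) + \indupressure(b,q,s)$, and summing against $\codingmeasure_{b,q,s}$ together with the hypothesis $\codingmeasure_{b,q,s}(\psi) > -\infty$ yields $H_{\codingmeasure_{b,q,s}}(\xi_1) < \infty$. Applying \eqref{eq gibbs} at level $n$ in the same way and using $\shift$-invariance gives
\[
H_{\codingmeasure_{b,q,s}}(\xi_n) \geq n\bigl(\indupressure(b,q,s) - \codingmeasure_{b,q,s}(\psi)\bigr) - O(1).
\]
Dividing by $n$, letting $n\to\infty$, and invoking the Kolmogorov-Sinai theorem (justified by (a) and the finiteness of $H(\xi_1)$) yields $h(\codingmeasure_{b,q,s}) + \codingmeasure_{b,q,s}(\psi) \geq \indupressure(b,q,s)$. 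The reverse inequality is immediate from Theorem \ref{thm variational principle induce}, so $\codingmeasure_{b,q,s}$ is an equilibrium measure.

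For uniqueness, let $\nu\in M(\shift)$ be another equilibrium measure. Using the ergodic decomposition $\nu = \int \nu_\omega\, d\kappa(\omega)$ together with the affineness of entropy and of $\nu\mapsto\nu(\psi)$ on the class of measures with integrable potential, one shows that $\kappa$-almost every ergodic component $\nu_\omega$ is itself an equilibrium measure, reducing the problem to the case $\nu$ ergodic. Then, applying the elementary inequality $-a\log a + a\log b \le b-a$ termwise over $\xi_n$ and using the Gibbs bound on $\codingmeasure_{b,q,s}$, one derives
\[
H_\nu(\xi_n) + \nu(S_n\psi) - n\indupressure(b,q,s) \leq \sum_{\omega\in\inducoding^n}\bigl(\codingmeasure_{b,q,s}([\omega]) - \nu([\omega])\bigr) + O(1),
\]
which, combined with the equilibrium identity for $\nu$ and an $L^1$-limit argument, forces $\nu \ll \codingmeasure_{b,q,s}$. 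Since $\codingmeasure_{b,q,s}$ is ergodic and $\nu$ is $\shift$-invariant with $\nu \ll \codingmeasure_{b,q,s}$, we conclude $\nu = \codingmeasure_{b,q,s}$.

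The main obstacle is the countable alphabet. Unlike in the finite-alphabet case, neither $H_{\codingmeasure_{b,q,s}}(\xi_1) < \infty$ nor integrability of $\psi$ along ergodic components is automatic; both rely crucially on the hypothesis $\codingmeasure_{b,q,s}(\psi) > -\infty$, which is transferred to the entropy side via the Gibbs estimate and to the ergodic decomposition via convexity of the pressure function. A secondary delicate point is ensuring that the tail contributions in the cylinder-wise comparison estimate vanish in the Ces\`aro limit, which requires the uniform H\"older control of $S_n\psi$ noted in (b).
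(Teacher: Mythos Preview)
The paper does not prove this theorem; it is quoted verbatim from Mauldin--Urba\'nski \cite[Theorem~2.2.9]{mauldin2003graph} and used as a black box. So there is no in-paper argument to compare against. Your sketch follows the standard Bowen-type route that Mauldin--Urba\'nski themselves use, and the equilibrium half is fine: the Gibbs bound at level one together with the hypothesis $\codingmeasure_{b,q,s}(\psi)>-\infty$ gives $H_{\codingmeasure_{b,q,s}}(\xi_1)<\infty$, and then the Kolmogorov--Sinai argument goes through as you wrote.

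The uniqueness half has the right ingredients but the crucial step is not quite what you wrote. In your displayed inequality the sum $\sum_{\omega}(\codingmeasure_{b,q,s}([\omega])-\nu([\omega]))$ is identically zero, so the estimate reduces to $H_\nu(\xi_n)+\nu(S_n\psi)-n\indupressure(b,q,s)\le O(1)$; by itself this is just the variational upper bound and does not yield absolute continuity via any ``$L^1$-limit''. What the inequality $-a\log a + a\log b\le b-a$ actually buys you, once combined with the Gibbs estimate, is a \emph{uniform bound on the level-$n$ relative entropy}
\[
D_n(\nu\,\|\,\codingmeasure_{b,q,s})\;=\;\sum_{\omega\in\inducoding^n}\nu([\omega])\log\frac{\nu([\omega])}{\codingmeasure_{b,q,s}([\omega])}\;\le\;C
\]
for all $n$ (here you also need $H_\nu(\xi_1)<\infty$, which follows from the same Gibbs comparison applied to $\nu$ and the assumption $\nu(\psi)>-\infty$, so that $H_\nu(\xi_n)\ge nh(\nu)$). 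Since $\xi_1$ generates, $D_n$ is non-decreasing and converges to the full relative entropy; finiteness of the latter gives $\nu\ll\codingmeasure_{b,q,s}$, and ergodicity of $\codingmeasure_{b,q,s}$ finishes the argument. You should also note that the reduction to ergodic $\nu$ requires checking that a.e.\ ergodic component still has $\nu_\omega(\psi)>-\infty$, which holds because $\psi$ is bounded above on cylinders and $\nu(\psi)>-\infty$.
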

For $\codingmeasure\in M(\shift)$ we define $\lambda(\codingmeasure)=\codingmeasure( \log|\indumap'\circ\codingmap|)$
\begin{thm}{\cite[Theorem 2.6.12 and Proposition 2.6.13]{mauldin2003graph} (see also \cite[Theorem 20.1.12]{Urbanskinoninvertible})} \label{thm regularity of induced pressure}The function $(b,q,s)\mapsto\indupressure(b,q,s)$ is real-analytic on $\text{Int}(Fin)$. Moreover, we have Ruell's formula, that is, for $(b,q,s)\in \text{Int}(Fin)$,
$
\frac{\partial}{\partial b}\indupressure(b,q,s)=-\lambda(\codingmeasure_{b,q,s}),
$
$
\frac{\partial}{\partial q}\indupressure(b,q,s)=-\codingmeasure(\indupote\circ\codingmap),
$
$
\frac{\partial}{\partial s}\indupressure(b,q,s)=-\codingmeasure(\return\circ\codingmap).
$
\end{thm}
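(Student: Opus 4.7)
The plan is to derive both claims from Ruelle transfer operator theory for the countable Markov shift $(\inducoding,\shift)$. For $(b,q,s)\in Fin$, define
\[
(\mathcal{L}_{b,q,s}g)(\tau):=\sum_{\omega\in\edgesindu:\,\omega\tau\in\inducoding^{*}}\exp\bigl((-q\indupote-b\log|\indumap'|-s\return)(\codingmap(\omega\tau))\bigr)g(\omega\tau)
\]
on the Banach space of bounded locally $\beta$-Hölder functions on $\inducoding$. Proposition \ref{prop finite pressure} says that $\mathcal{L}_{b,q,s}$ is bounded exactly when $(b,q,s)\in Fin$; the finite primitivity of $\inducoding$ and the locally Hölder hypotheses on $\indupote$, $\log|\indumap'|$ and $\return$ then yield a spectral gap, a simple isolated leading eigenvalue $\lambda(b,q,s)$, a strictly positive Hölder eigenfunction $h_{b,q,s}$, and a conformal eigenmeasure $\nu_{b,q,s}$. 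Standard identifications give $\log\lambda(b,q,s)=\indupressure(b,q,s)$ and $\codingmeasure_{b,q,s}=h_{b,q,s}\,\nu_{b,q,s}$, tying the operator to the Gibbs measure of Theorem \ref{thm Gibbs induce}.

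The real-analyticity claim I would then obtain from Kato's analytic perturbation theory. The key intermediate step is that $(b,q,s)\mapsto \mathcal{L}_{b,q,s}$ is a real-analytic map from $\text{Int}(Fin)$ into the Banach algebra of bounded operators on the Hölder space: each summand depends entire-analytically on $(b,q,s)$, and on any compact $K\subset \text{Int}(Fin)$ one dominates the tails by a series of the form appearing in Proposition \ref{prop finite pressure}, evaluated at a parameter slightly outside $K$ but still inside $Fin$. Analytic perturbation for the isolated simple eigenvalue $\lambda(b,q,s)$ then lifts this to real-analyticity of $\lambda$, hence of $\indupressure=\log\lambda$, on $\text{Int}(Fin)$.

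For Ruelle's formula, I would differentiate the eigenvalue equation $\mathcal{L}_{b,q,s}h_{b,q,s}=\lambda(b,q,s)h_{b,q,s}$ in each variable, pair with $\nu_{b,q,s}$, and use the normalisation $\int h_{b,q,s}\,d\nu_{b,q,s}=1$ to cancel the derivative of $h_{b,q,s}$; what remains is precisely the integral of the corresponding component of the potential against $\codingmeasure_{b,q,s}$, for example $\partial_{b}\indupressure(b,q,s)=-\codingmeasure_{b,q,s}(\log|\indumap'\circ\codingmap|)=-\lambda(\codingmeasure_{b,q,s})$, and likewise for $\partial_q$ and $\partial_s$. Equivalently, Theorem \ref{thm variational principle induce} together with Theorem \ref{thm equilibrium state induce} presents $\indupressure$ as the upper envelope of affine functions of $(b,q,s)$ admitting a unique supporting hyperplane at each point of $\text{Int}(Fin)$, whose slopes are read off as the stated integrals. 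The main obstacle throughout is the non-compactness of $\inducoding$: boundedness and quasi-compactness of $\mathcal{L}_{b,q,s}$, and integrability of $\return\circ\codingmap$ against $\codingmeasure_{b,q,s}$, are genuinely parameter-dependent and hold on $\text{Int}(Fin)$ but fail at $\partial Fin$, which is precisely the phase-transition mechanism alluded to in the introduction.
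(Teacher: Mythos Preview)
The paper does not provide its own proof of this theorem: it is quoted verbatim from \cite[Theorem 2.6.12 and Proposition 2.6.13]{mauldin2003graph} (see also \cite[Theorem 20.1.12]{Urbanskinoninvertible}) and used as a black box. Your sketch via the Ruelle transfer operator, spectral gap, and Kato's analytic perturbation theory is precisely the argument carried out in those references, so your proposal is correct and aligned with the cited literature; there is simply nothing in the present paper to compare it against.
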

For $\codingmeasure\in M(\shift)$ we define $\measure:=\codingmeasure\circ\codingmap^{-1}$. Then, by Remark \ref{rem measurable bijection}, we have $\measure\in M(\indumap)$. For $\tilde \nu\in M(\indumap)$ with $\tilde\nu(\return)<\infty$ we define
\begin{align}\label{eq def lift}
\nu:=\frac{1}{\tilde\nu( \return )}\sum_{n=0}^{\infty}\sum_{k=n+1}^\infty\tilde\nu|_{\{\return=k\}}\circ f^{-n}.    
\end{align}
Since $\indumap$ is a first return map of $f$, it is well-known that for $\tilde\nu\in M(\indumap)$ with $\tilde\nu( \return )<\infty$ we have $\nu\in M(f)$ (for example see \cite[Proposition 1.4.3]{viana}). Also, if $\tilde\nu\in M(\indumap)$ with $\tilde\nu( \return )<\infty$ is ergodic then we have Abramov-Kac's formula  (see \cite[Theorem 2.3]{PesinSenti}):
\begin{align}\label{eq Abramov-Kac's formula}
    \tilde\nu( \return ) h(\nu)=h(\tilde \nu) \text{ and }\tilde\nu( \return )\nu(\psi)=\tilde\nu(\tilde\psi)
\end{align}
for a continuous function $\psi$ on $\Lambda$.  Conversely, for a ergodic measure $\nu\in M(f)$ with $\nu(\indulimit)>0$ and $\tilde \nu:=\nu|_{\indulimit}/\nu(\indulimit)$  we have 
\begin{align}\label{eq classical Abramov-Kac's formula}
    \tilde\nu( \return ) h(\nu)=h(\tilde \nu) \text{ and }\tilde\nu( \return )\nu(\psi)=\tilde\nu( \tilde\psi)
\end{align}
for a continuous function $\psi$ on $\Lambda$. 
Define, for a finite set $\mathcal{L}$ and $\{\nu_{\ell}\}_{\ell\in \mathcal{L}}\subset M(f)$,
$
\text{Conv}(\{\nu_{\ell}\}_{\ell\in \mathcal{L}}):
=
\{\sum_{\ell\in\mathcal{L}}p_\ell\nu_{\ell}:\{p_\ell\}_{\ell\in\mathcal{L}}\subset[0,1],\ \sum_{\ell\in\mathcal{L}}p_\ell=1
\}.$
\begin{lemma}\label{lemma equivalent condition not liftable}
    Let $\nu\in M(f)$. Then $\nu(\indulimit)=0$ if and only if $\nu\in \conv$, where $\delta_{x_i}$ ($i\in\p$) denotes the Dirac measure at $x_i$. 
\end{lemma}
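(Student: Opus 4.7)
My plan is to relate non-liftable $f$-invariant measures to orbits that eventually become trapped at a parabolic fixed point, and then use Poincar\'e recurrence to force such measures to be convex combinations of Dirac masses at the parabolic fixed points.

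For the direction ($\Rightarrow$), assume $\nu(\indulimit)=0$, so $\nu$-a.e.\ point lies in $\Lambda\setminus\indulimit$. The Poincar\'e recurrence theorem yields that $\nu$-a.e.\ $x$ is recurrent under $f$. The key combinatorial identity is $\Lambda\setminus\inducingdomain=\bigcup_{i\in\p}I_{ii}$; in particular any $y\in I_{ii}$ satisfies $f(y)\in\Delta_i$, because the first two $f$-coding symbols of $y$ are both $i$. Since $x\notin\indulimit$, its induced orbit cannot be continued indefinitely, so there exist $M\geq 0$ and $i\in\p$ with $f^n(x)\notin\inducingdomain$ for every $n\geq M$, and the previous identity then implies $f^n(x)\in\Delta_i$ for every $n\geq M$. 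Applying the mean value theorem to $f_i$ with the bound $|f_i'|>1$ on $W_i\setminus\{x_i\}$, the sequence $|f^n(x)-x_i|$ is strictly increasing whenever $f^n(x)\neq x_i$; combined with boundedness of $\bar\Delta_i$ and the uniqueness of the fixed point $x_i$ of $f_i$ in $W_i$, this forces $f^M(x)=x_i$. Hence $f^n(x)=x_i$ for every $n\geq M$, and recurrence of $x$ upgrades this to $x=x_i$. Therefore $\nu$-a.e.\ $x$ lies in the finite set $\{x_i\}_{i\in\p}$, and $\nu=\sum_{i\in\p}\nu(\{x_i\})\delta_{x_i}\in\conv$.

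For the direction ($\Leftarrow$), it suffices to show that $x_i\notin\indulimit$ for each $i\in\p$. If $x_i=\codingmap(\omega)$ for some $\omega\in\inducoding$, then $x_i\in \bar\Delta_{\omega_1}$ for the word $\omega_1\in\edgesindu$, and the identity $f(x_i)=x_i$ forces every coordinate of $\omega_1$ to be a letter $j$ with $x_i\in\bar\Delta_j$, i.e.\ $\omega_1=i\cdots i$. However, the admissibility forms \eqref{eq form edges 1} and \eqref{eq foem edges 2} of $\edgesindu$ forbid the constant word built only from the parabolic letter $i$ (the middle letter $i$ must be flanked by different letters), and this contradiction gives $x_i\notin\indulimit$.

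The main obstacle is the forward direction, specifically the step forcing $f^M(x)=x_i$ whenever $x\in\Lambda\setminus\indulimit$. The trapping $f^n(x)\in\Delta_i$ for $n\geq M$ is a clean combinatorial consequence of $\Lambda\setminus\inducingdomain=\bigcup_{i\in\p}I_{ii}$, but identifying the trapped orbit with $x_i$ relies on the parabolic expansion structure of $f_i$: the bound $|f_i'|>1$ off $x_i$ forces distances from $x_i$ to grow along orbits inside $\Delta_i$, while compactness of $\bar\Delta_i$ and the uniqueness of the fixed point of $f_i$ in $W_i$ prevent any nontrivial orbit from remaining in $\Delta_i$ forever. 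Once $f^M(x)=x_i$ is established, Poincar\'e recurrence delivers $x=x_i$, closing the argument.
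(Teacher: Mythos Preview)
The paper omits this proof as straightforward, so there is no authorial argument to compare against; I will comment only on the correctness of your approach.

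Your overall strategy is sound, but there is a genuine gap in the forward direction. You write: ``Since $x\notin\indulimit$, its induced orbit cannot be continued indefinitely, so there exist $M\geq 0$ and $i\in\p$ with $f^n(x)\notin\inducingdomain$ for every $n\geq M$.'' This inference is false pointwise. Since $\indulimit\subset\inducingdomain$, a point $x\in I_{ii}\setminus\inducingdomain$ automatically satisfies $x\notin\indulimit$, yet it may well happen that $f^2(x)\in\indulimit$ (for instance any $x$ whose $f$-coding begins $iij\cdots$ with $j\neq i$ and whose tail codes a point of $\indulimit$). For such $x$ the forward orbit visits $\inducingdomain$ infinitely often, so no trapping time $M$ exists. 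The missing ingredient is the $f$-invariance of $\nu$: from $\nu(\indulimit)=0$ one gets $\nu\big(\bigcup_{n\geq 0} f^{-n}(\indulimit)\big)=0$, so for $\nu$-a.e.\ $x$ the \emph{entire} forward orbit avoids $\indulimit$. Equivalently (and more directly), one first shows $\nu(\inducingdomain)=0$: if $\nu(\inducingdomain)>0$, Poincar\'e recurrence applied to $\inducingdomain$ gives that $\nu$-a.e.\ point of $\inducingdomain$ has infinite $\indumap$-orbit, hence (off the countable set $J_0$) lies in $\indulimit$, contradicting $\nu(\indulimit)=0$. Once $\nu(\inducingdomain)=0$ is in hand, $f$-invariance yields $f^n(x)\notin\inducingdomain$ for every $n\geq 0$ and $\nu$-a.e.\ $x$, and your expansion and recurrence argument then goes through cleanly.

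A smaller issue: in the backward direction you assert that $x_i\in\bar\Delta_j$ forces $j=i$. The hypotheses (NEI1)--(NEI3) only give disjointness of interiors, so in principle $x_i$ could sit on a shared boundary $\partial\Delta_i\cap\partial\Delta_j$; your coding argument does not exclude this. This edge case does not arise in the paper's examples and is harmless for the applications (which only use the forward implication), but the step ``i.e.\ $\omega_1=i\cdots i$'' is not fully justified as written.
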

The proof of Lemma \ref{lemma equivalent condition not liftable} is straightforward and is therefore omitted.

\section{thermodynamic formalism}\label{sec thermodynamic}
We denote by $\maps$ the set of non-uniformly expanding interval maps having the admissible induced map $\indumap$.  For a function $\tilde\psi$ on $\inducoding$ and $\omega\in \inducoding^*$ we set $\tilde \psi([\omega]):=\sup_{\tau\in\omega}\tilde\psi(\tau)$. 
In this section, we assume throughout that $f\in \maps$ and $\phi\in \mathcal{R}$.
Recall that $\alpha_i:=\phi(x_i)$ ($i\in\p$).
For $q\in \mathbb{R}$ we set 
\[
\LB:= \max_{i\in\p}\{-q\alpha_i\}.
\]

\begin{lemma}\label{lem finite pressure on a special parameter}
    For all $(b,q)\in\mathbb{R}^2$ and $s\in(\LB,\infty)$ we have $(b,q,s)\in Fin$. 
   In particular, for all $(b,q)\in\mathcal{N}$ we have $(b,q,p(b,q))\in Fin$. 
\end{lemma}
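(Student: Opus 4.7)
The plan is to apply Proposition \ref{prop finite pressure}, which reduces the problem to verifying that the series
\begin{equation*}
\Sigma(b,q,s):=\sum_{\omega\in\edgesindu}\exp\Bigl(\sup_{\tau\in[\omega]}\bigl\{(-q\indupote-b\log|\indumap'|-s\return)(\codingmap(\tau))\bigr\}\Bigr)
\end{equation*}
is finite. I will split the sum along the partition $\edgesindu=\bigsqcup_{n\geq 1}\edgepart_n$. The contribution from $\edgepart_1\subset\edge^3$ is clearly finite. For $n\geq 2$, using the combinatorial description \eqref{eq foem edges 2}, every $\omega\in\edgepart_n$ has the form $\omega=\omega_1 i^n\omega_{n+2}$ with $i\in\p$ and $\omega_1,\omega_{n+2}\in\edge\setminus\{i\}$, giving at most $(\#\edge)^2\,\#\p$ choices for each $n$.

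On $I_\omega$ with $\omega=\omega_1 i^n\omega_{n+2}$, I have $\return\equiv n$ by definition, and (F1) yields $\log|\indumap'|=(1+\exponent)\log n+O(1)$ uniformly. The central estimate to establish is
\begin{equation*}
\sup_{x\in I_\omega}\bigl|\indupote(x)-n\alpha_i\bigr|=o(n)\qquad(n\to\infty),
\end{equation*}
uniformly in the choice of $\omega_1,\omega_{n+2}$. I will obtain this by writing $\indupote(x)=\phi(x)+\sum_{k=1}^{n-1}\phi(f^k(x))$ and observing that for $x\in I_\omega$ and $k=1,\dots,n-1$ one has $f^k(x)\in\bar\Delta_{i^{n+1-k}\omega_{n+2}}\subset\bar\Delta_{i^{n+1-k}}$. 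Since $x_i\in\bar\Delta_{i^m}$ for every $m$ and $|\bar\Delta_{i^m}|\to 0$ by \eqref{eq uniformly decay of sylinders}, uniform continuity of $\phi$ furnishes, for any $\varepsilon>0$, some $M$ such that $|\phi(y)-\alpha_i|\leq\varepsilon$ whenever $y\in\bar\Delta_{i^m}$ with $m\geq M$. The at most $M$ indices $k$ with $n+1-k<M$ contribute $O(1)$, while the remainder contributes $(n-O(1))\alpha_i\pm\varepsilon n$, giving the asymptotic.

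Combining the three estimates, for sufficiently large $n$,
\begin{equation*}
\sup_{\tau\in[\omega]}\bigl\{(-q\indupote-b\log|\indumap'|-s\return)(\codingmap(\tau))\bigr\}\leq -n(s+q\alpha_i)+|q|\varepsilon n-b(1+\exponent)\log n+C,
\end{equation*}
with $C$ independent of $n$ and of the outer letters. Since $s>\LB$, the quantity $\eta:=\min_{i\in\p}(s+q\alpha_i)$ is strictly positive; choosing $\varepsilon<\eta/(2(|q|+1))$ bounds each term by a constant multiple of $n^{-b(1+\exponent)}e^{-\eta n/2}$. Summing over the finitely many triples $(\omega_1,i,\omega_{n+2})$ and over $n$ yields a convergent series, so $\Sigma(b,q,s)<\infty$ and Proposition \ref{prop finite pressure} gives $(b,q,s)\in Fin$. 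The second assertion is immediate: for $(b,q)\in\mathcal{N}$ one has $p(b,q)>\LB$ by the definition of $\mathcal{N}$, so the first part applies with $s=p(b,q)$. The main obstacle is the $o(n)$ approximation of $\indupote$ by $n\alpha_i$, which crucially exploits the long middle block $i^n$ in the structure of $\edgepart_n$ together with the shrinkage of the cylinders $\bar\Delta_{i^m}$ to the parabolic fixed point $x_i$.
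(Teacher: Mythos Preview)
Your proof is correct and follows essentially the same approach as the paper's: reduce via Proposition~\ref{prop finite pressure}, stratify the sum over $\edgesindu$ by the level sets $\edgepart_n$, use (F1) for the derivative term, and exploit continuity of $\phi$ along the long middle block $i^n$ to replace $\indupote$ by $n\alpha_i$ up to an $o(n)$ error, so that the choice $s>\LB$ forces exponential decay. The paper's write-up differs only in bookkeeping (it fixes a threshold $N$ with $|q\phi(x)-q\alpha_i|<\epsilon$ on $I_{i^n}$ for $n\geq N$ and splits the Birkhoff sum accordingly), but the underlying argument is the same.
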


\begin{proof}
Fix $(b,q)\in\mathbb{R}^2$ and $s\in( \LB,\infty)$.
We take a small $\epsilon>0$ with $ \LB+\epsilon<s$. 
Since $\phi$ is continuous on $\Lambda$, there exists $N\geq 2$ such that for all $i\in\p$, $n\geq N$ and $x\in I_{i^n}$ we have $|q\phi(x)-q\alpha_i|<\epsilon$. Thus, by (F1), we obtain
\begin{align*}
&\sum_{n=N}^{\infty}\sum_{\omega\in\edgepart_n}e^{\left(-q\indupote-b\log|\indumap'|- s \return\right)\circ\codingmap([\omega])}
\asymp
\sum_{n=N}^{\infty}
\sum_{\omega\in\edgepart_n}n^{-b(1+\exponent)}
e^{(-q\phi- s )\circ\codingmap([\omega])}
\\&\exp
\left(
\left(
\sum_{k=1}^{n-N+1}(-q\phi- s )\circ\codingmap\circ\shift^k
\right)
([\omega])
\right)
e^{\left(\sum_{k=n-N+2}^{n-1}(-q\phi- s )\circ\codingmap\circ\shift^k\right)([\omega])}
\\&
\asymp
\sum_{n=N}^{\infty}\sum_{\omega\in\edgepart_n}\frac{
e^{\left(
\sum_{k=1}^{n-N+1}(-q\phi- s )\circ\codingmap\circ\shift^k
\right)([\omega])}}{n^{b(1+\exponent)}}
\ll 
\sum_{n=N}^{\infty}\frac{
e^{( \LB+\epsilon- s )n
}}{n^{b(1+\exponent)}}<\infty.
\end{align*}
Therefore, by Proposition \ref{prop finite pressure}, we are done.
\end{proof}

For $(b,q)\in \mathcal{N}$ we write $\codingmeasure_{b,q}:=\codingmeasure_{b,q,\indupressure(b,q)}$.

\begin{lemma}\label{lemma finite return time}
    For all $(b,q)\in\mathcal{N}$ and $\ell\in\mathbb{N}$ we have $\codingmeasure_{b,q}(| \return\circ\codingmap|^\ell)<\infty$. Moreover, we have 
    $\codingmeasure_{b,q}( |\indupote\circ\codingmap|^{\ell})<\infty$ and $\codingmeasure_{b,q}(|\log|\indumap'||^\ell)<\infty$. In particular, $\codingmeasure_{b,q}$ is the unique equilibrium measure for $-q\indupote\circ\codingmap-b\log|\indumap'\circ\codingmap|-p(b,q)\return\circ\codingmap$.
\end{lemma}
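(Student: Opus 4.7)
The plan is to first establish the return-time moment bound $\codingmeasure_{b,q}((\return\circ\codingmap)^\ell)<\infty$ and then to derive the other two moment bounds from it, finishing with a direct application of Theorem~\ref{thm equilibrium state induce} for uniqueness. The entry point is that $(b,q)\in \mathcal N$ means $p(b,q)>\LB$, so Lemma~\ref{lem finite pressure on a special parameter} with $s=p(b,q)$ gives $(b,q,p(b,q))\in Fin$, and hence the Gibbs measure $\codingmeasure_{b,q}$ is well-defined by Theorem~\ref{thm Gibbs induce}.

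For the return-time bound I would apply the Gibbs inequality \eqref{eq gibbs} to $1$-cylinders in $\inducoding$. For $\omega\in\edgepart_n$ and any $\tau\in[\omega]$, combined with (F1), this yields
\[
\codingmeasure_{b,q}([\omega])\ll n^{-b(1+\exponent)}\exp\bigl(-q\indupote(\codingmap(\tau))-p(b,q)n\bigr).
\]
When $n\geq 2$, by \eqref{eq foem edges 2} the word $\omega$ has the form $\omega_1 i^n \omega_{n+2}$ with $i\in\p$, and the $f$-orbit $f^k(\codingmap(\tau))$ lies in $I_{i^{n-k}\omega_{n+2}}$ for $1\leq k\leq n-1$, hence is close to $x_i$ whenever both $k$ and $n-k$ are large. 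Combining this with continuity of $\phi$ and $\phi(x_i)=\alpha_i$, for any $\epsilon>0$ I can bound the Birkhoff sum $\indupote(\codingmap(\tau))=\sum_{k=0}^{n-1}\phi(f^k(\codingmap(\tau)))$ by $n\alpha_i+\epsilon n+O(1)$. This is essentially the estimate already used in the proof of Lemma~\ref{lem finite pressure on a special parameter}. Choosing $\epsilon$ so that $|q|\epsilon<(p(b,q)-\LB)/2$, I obtain $\codingmeasure_{b,q}([\omega])\ll n^{-b(1+\exponent)}e^{-cn}$ with $c>0$ uniform in $\omega\in\edgepart_n$. Since $\#\edgepart_n$ is bounded by a constant independent of $n$, summing $n^\ell$ against this bound gives $\codingmeasure_{b,q}((\return\circ\codingmap)^\ell)<\infty$ for every $\ell\in\mathbb N$, because exponential decay beats polynomial growth.

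The remaining moments come almost for free. By \eqref{eq def indu pot} and compactness of $\Lambda$, $|\indupote|\leq \|\phi\|_\infty \return$, so the bound on return-time moments implies $\codingmeasure_{b,q}(|\indupote\circ\codingmap|^\ell)<\infty$. For the derivative, (F1) together with uniform expansion of $\indumap$ gives $0<\log|\indumap'(\codingmap(\tau))|\leq (1+\exponent)\log n + O(1)$ for $\tau\in[\omega]$, $\omega\in\edgepart_n$, so $|\log|\indumap'\circ\codingmap||^\ell$ is dominated by $(\log\return\circ\codingmap)^\ell + 1$, which is integrable by the same return-time moment bound. With all three integrals finite, $\codingmeasure_{b,q}\bigl((-q\indupote-b\log|\indumap'|-p(b,q)\return)\circ\codingmap\bigr)>-\infty$, so Theorem~\ref{thm equilibrium state induce} applies and yields the claimed uniqueness.

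The main obstacle is the estimation of $\indupote(\codingmap(\tau))$ on a cylinder of type $\omega_1 i^n \omega_{n+2}$: one has to extract the leading $n\alpha_i$ contribution with a sublinear error that is small compared to the spectral gap $p(b,q)-\LB$. Once that is in place, the rest is a routine geometric-decay summation parallel to the proof of Lemma~\ref{lem finite pressure on a special parameter}, with the extra polynomial factor $n^\ell$ absorbed by the exponential decay.
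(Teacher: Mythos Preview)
Your proposal is correct and follows essentially the same approach as the paper: use the Gibbs inequality on $1$-cylinders, estimate $-q\indupote$ on $\edgepart_n$ via continuity of $\phi$ near the parabolic fixed points (exactly as in Lemma~\ref{lem finite pressure on a special parameter}) to produce exponential decay $e^{-cn}$ of $\codingmeasure_{b,q}([\omega])$, and then absorb the polynomial factor $n^\ell$; the paper phrases the last step as $n^\ell\le e^{\epsilon n}$ and reduces directly to the finiteness statement of Lemma~\ref{lem finite pressure on a special parameter} at the shifted parameter $p(b,q)-\epsilon$, but this is the same computation. Your treatment of $|\log|\indumap'||^\ell$ via the pointwise bound $\log|\indumap'|\ll \log n$ from (F1) is slightly more explicit than the paper's ``by a similar argument'', but otherwise the arguments coincide.
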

\begin{proof}
    Let $(b,q)\in\mathcal{N}$ and let $\ell\in\mathbb{N}$.  
We take a small $\epsilon>0$ with $ \LB+2\epsilon<p(b,q)$. 
Since $\lim_{x\to\infty}x^\ell e^{-\epsilon x}=0$, there exists $N\geq 2$ such that for all $n\geq N$ we have $n^\ell\leq e^{\epsilon n}$. Therefore, by \eqref{eq gibbs}, we obtain
\begin{align*}
&   \sum_{n=N}^\infty \sum_{\omega\in\edgepart_n}\int_{[\omega]} |\return\circ\codingmap |^\ell d\codingmeasure_{b,q}
   = \sum_{n=N}^\infty \sum_{\omega\in\edgepart_n}n^\ell \codingmeasure_{b,q}([\omega])
  \\& \asymp\sum_{n=N}^\infty \sum_{\omega\in\edgepart_n}n^\ell
   \exp\left((-q\indupote-b\log|\indumap'|-p(b,q)\return)\circ\codingmap([\omega])-\indupressure(b,q,p(b,q))\right)
   \\&\leq e^{-\indupressure(b,q,p(b,q))} \sum_{n=N}^\infty \sum_{\omega\in\edgepart_n}
   \exp\left((-q\indupote-b\log|\indumap'|-(p(b,q)-\epsilon)\return)\circ\codingmap([\omega])\right)
\end{align*}
By the same argument in the proof of Lemma \ref{lem finite pressure on a special parameter}, we can show that\\ $\sum_{n=N}^\infty \sum_{\omega\in\edgepart_n}
   e^{(-q\indupote-b\log|\indumap'|-(p(b,q)-\epsilon)\return)\circ\codingmap([\omega])}<\infty$, which yields $\codingmeasure_{b,q}(|\return\circ\codingmap|^\ell )<\infty$. 
   Since $\edge$ is finite, $\Lambda$ is compact and thus,   $
   \codingmeasure_{b,q}( |\indupote\circ\codingmap|^\ell)\leq \max_{x\in\Lambda}|\phi(x)|^\ell\codingmeasure_{b,q}(| \return\circ\codingmap|^\ell) <\infty.
   $
   Also,
   by a similar argument, we can show that $\codingmeasure_{b,q}(|\log|\indumap'||^\ell)<\infty$.
\end{proof}
For $(b,q)\in \mathcal{N}$ we define the measures $\measure_{b,q}:=\codingmeasure_{b,q}\circ\codingmap^{-1}$ on $\indulimit$ and 
$
\mu_{b,q}:=(\measure_{b,q}(\return ))^{-1}\sum_{n=0}^{\infty}\sum_{k=n+1}^\infty\measure_{b,q}|_{\{\return=k\}}\circ f^{-n}$ on $\Lambda$.    
Then, by Lemma \ref{lemma finite return time}, Remark \ref{rem measurable bijection} and \eqref{eq Abramov-Kac's formula}, we obtain
\begin{align}\label{eq induced pressure is less than zero}
&\indupressure(b,q,p(b,q))=h(\codingmeasure_{b,q})+\codingmeasure_{b,q}((-q\indupote-b\log|\indumap'|-p(b,q)\return)\circ\codingmap )
    \\&
    \nonumber
    =\measure_{b,q}(\return)
    \left(
    h(\mu_{b,q})+\mu_{b,q}(-q\phi-b\log|f'|)-p(b,q)
    \right) \leq 0.
\end{align}
Moreover, we obtain the following:
\begin{thm}\label{thm uniquness and existence of the equilibrium state}
     For $(b,q)\in \mathcal{N}$ we have that $\indupressure(b,q,p(b,q))=0$. Furthermore,  for $(b,q)\in \mathcal{N}$,  $\mu_{b,q}$ is the unique equilibrium measure for $-q\phi-b\log |f'|$.
\end{thm}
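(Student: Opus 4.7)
The plan is to identify $p(b,q)$ with the critical parameter
\[
s^{*}:=\inf\{s\in\mathbb{R}:\indupressure(b,q,s)\le 0\}
\]
and then conclude by real-analyticity. Inequality \eqref{eq induced pressure is less than zero} already yields $\indupressure(b,q,p(b,q))\le 0$, so $s^{*}\le p(b,q)$. The substantive step is the reverse inequality
\[
p(b,q)\le\max\{\LB,s^{*}\},
\]
which, combined with the defining property $p(b,q)>\LB$ of $\mathcal{N}$, forces $s^{*}=p(b,q)>\LB$. Lemma \ref{lem finite pressure on a special parameter} puts the slice $\{(b,q,s):s>\LB\}$ inside $\mathrm{Int}(Fin)$, so Theorem \ref{thm regularity of induced pressure} gives real-analyticity (hence continuity) of $\indupressure$ at $(b,q,s^{*})$; combining $\indupressure(b,q,s)>0$ for $s<s^{*}$ with $\indupressure(b,q,p(b,q))\le 0$ then forces $\indupressure(b,q,p(b,q))=0$.

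To establish $p(b,q)\le\max\{\LB,s^{*}\}$, I would take an ergodic $\mu\in M(f)$ (after ergodic decomposition) and split on whether $\mu(\indulimit)=0$. If $\mu(\indulimit)=0$, Lemma \ref{lemma equivalent condition not liftable} and ergodicity force $\mu=\delta_{x_i}$ for some $i\in\p$, and hence $h(\mu)+\mu(-q\phi-b\log|f'|)=-q\alpha_i\le\LB$. If $\mu(\indulimit)>0$, the induced measure $\measure:=\mu|_{\indulimit}/\mu(\indulimit)$ lies in $M(\indumap)$ and, by Remark \ref{rem measurable bijection}, is the $\codingmap$-pushforward of some $\codingmeasure\in M(\shift)$ with $h(\codingmeasure)=h(\measure)$. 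Setting $s:=h(\mu)+\mu(-q\phi-b\log|f'|)$ and applying \eqref{eq classical Abramov-Kac's formula} to both $\phi$ and $\log|f'|$ (using $\log|\indumap'|=\sum_{i=0}^{\return-1}\log|f'|\circ f^{i}$), one computes
\[
h(\codingmeasure)+\codingmeasure\bigl((-q\indupote-b\log|\indumap'|-s\return)\circ\codingmap\bigr)=\measure(\return)\,s-s\,\measure(\return)=0,
\]
so the variational principle (Theorem \ref{thm variational principle induce}) yields $\indupressure(b,q,s)\ge 0$, i.e.\ $s\le s^{*}$.

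For the uniqueness of the equilibrium measure, note first that $\indupressure(b,q,p(b,q))=0$ together with \eqref{eq induced pressure is less than zero} forces $h(\mu_{b,q})+\mu_{b,q}(-q\phi-b\log|f'|)=p(b,q)$, so $\mu_{b,q}$ is an equilibrium measure. Conversely, given any equilibrium measure $\nu$, I reduce by ergodic decomposition to the ergodic case, rule out $\nu(\indulimit)=0$ as above (since $p(b,q)>\LB$), and then repeat the Abramov-Kac computation with $s=p(b,q)$: the resulting symbolic lift is an equilibrium state for $-q\indupote\circ\codingmap-b\log|\indumap'\circ\codingmap|-p(b,q)\return\circ\codingmap$ attaining $\indupressure(b,q,p(b,q))=0$. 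Lemma \ref{lemma finite return time} and Theorem \ref{thm equilibrium state induce} identify this equilibrium state as the Gibbs measure $\codingmeasure_{b,q}$, and tracing back through the symbolic coding and the inducing construction \eqref{eq def lift} forces $\nu=\mu_{b,q}$.

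The main obstacle is the upper bound $p(b,q)\le\max\{\LB,s^{*}\}$: this is where the lifting of $\mu$ via Remark \ref{rem measurable bijection} and the Abramov-Kac formula must be combined with the variational principle on $\inducoding$ to transfer information from $f$ to the induced symbolic system, and where one needs to verify that the lifted measures have integrable potentials so as to be admissible in the supremum of Theorem \ref{thm variational principle induce}. Once that inequality is in place, the proof reduces to a continuity argument for $\indupressure$ on $\mathrm{Int}(Fin)$ together with the uniqueness of the Gibbs/equilibrium state on the countable Markov shift.
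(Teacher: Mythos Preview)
Your proposal is correct and follows essentially the same route as the paper: both arguments combine \eqref{eq induced pressure is less than zero} with a lifting of near-equilibrium ergodic measures via Lemma \ref{lemma equivalent condition not liftable}, Remark \ref{rem measurable bijection} and Abramov--Kac to force $\indupressure(b,q,s)>0$ for $s$ just below $p(b,q)$, and then invoke continuity of $s\mapsto\indupressure(b,q,s)$ on $(\LB,\infty)$; the uniqueness argument is identical. The only cosmetic difference is that you organise the first step through the critical parameter $s^{*}=\inf\{s:\indupressure(b,q,s)\le 0\}$, whereas the paper works directly with $p(b,q)-\epsilon$ and an ergodic $\epsilon$-approximate equilibrium measure.
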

\begin{proof}
Let $(b,q)\in\mathcal{N}$. We first show that $\indupressure(b,q,p(b,q))=0$. By \eqref{eq induced pressure is less than zero}, it is enough to show that $\indupressure(b,q,p(b,q))\geq 0$. Let $\epsilon>0$ be a small number with $ \LB+\epsilon<p(b,q)$. By \cite[Corollary 9.10.1]{walters2000introduction}, there exists a ergodic measure $\mu\in M(f)$ such that we have 
\[
h(\mu)+\mu (-q\phi-b\log|f'|)>p(b,q)-\epsilon.
\]
Note that $\mu\notin \conv$. Indeed, if $\mu\in \conv$ then we have 
$h(\mu)+\mu (-q\phi-b\log|f'|)+\epsilon\leq  \LB+\epsilon<p(b,q)$ which yields a contradiction. Thus, by Remark \ref{rem measurable bijection}, Lemma \ref{lemma equivalent condition not liftable} and \eqref{eq classical Abramov-Kac's formula}, we obtain 
\begin{align}\label{eq proof of uniquness existence larger than zero}
    &\indupressure(b,q,p(b,q)-\epsilon)\geq h(\measure)+\measure(-q\indupote-b\log |\indumap'|-(p(b,q)-\epsilon)\return)
    \\&=\measure( \return )\left(h(\mu)+\mu (-q\phi-b\log |f'|)-(p(b,q)-\epsilon)\right)>0, \nonumber
\end{align}
where $\measure=\mu|_{\indulimit}/\mu(\indulimit)$. 
By Lemma \ref{lem finite pressure on a special parameter}, 
the function $s\mapsto\indupressure(b,q,s)$ is continuous on $( \LB,\infty)$. Hence, by \eqref{eq proof of uniquness existence larger than zero} and \eqref{eq induced pressure is less than zero}, we obtain $\indupressure(b,q,p(b,q))=0$. Moreover, by \eqref{eq induced pressure is less than zero}, the measure $\mu_{b,q}$ is an equilibrium measure for $-q\phi-b\log|f'|$.

Next, we shall show that $\mu_{b,q}$ is the unique equilibrium measure for $-q\phi-b\log|f'|$. Let $\nu$ be an equilibrium measure for $-q\phi-b\log|f'|$.
By the ergodic decomposition theorem (see \cite[Theorem 5.1.3]{viana}), we may assume that $\nu$ is ergodic.  As above, we have $\nu\notin \conv$ and thus, $\nu(\indulimit)>0$. 
Let $\tilde\nu=\nu|_{\indulimit}/\nu(\indulimit)$.
By Remark \ref{rem measurable bijection}, there exists $\tilde \nu'\in M(\shift)$ such that $\tilde \nu=\tilde \nu'\circ\codingmap^{-1}$ and $h(\tilde \nu)=h(\tilde \nu')$.
Then, $\tilde\nu'$ is an equilibrium measure for $(-q\indupote-b\log|\indumap'|-p(b,q)\return)\circ\codingmap$. Indeed, by Theorem \ref{thm variational principle induce}, Remark \ref{rem measurable bijection}, \eqref{eq classical Abramov-Kac's formula} and \eqref{eq induced pressure is less than zero}, we have 
\begin{align*}
   &0\geq\indupressure(b,q,p(b,q))
   \geq h(\tilde\nu')+\tilde\nu'((-q\indupote-b\log |\indumap'|-p(b,q)\return)\circ\codingmap)
    \\&=\tilde\nu( \return)\left(h(\nu)+\nu (-q\phi-b\log |f'|)-p(b,q)\right)=0. \nonumber    
\end{align*}
Therefore, by Theorem \ref{thm equilibrium state induce}, we obtain $\tilde \nu'=\codingmeasure_{b,q}$ and thus, $\nu=\mu_{b,q}$.
\end{proof}

For two function $\psi_1,\psi_2:\indulimit\rightarrow\mathbb{R}$ and $(b,q)\in \mathcal{N}$ we define the asymptotic variance of $\psi_1$ and $\psi_2$ by
\[
\sigma^2_{b,q}(\psi_1,\psi_2):=    \lim_{n\to\infty}\frac{1}{n}
    \codingmeasure_{b,q}\left( S_n\left(\psi_1\circ\codingmap-\codingmeasure_{b,q}(\psi_1\circ\codingmap) \right)
    S_n\left(\psi_2\circ\codingmap-\codingmeasure_{b,q}(\psi_2\circ\codingmap)\right)\right)
\]
when the limit exists. If $\psi_1=\psi_2$ then we write $\sigma^2_{b,q}(\psi_1):=\sigma^2_{b,q}(\psi_1,\psi_2)$.

\begin{thm}\label{thm regularity of non induced pressure}
The pressure function $(b,q)\mapsto p(b,q)$ is real-analytic on $\mathcal{N}$ and for $(b,q)\in \mathcal{N}$ we have
    \begin{align}\label{eq ruell's formula noninduced}
    \frac{\partial}{\partial b}p(b,q)=-\lambda(\mu_{b,q}) \text{ and } \frac{\partial}{\partial q}p(b,q)=-\mu_{b,q}( \phi ). 
    \end{align} 
    Moreover, for $(b,q)\in \mathcal{N}$ we have $\frac{\partial^2}{\partial q^2}p(b,q)=0$ if and only if $\alpha_{\min}=\alpha_{\max}$.  
\end{thm}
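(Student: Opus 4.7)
The plan is to apply the implicit function theorem to the identity $\indupressure(b,q,p(b,q))=0$ on $\mathcal{N}$ (Theorem~\ref{thm uniquness and existence of the equilibrium state}), combined with the real-analyticity of $\indupressure$ on $\mathrm{Int}(Fin)$ (Theorem~\ref{thm regularity of induced pressure}). For $(b,q)\in\mathcal{N}$ the inequality $p(b,q)>\LB$ is preserved in a neighborhood by continuity, so Lemma~\ref{lem finite pressure on a special parameter} places an entire neighborhood of $(b,q,p(b,q))$ inside $Fin$, i.e.\ $(b,q,p(b,q))\in\mathrm{Int}(Fin)$. The partial derivative $\partial_s\indupressure(b,q,p(b,q))=-\codingmeasure_{b,q}(\return\circ\codingmap)$ is nonzero by Lemma~\ref{lemma finite return time} and Theorem~\ref{thm regularity of induced pressure}, so the implicit function theorem yields real-analyticity of $p$ on $\mathcal{N}$. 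Implicit differentiation of $\indupressure(b,q,p(b,q))=0$ gives
\[
\frac{\partial p}{\partial b}=-\frac{\codingmeasure_{b,q}(\log|\indumap'\circ\codingmap|)}{\codingmeasure_{b,q}(\return\circ\codingmap)},\qquad \frac{\partial p}{\partial q}=-\frac{\codingmeasure_{b,q}(\indupote\circ\codingmap)}{\codingmeasure_{b,q}(\return\circ\codingmap)},
\]
and the Abramov--Kac identities \eqref{eq Abramov-Kac's formula} convert these into \eqref{eq ruell's formula noninduced}.

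For the second-derivative characterisation, one implication is easy: if $\alpha_{\min}=\alpha_{\max}=c$ then $\mu(\phi)=c$ for every $\mu\in M(f)$, so $p(b,q)=-qc+p(b,0)$ by the variational principle, and hence $\partial^2 p/\partial q^2\equiv 0$.

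For the converse I differentiate $\indupressure(b,q,p(b,q))=0$ twice in $q$ and use the standard identification (cf.\ \cite{mauldin2003graph}) of the pure second $q$- and $s$-derivatives of $\indupressure$ at $(b,q,p(b,q))$ with the asymptotic variances and covariance of $\indupote$ and $\return$ under the Gibbs measure $\codingmeasure_{b,q}$, yielding
\[
\frac{\partial^2 p}{\partial q^2}(b,q)=\frac{\sigma^2_{b,q}(\tilde g)}{\codingmeasure_{b,q}(\return\circ\codingmap)},\qquad\tilde g:=\indupote-\mu_{b,q}(\phi)\,\return,
\]
after collecting terms via the bilinear expansion of the variance of $\indupote\circ\codingmap+(\partial p/\partial q)\return\circ\codingmap$. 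Vanishing of $\partial^2 p/\partial q^2$ forces $\sigma^2_{b,q}(\tilde g)=0$; since $\tilde g\circ\codingmap$ is locally H\"older on the finitely primitive shift $(\inducoding,\shift)$, the standard Livsic-type criterion yields a locally H\"older $u$ with $\tilde g\circ\codingmap=u\circ\shift-u$.

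To deduce $\alpha_{\min}=\alpha_{\max}$ from the coboundary, I fix $i\in\p$ and $j\in\edge\setminus\{i\}$ and consider the $f$-periodic point $y^{(n)}$ with symbolic code $(j,i^{n},j,i^{n},\ldots)$ and $f$-period $n+1$. A direct check via \eqref{eq form edges 1}--\eqref{eq foem edges 2} and \eqref{eq markov} shows that the $\indumap$-orbit of $y^{(n)}$ is $\shift$-periodic of period two in $\inducoding$, with letters $ji^{n}j\in\edgepart_n$ and $iji\in\edgepart_1$. Summing the coboundary identity around this orbit and using Abramov--Kac at the orbit level gives $\nu_n(\phi)=\mu_{b,q}(\phi)=:c$, where $\nu_n$ is the $f$-periodic measure on the orbit of $y^{(n)}$. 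Since $|T_i^{m}T_j(I)|\to 0$ as $m\to\infty$ by \eqref{eq uniformly decay of sylinders} and $f^{k}(y^{(n)})\in T_i^{n+1-k}T_j(I)$ for $k=1,\ldots,n$, the orbit of $y^{(n)}$ concentrates at $x_i$, so $\nu_n\to\delta_{x_i}$ weakly, forcing $\alpha_i=c$ for every $i\in\p$. Any ergodic $\nu\in M(f)$ with $\nu(\indulimit)>0$ lifts to an $\indumap$-invariant measure on $\inducoding$ with finite return time, so the coboundary identity and \eqref{eq Abramov-Kac's formula} give $\nu(\phi)=c$ directly; the remaining ergodic measures are Diracs at parabolic fixed points by Lemma~\ref{lemma equivalent condition not liftable}, which also satisfy $\nu(\phi)=c$. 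An ergodic decomposition of a general $\mu\in M(f)$ concludes $\mu(\phi)=c$, so $\alpha_{\min}=\alpha_{\max}$. The main obstacle is this converse direction: invoking the zero-variance/coboundary criterion on the countable Markov shift (which relies on local H\"older regularity and integrability, provided by (H) and Lemma~\ref{lemma finite return time}) and the symbolic bookkeeping in $\edgesindu$ needed to verify that $(ji^{n}j,iji)^{\infty}\in\inducoding$ and that its $f$-lifts approximate the Dirac masses at parabolic fixed points.
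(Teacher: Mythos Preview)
Your argument for real-analyticity, Ruelle's formula, the variance identity
\[
\frac{\partial^2 p}{\partial q^2}(b,q)=\frac{\sigma^2_{b,q}(\indupote-\mu_{b,q}(\phi)\return)}{\codingmeasure_{b,q}(\return\circ\codingmap)},
\]
and the easy implication $\alpha_{\min}=\alpha_{\max}\Rightarrow\partial_q^2 p\equiv 0$ coincides with the paper's proof essentially line for line.

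For the converse you take a genuinely different route. The paper, after obtaining the bounded coboundary $\tilde u'$ on $\inducoding$ from \cite[Lemma~4.8.8]{mauldin2003graph}, proves $\alpha_i=\mu_{b,q}(\phi)$ by a direct boundedness contradiction (if $\alpha_i\ne\mu_{b,q}(\phi)$, then $(\indupote-\mu_{b,q}(\phi)\return)(x_k)\to\pm\infty$ along points $x_k\in\codingmap([\omega_1 i^k\omega_2])$, contradicting $|\tilde u'|<\infty$). It then carries out an explicit inductive construction of an $f$-coboundary $u$ on $\Lambda\setminus Z$ across the nested ``parabolic tail'' sets $P_{i,k}$, and integrates $\phi=u-u\circ f+\mu_{b,q}(\phi)$ against an arbitrary $\mu\in M(f)$. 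Your approach replaces both steps: you deduce $\alpha_i=\mu_{b,q}(\phi)$ from the coboundary vanishing on the $\shift$-periodic orbits $(ji^nj,iji)^\infty$ together with the weak convergence $\nu_n\to\delta_{x_i}$, and then you bypass the $f$-coboundary construction entirely by splitting ergodic $\nu\in M(f)$ via Lemma~\ref{lemma equivalent condition not liftable} into Dirac masses at parabolic points (handled by $\alpha_i=c$) and liftable measures (handled by integrating the $\indumap$-coboundary on $\inducoding$ and applying \eqref{eq classical Abramov-Kac's formula}), finishing with ergodic decomposition. Your symbolic bookkeeping for $(ji^nj,iji)^\infty\in\inducoding$ is correct (the overlap conditions \eqref{eq markov} match, and $\return(y^{(n)})=n$, $\return(\indumap(y^{(n)}))=1$), and the weak convergence $\nu_n\to\delta_{x_i}$ follows from \eqref{eq uniformly decay of sylinders} as you indicate. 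This is a cleaner, more measure-theoretic path that avoids the paper's somewhat laborious construction of $u$ on $\Lambda$; the paper's route, on the other hand, yields the stronger pointwise statement $\phi=u-u\circ f+c$ on $\Lambda\setminus Z$.

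One point to make explicit: in your last step you need the Livsic solution $u$ on $\inducoding$ to be integrable with respect to the lifted measure $\tilde\nu'$ of an \emph{arbitrary} ergodic $\nu$ with $\nu(\indulimit)>0$, not just with respect to $\codingmeasure_{b,q}$, in order to conclude $\tilde\nu'(u\circ\shift-u)=0$. The version of the zero-variance criterion you invoke (e.g.\ \cite[Lemma~4.8.8]{mauldin2003graph}) in fact produces a \emph{bounded} H\"older $u$, which settles this; you should state boundedness rather than only ``locally H\"older''.
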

\begin{proof}
Let $(b_0,q_0)\in\mathcal{N}$. By Lemma \ref{lem finite pressure on a special parameter}, there exists a open neighborhood $O\subset \mathbb{R}^3$ of $(b_0,q_0,p(b_0,q_0))$ such that for all $(b,q,s)\in O$ we have $\indupressure(b,q,s)<\infty$. Also, by Theorem \ref{thm regularity of induced pressure}, we have 
$
\left.
\frac{\partial}{\partial s}\indupressure(b,q,s)\right|_{(b,q,s)=(b_0,q_0,p(b_0,q_0))}=-\codingmeasure_{b_0,q_0}( \return\circ\codingmap )<0.
$
Therefore, by the implicit function theorem and Theorem \ref{thm regularity of induced pressure}, the function $p$ is real-analytic at $(b_0,q_0)$. Moreover, the implicit function theorem, Theorem \ref{thm regularity of induced pressure} and \eqref{eq Abramov-Kac's formula} gives
\begin{align*}
    &\frac{\partial}{\partial b}p(b,q)=\frac{-\lambda(\codingmeasure_{b,q})}{\codingmeasure_{b,q}( \return\circ\codingmap )}
    =-\lambda(\mu_{b,q})
    \text{ and }
    \frac{\partial}{\partial q}p(b,q)=\frac{-\codingmeasure_{b,q}( \indupote\circ\codingmap )}{\codingmeasure_{b,q}( \return\circ\codingmap )}
    =-\mu_{b,q}( \phi ).
\end{align*}
Also, by the implicit function theorem and Ruelle's formula for the second derivative of the pressure function  \cite[Proposition 2.6.14]{mauldin2003graph}, we obtain
\begin{align}\label{eq second derivatibe 1}
    \frac{\partial^2}{\partial q^2}p(b,q)
    =&
    \Bigg(
    \sigma^2_{b,q}(\indupote)\left(\codingmeasure_{b,q}( \return\circ\codingmap )\right)^2
    -2\sigma_{b,q}^2
    (\indupote,\return)
    \codingmeasure_{b,q}(\indupote\circ\codingmap )
    \codingmeasure_{b,q}( \return\circ\codingmap )
    \\&+\sigma^2_{b,q}(\return)\left(\codingmeasure_{b,q}(\indupote\circ\codingmap )\right)^2\Bigg)\frac{1}
    {(\codingmeasure_{b,q}( \return \circ \codingmap) )^3}.\nonumber
\end{align}
By \eqref{eq Abramov-Kac's formula}, we have
\begin{align*}
&    \sigma^2_{b,q}(\indupote)\left(\codingmeasure_{b,q}( \return\circ\codingmap )\right)^2
    -2\sigma_{b,q}^2
    (\indupote,\return)
    \codingmeasure_{b,q}( 
    \indupote\circ\codingmap)\codingmeasure_{b,q}
    ( \return\circ\codingmap )
    +\sigma^2_{b,q}(\return)\left(\codingmeasure_{b,q}(\indupote\circ\codingmap )\right)^2
    \\&
    =\lim_{n\to\infty}\frac{1}{n}
    \codingmeasure_{b,q}
    \Bigg(\left(
    S_n\left(\indupote\circ\codingmap-\codingmeasure_{b,q}(\indupote\circ\codingmap )\right)\codingmeasure_{b,q}( \return\circ\codingmap )
    \right.\\&\left.\left.
    -S_n\left(\return\circ\codingmap-\codingmeasure_{b,q}(\return\circ\codingmap )\right)\codingmeasure_{b,q}(\indupote\circ\codingmap )
    \right)^2\right)
    =(\codingmeasure_{b,q}(\return\circ\codingmap))^2
    \sigma^2_{b,q}(\indupote-\mu_{b,q}(\phi)\return).
\end{align*}
Combining this with \eqref{eq second derivatibe 1}, we obtain
\begin{align}\label{eq important formula second derivative}
    \frac{\partial^2}{\partial q^2}p(b,q)=\frac{\sigma^2_{b,q}(\indupote-\mu_{b,q}(\phi)\return)}{\measure_{b,q}( \return ) }.
\end{align}
We shall show the last statement in this theorem. If $\alpha_{\text{min}}=\alpha_{\text{max}}$ then there exists a constant $c\in\mathbb{R}$ such that $\{c\}=\{\int \phi d\mu:\mu\in M(f)\}$. Therefore, by \eqref{eq ruell's formula noninduced}, $\frac{\partial^2}{\partial q^2}p(b,q)=0$. 
Conversely, we assume that  $\frac{\partial^2}{\partial q^2}p(b,q)=0$. Then, by \eqref{eq important formula second derivative}, we have $\sigma^2_{b,q}(\indupote-\mu_{b,q}(\phi)\return)=0$. Thus, by \eqref{eq Abramov-Kac's formula}, Lemma \ref{lem finite pressure on a special parameter} and \cite[Lemma 4.8.8]{mauldin2003graph}, there exists bounded continuous function $\tilde u':\inducoding\rightarrow \mathbb{R}$ such that $(\indupote-\mu_{b,q}(\phi)\return)\circ\codingmap=\tilde u'-\tilde u'\circ\shift$.
Recall that, by Remark \ref{rem measurable bijection}, $\codingmap|_{\inducoding\setminus\codingmap^{-1}(J_0)}$ is one-to-one and $\codingmap|_{\inducoding\setminus\codingmap^{-1}(J_0)}^{-1}$ is continuous.
For 
$x\in J_0$ we fix $\tau_x\in \inducoding$ with $x=\codingmap(\tau_x)$ and define $\tilde u:\indulimit\rightarrow \mathbb{R}$ by $\tilde u|_{\indulimit\setminus J_0}=\tilde u'\circ\codingmap|_{\inducoding\setminus\codingmap^{-1}(J_0)}^{-1}$ and $\tilde u(x)=\tilde u'(\tau_x)$ for $x\in J_0$. Since $J_0$ is a countable set, $\tilde u$ is a Borel measurable bounded function satisfying 
\begin{align}\label{eq proof of cohomologous indu coboundary}
    \indupote-\mu_{b,q}(\phi)\return=\tilde u-\tilde u\circ\indumap. 
\end{align}
From this, we can see that for all $i\in\p$ we have 
\begin{align}\label{eq proof of cohomologous boundary}
\alpha_i=\mu_{b,q}(\phi).    
\end{align}
The proof of \eqref{eq proof of cohomologous boundary} proceeds as follows:
For a contradiction we assume that there exists $i\in\p$ such that $\alpha_i=\phi(x_i)<\mu_{b,q}(\phi)$. Then, since $\phi$ is continuous on $\Lambda$, there exist $N\in\mathbb{N}$ and a small number $\epsilon>0$ such that for all $n\geq N$ and $x\in I_{i^n}$ we have 
$\phi(x)<\mu_{b,q}(\phi)-\epsilon.$    
 We fix $\omega_1,\omega_2\in \edge\setminus\{i\}$ and a sequence $\{x_k\}_{k\in\mathbb{N}}$ with $x_k\in \codingmap([\omega_1 i^k\omega_2])$ ($k\in\mathbb{N}$). Then, we obtain
\[
\limsup_{k\to\infty}(\indupote-\mu_{b,q}(\phi)\return)(x_k)=\limsup_{k\to\infty}\sum_{j=0}^{k-1}(\phi-\mu_{b,q}(\phi))(f^j(x_k))=-\infty.
\]
However, since $\tilde u$ is a bounded function, for all $k\in\mathbb{N}$ we obtain $|(\indupote-\mu_{b,q}(\phi)\return)(x_k)|\leq 2\sup\{|u(x)|:{x\in \indulimit}\}<\infty$. This is a contradiction. By a similar argument, we also obtain a contradiction in the case $\alpha_i>\mu_{b,q}(\phi)$. Hence, we obtain \eqref{eq proof of cohomologous boundary}.
We set 
\[
N:=\bigcup_{i\in\p}\{x_i\},\ 
Z:=\bigcup_{i\in\p}\bigcup_{n\in\mathbb{N}}f^{-n}(x_i)\setminus N
\text{ and }
P:=\bigcup_{i\in\p}I_{ii}\setminus(N\cup Z\cup\indulimit).
\]
We will inductively construct a Borel measurable function $u:\Lambda\rightarrow \mathbb{R}$ such that for all $x\in \Lambda\setminus Z$ we have $\phi(x)=u(x)-u(f(x))+\mu_{b,q}(\phi)$.
Note that we have the following direct decomposition of $\Lambda$:
\begin{align}\label{eq direct sum of lambda}
    \Lambda=\indulimit\cup P\cup N\cup Z.
\end{align}
Define 
\begin{align}\label{eq proof of cohomologous construction 1}
    u(x):=\tilde u(x)
    \text{ for all $x\in \indulimit$}
    \text{ and }u(x)=0
    \text{ for all $x\in N\cup Z$}.
\end{align}
For $i\in\p$ we define
\[
P_{i,2}:=\bigcup_{\omega\in \edge\setminus\{i\}}I_{i^2\omega}\setminus(N\cup Z\cup \indulimit)
\text{ and }
P_{i,k}:=\bigcup_{\omega\in \edge\setminus\{i\}}I_{i^k\omega}\setminus(N\cup Z\cup P_{i,k-1})
\text{ for $k\geq 3$}
.
\]
Then, we obtain the direct decomposition $P=\bigcup_{i\in\p}\bigcup_{k\in\mathbb{N}}P_{i,k}$.
Let $i\in\p$.
Since for $x\in P_{i,2}$
we have $f(x)\in \indulimit$, $u(f(x))$ is already defined by \eqref{eq proof of cohomologous construction 1}. Thus,  the following definition is well-defined:
\begin{align}\label{eq proof of cohomologous construction 2}
u(x):=u(f(x))+\phi(x)-\mu_{b,q}(\phi)
\text{ for } x\in P_{i,2}.    
\end{align}
Next, we consider the definition $u(x)$ for $x\in P_{i,3}$. Since for $x\in P_{i,3}$ we have $f(x)\in P_{i,2}$, $u(f(x))$ is already defined by \eqref{eq proof of cohomologous construction 2}. For $x\in P_{i,3}$ we define
$u(x):=u(f(x))+\phi(x)-\mu_{b,q}(\phi)$.
Let $k\geq 4$.
Assume that for all $2\leq \ell\leq k$ and $x\in P_{i,\ell}$
we have already defined $u(x)$ by 
\[
u(x):=u(f(x))+\phi(x)-\mu_{b,q}(\phi).
\]
Since for $x\in P_{i,k+1}$
we have $f(x)\in P_{i,k}$, $u(f(x))$ is already defined. For $x\in P_{i,k+1}$ we define $u(x):=u(f(x))+\phi(x)-\mu_{b,q}(\phi)$. Therefore, by induction, the following definition is well-defined:
\begin{align}\label{eq proof of cohomologous def u on P}
    u(x)=u(f(x))+\phi(x)-\mu_{b,q}(\phi) \text{ for }x\in P.
\end{align}
We shall show that $u:\Lambda\rightarrow\mathbb{R}$ defined by \eqref{eq proof of cohomologous construction 1} and \eqref{eq proof of cohomologous def u on P} satisfies 
\begin{align}\label{eq proof of cohomologous coboundary}
    \phi(x)=u(x)-u(f(x))+\mu_{b,q}(\phi) \text{ for all } x\in \Lambda\setminus Z.
\end{align} 
By \eqref{eq proof of cohomologous boundary}, \eqref{eq proof of cohomologous construction 1} and \eqref{eq proof of cohomologous def u on P}, for $x\in P\cup N$ we have \eqref{eq proof of cohomologous coboundary}. By \eqref{eq proof of cohomologous indu coboundary}, for $x\in \codingmap(\bigcup_{\omega\in E_1}[\omega])$ we have \eqref{eq proof of cohomologous coboundary}. Thus, it is enough to show that for all $n\geq 2$ and $x\in \codingmap(\bigcup_{\omega\in E_n}[\omega])$ we have \eqref{eq proof of cohomologous coboundary}. Let $n\geq 2$ and let $x\in \codingmap(\bigcup_{\omega\in E_n}[\omega])\setminus\codingmap(\bigcup_{\omega\in E_{n-1}}[\omega])$.
Note that, by \eqref{eq foem edges 2}, there exists $i\in\p$ such that for all $1\leq k\leq n-1$ we have $f^{k}(x)\in P_{i,n-(k-1)}$.
By \eqref{eq proof of cohomologous indu coboundary}, \eqref{eq proof of cohomologous construction 1} and \eqref{eq proof of cohomologous def u on P}, we have 
\begin{align*}
    &\sum_{k=0}^{n-1}(\phi-\mu_{b,q}(\phi))(f^k(x))=\tilde u (x)-\tilde u(f^n(x))
    \\&= u(x)-u(f(x))+\sum_{k=1}^{n-1}\left(u(f^k(x))-u(f^{k+1}(x))\right)
    \\&=u(x)-u(f(x))+\sum_{k=1}^{n-1}(\phi-\mu_{b,q}(\phi))(f^k(x)).
\end{align*}
Hence, we obtain $\phi(x)=u(x)-u(f(x))+\mu_{b,q}(\phi)$. This completes the proof of \eqref{eq proof of cohomologous coboundary}. 
It remains to show that for all $\mu\in M(f)$ we have $\mu(\phi)=\mu_{b,q}(\phi)$. Let $\mu\in M(f)$. Since $Z$ is countable set, there is no periodic orbits in $Z$ and $\mu$ is $f$-invariant, we have $\mu(Z)=0$. Thus, by \eqref{eq proof of cohomologous coboundary}, for all $\mu\in M(f)$ we obtain
\[
\mu(\phi)=\int_{\Lambda\setminus Z}\phi d\mu
=\int_{\Lambda\setminus Z}(u-u\circ f)d\mu+\mu_{b,q}(\phi)=\mu_{b,q}(\phi).
\]
This implies that $\alpha_{\text{min}}=\alpha_{\text{max}}=\mu_{b,q}(\phi)$ and the proof is complete.
\end{proof}

For $(b,q)\in\mathbb{R}^2$ we define the set of equilibrium measures for $-q\phi-b\log |f'|$ by 
\[
M_{b,q}:=\{\nu\in M(f):p(b,q)=h(\nu)+\nu(-q\phi-b\log |f'|)\}.
\]
For a convex function $(x_1,\cdots,x_n)\in\mathbb{R}^n\mapsto V(x_1,\cdots,x_n)\in\mathbb{R}$ $(n\in\mathbb{N})$,  $\boldsymbol{\hat x}=(\hat x_1,\cdots,\hat x_n)\in\mathbb{R}^2$ and $1\leq k\leq n$ we denote by $V^+_{x_k}(\boldsymbol{\hat x})$ the right-hand derivative of $V$ with respect to the variable $x_k$ at $\boldsymbol{\hat x}$ and by $V^-_{x_k}(\boldsymbol{\hat x})$ the left-hand derivative of $V$ with respect to the variable $x_k$ at $\boldsymbol{\hat x}$.
 \begin{prop}\label{prop derivatibe of pressure without conditions}
     For all $(b_0,q_0)\in \mathbb{R}^2$ we have 
     \begin{align*}
         p^+_{q}(b_0,q_0)=\sup_{\nu\in M_{b_0,q_0}}\{-\nu(\phi)\} \text{ and }
         p^-_{q}(b_0,q_0)=\inf_{\nu\in M_{b_0,q_0}}\{-\nu(\phi)\}.
     \end{align*}
      \end{prop}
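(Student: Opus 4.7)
The plan is to combine the variational principle with a weak-$*$ compactness argument applied to nearly optimal measures, exploiting the convexity of $p$ in the variable $q$. First I would treat the ``easy'' inclusions. Fix $\nu\in M_{b_0,q_0}$. For every $q\in\mathbb{R}$, the variational principle gives
\[
p(b_0,q)\geq h(\nu)+\nu(-q\phi-b_0\log|f'|)=p(b_0,q_0)-(q-q_0)\nu(\phi).
\]
Dividing by $q-q_0$ and letting $q\to q_0^{\pm}$ yields $-\nu(\phi)\in[p^-_q(b_0,q_0),p^+_q(b_0,q_0)]$, so that taking supremum and infimum over $\nu\in M_{b_0,q_0}$ produces the inequalities $\sup\{-\nu(\phi)\}\leq p^+_q(b_0,q_0)$ and $\inf\{-\nu(\phi)\}\geq p^-_q(b_0,q_0)$.

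For the reverse inequalities I focus on $p^+_q$; the case of $p^-_q$ is symmetric. Select $h_n\downarrow 0$ and, by the variational principle at $q_0+h_n$, choose $\nu_n\in M(f)$ with
\[
h(\nu_n)+\nu_n(-(q_0+h_n)\phi-b_0\log|f'|)\geq p(b_0,q_0+h_n)-h_n^2.
\]
Subtracting the bound $h(\nu_n)+\nu_n(-q_0\phi-b_0\log|f'|)\leq p(b_0,q_0)$ coming from the variational principle at $q_0$ yields
\[
-\nu_n(\phi)\geq\frac{p(b_0,q_0+h_n)-p(b_0,q_0)}{h_n}-h_n.
\]
Since $\Lambda$ is compact, $M(f)$ is weak-$*$ compact, so after passing to a subsequence I may assume $\nu_{n_k}\rightharpoonup\nu$; continuity of $\phi$ then gives $-\nu(\phi)\geq p^+_q(b_0,q_0)$.

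The main obstacle is to verify that the limit $\nu$ actually lies in $M_{b_0,q_0}$. Rewriting the near-optimality as
\[
h(\nu_n)+\nu_n(-q_0\phi-b_0\log|f'|)\geq p(b_0,q_0+h_n)-h_n^2+h_n\nu_n(\phi),
\]
taking $\limsup_{k\to\infty}$, and using continuity of $p$ together with the weak-$*$ continuity of $\nu\mapsto\nu(\phi)$ and $\nu\mapsto\nu(\log|f'|)$ (which hold because $\phi$ is continuous and $\log|f'|$ extends continuously to the parabolic points by (NEI3), both on the compact set $\Lambda$) reduces the argument to upper semicontinuity of $\nu\mapsto h(\nu)$ on $M(f)$. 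This USC is the delicate point; I would invoke the classical Misiurewicz-type result for piecewise monotonic interval maps, which applies here because $f$ is piecewise $C^{1+\varepsilon}$ with finitely many branches. Granting USC, one obtains $h(\nu)+\nu(-q_0\phi-b_0\log|f'|)\geq p(b_0,q_0)$, so that $\nu\in M_{b_0,q_0}$ and $p^+_q(b_0,q_0)\leq -\nu(\phi)\leq \sup_{\nu'\in M_{b_0,q_0}}\{-\nu'(\phi)\}$, completing the proof.
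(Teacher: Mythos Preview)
Your argument is correct and follows essentially the same route the paper indicates: the paper's proof simply cites \cite[Proposition~2.3]{kessebohmer2004multifractaltobeappdated} and lists as ingredients the weak-$*$ compactness of $M(f)$, the convexity of the pressure, and the variational principle --- precisely the tools you deploy. Your worry about upper semicontinuity of the entropy map is legitimate but already resolved in the paper's framework (it is invoked without comment in the proof of Lemma~\ref{lemma equilibrium measure near Delta}), so you may take it as a standing assumption rather than a gap.
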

      \begin{proof}
This follows from a slight modification of the proof of \cite[Proposition 2.3]{kessebohmer2004multifractaltobeappdated}, using the compactness of $M(f)$ in the weak* topology, the convexity of the topological pressure, and the variational principle for the topological pressure.
      \end{proof}

\section{Multifractal analysis}\label{sec multifractal}
In this section, we prove Theorem \ref{thm main}.
As in the previous section, we assume that $f\in \maps$ and $\phi\in \mathcal{R}$ in this section.
Recall that 
$\dimension:=\dim_H(\Lambda).$
Since each branch $f_i$ ($i\in\edge$) of $f$ is $C^{1+\varepsilon}$ and the condition (F2) holds, we can apply \cite[Theorem 4.6]{mauldin2000parabolictobeappdated} to obtain the following theorem:
\begin{thm}\label{thm bowen formula}{\cite[Theorem 4.6]{mauldin2000parabolictobeappdated}}
    We have
    \[
    \delta=\sup\left\{\frac{h(\nu)}{\lambda(\nu)}:\nu\in M(f),\ \lambda(\nu)>0\right\}
    =\min\{t\in\mathbb{R}: P(-t\log|f'|)\leq 0\}.
    \]
\end{thm}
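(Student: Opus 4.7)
\textit{Proof proposal.} My plan is to derive both equalities from the thermodynamic machinery of Section~\ref{sec thermodynamic}, rather than invoke the cited result of \cite{mauldin2000parabolictobeappdated}. Set $t^{*}:=\min\{t\in\mathbb{R}:P(-t\log|f'|)\leq 0\}$; this minimum exists because $t\mapsto P(-t\log|f'|)$ is convex, non-increasing, continuous, and eventually negative. I would prove the two equalities in turn: first $\sup\{h(\nu)/\lambda(\nu):\nu\in M(f),\ \lambda(\nu)>0\}=t^{*}$, then $\delta=t^{*}$.

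For the first equality, the variational principle immediately gives $\sup\leq t^{*}$, since $P(-t\log|f'|)\geq h(\nu)-t\lambda(\nu)$ forces $h(\nu)/\lambda(\nu)\leq t$ whenever $P(-t\log|f'|)\leq 0$ and $\lambda(\nu)>0$. For the reverse, fix $t<t^{*}$; then $P(-t\log|f'|)>0=\mathrm{LB}(0)$, so $(t,0)\in\mathcal{N}$, and Theorem~\ref{thm uniquness and existence of the equilibrium state} supplies an equilibrium measure $\mu_{t,0}$ with $h(\mu_{t,0})-t\lambda(\mu_{t,0})=P(-t\log|f'|)>0$. Since $\mu_{t,0}$ is the $f$-lift via \eqref{eq def lift} of an ergodic Gibbs measure on $\inducoding$ and $\indumap$ is uniformly expanding, the Abramov--Kac formula \eqref{eq Abramov-Kac's formula} yields $\lambda(\mu_{t,0})>0$, hence $h(\mu_{t,0})/\lambda(\mu_{t,0})>t$. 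Letting $t\uparrow t^{*}$ with continuity of $p$ on $\mathcal{N}$ (Theorem~\ref{thm regularity of non induced pressure}) completes the first equality.

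For the second equality, the bound $\delta\geq t^{*}$ follows from the first together with the one-dimensional local dimension formula $\dim_{H}(\nu)=h(\nu)/\lambda(\nu)$ for ergodic $\nu$ with $\lambda(\nu)>0$ (Young / Hofbauer--Raith). For $\delta\leq t^{*}$ I would use a Moran-type cover on the induced system: the partition $\{\bar\Delta_{\omega}:\omega\in\edgesindu^{n}\}$ covers $\indulimit$, and the Renyi condition (F2) furnishes uniform bounded distortion for $\indumap$, giving $|\bar\Delta_{\omega}|^{t^{*}+\varepsilon}\asymp\exp((-t^{*}-\varepsilon)S_{n}\log|\indumap'|(y_{\omega}))$ for any $y_{\omega}\in\bar\Delta_{\omega}$. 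Combining this with $\indupressure(t^{*},0,0)\leq 0$ (translated from $P(-t^{*}\log|f'|)=0$ via Abramov--Kac, cf.\ Theorem~\ref{thm uniquness and existence of the equilibrium state}) makes the $(t^{*}+\varepsilon)$-Hausdorff sums decay exponentially, so $\mathcal{H}^{t^{*}+\varepsilon}(\indulimit)=0$; the residual set $\Lambda\setminus\indulimit=\bigcup_{i\in\p}\bigcup_{n\geq 0}f^{-n}(\{x_{i}\})$ is countable and contributes no dimension.

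The hard part will be the upper bound $\delta\leq t^{*}$ in the parabolic regime: a direct Moran cover of $\Lambda$ by $\edge^{n}$-cylinders fails because $|f'|\to 1$ near each $x_{i}$ inflates diameters and destroys bounded distortion. Passing to $\indumap$ circumvents this, but one must verify that working on $\indulimit$ rather than $\Lambda$ loses no dimension (handled by countability of $\Lambda\setminus\indulimit$) and that the measures $\mu_{t,0}$ do not degenerate to Dirac masses on $\{x_{i}\}$ as $t\uparrow t^{*}$ (prevented by Lemma~\ref{lemma equivalent condition not liftable} together with the condition $p(t,0)>\mathrm{LB}(0)=0$ defining $\mathcal{N}$).
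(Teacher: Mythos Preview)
The paper does not prove this statement at all: it simply cites \cite[Theorem 4.6]{mauldin2000parabolictobeappdated} as a black box, noting only that the $C^{1+\varepsilon}$ regularity and condition (F2) put us in the scope of that result. Your proposal is therefore a genuinely different route---a self-contained derivation from the Section~\ref{sec thermodynamic} machinery---and the first equality $\sup\{h(\nu)/\lambda(\nu)\}=t^{*}$ goes through exactly as you outline.

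However, your argument for $\delta\leq t^{*}$ contains a concrete error. You claim that $\Lambda\setminus\indulimit=\bigcup_{i\in\p}\bigcup_{n\geq 0}f^{-n}(\{x_{i}\})$ is countable, but this is false: $\indulimit\subset\inducingdomain$, and for each $i\in\p$ the entire set $I_{ii}$ lies in $\Lambda\setminus\inducingdomain\subset\Lambda\setminus\indulimit$, which is uncountable. The correct reduction is the one carried out in Lemma~\ref{lemma indu and nonindu limit}: $\Lambda$ minus a countable set is a \emph{countable union of bi-Lipschitz images} of $\indulimit$ (via the contractions $T_i^n$), whence $\dim_H(\Lambda)=\dim_H(\indulimit)$. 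This repairs the gap, but you should invoke that argument explicitly.

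There is a second, subtler issue. You want $\indupressure(t^{*},0,0)\leq 0$ and appeal to Theorem~\ref{thm uniquness and existence of the equilibrium state}, but that theorem only yields $\indupressure(b,q,p(b,q))=0$ for $(b,q)\in\mathcal{N}$, i.e.\ when $p(b,0)>0$. Since the parabolic fixed points force $p(t,0)\geq 0$ for all $t$, we have $p(t^{*},0)=0$ and $(t^{*},0)\notin\mathcal{N}$; the same obstruction applies for every $t>t^{*}$. To run the Moran cover on $\indulimit$ you need $\tilde P(-t\log|\indumap'\circ\codingmap|)<0$ for $t>t^{*}$, which is not delivered by Theorem~\ref{thm uniquness and existence of the equilibrium state}. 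One fix is to invoke Bowen's formula for the induced conformal system directly (as the paper does in Theorem~\ref{thm maximal measure and liftable problem}, using \cite[Theorem 4.2.13]{mauldin2003graph}); alternatively, pass to the limit $t\uparrow t^{*}$ along $\mathcal{N}$ and use that $\indupressure(t,0,\cdot)$ is strictly decreasing together with $p(t,0)\downarrow 0$, but this requires checking finiteness of $\indupressure(t,0,0)$ near $t^{*}$, which by \eqref{eq proof of thm maximal measure} amounts to $t^{*}(1+\gamma(f))>1$ and is not automatic from your setup.
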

By Theorem \ref{thm bowen formula}, we can apply the results of \cite{JJTOPnonuniformly}.
\begin{thm}\label{thm coditional variational principle}
    For all $\alpha\in (\alpha_{\text{min}},\alpha_{\text{max}})\setminus [\underline{\alpha}_\p,\overline{\alpha}_\p]$ we have 
    \[
    b(\alpha)=\sup\left\{\frac{h(\nu)}{\lambda(\nu)}:\nu\in M(f),\ \lambda(\nu)>0,\  \nu(\phi)=\alpha\right\}.
    \]
    Moreover, for all $\alpha\in [\underline{\alpha}_\p,\overline{\alpha}_\p]$ we have $b(\alpha)=\delta$. 
\end{thm}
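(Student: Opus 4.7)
The plan is to verify that Theorem \ref{thm bowen formula} places our setting within the hypotheses of \cite{JJTOPnonuniformly}, and then invoke their conditional variational principle and multifractal dimension result directly. In other words, I would not prove the statement ab initio; I would treat it as an application of \cite{JJTOPnonuniformly}, with Theorem \ref{thm bowen formula} supplying the one ingredient that was a priori missing in our class, namely the Bowen formula for $\delta$.

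First I would confirm the structural hypotheses needed by \cite{JJTOPnonuniformly}: $f \in \maps$ has finitely many full branches with parabolic indifferent fixed points; the induced map $\indumap$ is uniformly expanding and satisfies (F1) and (F2), so under (H) the induced potential $\indupote$ is locally H\"older; and the Bowen formula
\[
\delta = \min\{t \in \mathbb{R}: P(-t\log|f'|) \le 0\} = \sup\{h(\nu)/\lambda(\nu) : \nu \in M(f),\ \lambda(\nu) > 0\}
\]
holds by Theorem \ref{thm bowen formula}. With these in place, the conditional variational principle from \cite{JJTOPnonuniformly} applies verbatim to $B(\alpha)$ for every $\alpha \in (\alpha_{\min},\alpha_{\max})$ that lies outside the parabolic interval $[\underline{\alpha}_\p,\overline{\alpha}_\p]$. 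This is the first assertion.

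For the second assertion, $b(\alpha) = \delta$ on $[\underline{\alpha}_\p,\overline{\alpha}_\p]$. The upper bound $b(\alpha) \le \delta$ is immediate from $\Lambda_\alpha \subset \Lambda$. For the lower bound, the argument in \cite{JJTOPnonuniformly} expresses $\alpha$ as a convex combination $\alpha = \sum_{i \in \p} p_i \alpha_i$ of the parabolic values $\alpha_i = \phi(x_i)$ and builds, by a saturation/concatenation scheme, orbits whose Birkhoff averages of $\phi$ tend to $\alpha$ while their lower local dimensions tend to $\delta$. The saturation glues long blocks that are typical for an invariant measure whose dimension ratio $h(\nu)/\lambda(\nu)$ approximates $\delta$ (available by Theorem \ref{thm bowen formula}) with progressively longer excursions near each $x_i$ of relative frequency $p_i$; since the excursions contribute Lyapunov exponent zero and can be made negligible in Hausdorff dimension while carrying the Birkhoff average into the parabolic interval, the resulting set has full dimension $\delta$ and is contained in $\Lambda_\alpha$.

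The main obstacle is purely verificational: one must check carefully that the hypotheses we impose---finitely many branches with (F1), (F2) and (H)---fit the framework of \cite{JJTOPnonuniformly}, where the crucial inputs are a locally H\"older induced potential together with the Bowen formula. No new dynamical ideas beyond those of \cite{JJTOPnonuniformly} are required here; the role of this proposition is to package these two ingredients so that the conditional variational principle becomes available in our specific setting, thereby serving as the starting point for the refined statements in Theorem \ref{thm main}.
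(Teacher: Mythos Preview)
Your proposal is correct and matches the paper's own treatment: the paper does not give an independent proof of this theorem but simply states that, by Theorem \ref{thm bowen formula}, the results of \cite{JJTOPnonuniformly} apply. Your additional sketch of the saturation argument for the parabolic interval is a faithful summary of what \cite{JJTOPnonuniformly} does, but the paper itself does not even include that much detail.
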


\begin{lemma}\label{lemma indu and nonindu limit}
   We have $\delta=\dim_H(\indulimit)$. 
\end{lemma}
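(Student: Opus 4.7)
The plan is to reduce the claim $\delta=\dim_H(\indulimit)$ to showing that $\Lambda\setminus\indulimit$ is countable, so that countable stability of the Hausdorff dimension, together with the inclusion $\indulimit\subset\Lambda$, yields $\dim_H(\Lambda)=\dim_H(\indulimit)$.

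To carry this out, I would first observe that every $x\in\Lambda$ admits at least one coding $\omega\in\edge^{\mathbb{N}}$ with $\{x\}=\bigcap_{n}\bar\Delta_{\omega_1\cdots\omega_n}$, and by the very construction of $\inducoding$, we have $x\in\indulimit$ precisely when the coding of $x$ can be written as an infinite concatenation of admissible blocks drawn from $\edgesindu$ with the overlap structure dictated by \eqref{eq markov}. Using the explicit descriptions \eqref{eq form edges 1}--\eqref{eq foem edges 2} of $\edgepart_n$, one checks that this decomposition procedure can only stall when the tail of $\omega$ from some position $N+1$ onwards has the form $j^{\infty}$ for some $j\in\p$; any other tail contains either a symbol in $\h$ or a transition between two distinct symbols in $\p$, which provides a valid cut-off point for the next block (one reads off the block length as the length of the maximal parabolic repetition starting at the current position). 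Consequently every point of $\Lambda\setminus\indulimit$ is a preimage of a parabolic fixed point, giving
\begin{align*}
    \Lambda\setminus\indulimit\ \subset\ \bigcup_{N\geq 0}\bigcup_{j\in\p}f^{-N}(\{x_j\}).
\end{align*}

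Since $\#\edge<\infty$, each $f^{-N}(\{x_j\})$ is finite, so the right-hand side above is countable, and hence $\dim_H(\Lambda\setminus\indulimit)=0$. This yields $\dim_H(\Lambda)=\dim_H(\indulimit)$ and therefore $\delta=\dim_H(\indulimit)$. The only subtlety is that points in $J_0$ admit two symbolic codings, but $J_0$ is itself countable by Remark \ref{rem measurable bijection} and therefore irrelevant for Hausdorff dimension. I expect the main technical nuisance to be the careful verification, using the precise block forms in $\edgepart_n$, that whenever the tail of $\omega$ is not eventually constant-parabolic one can always extract a next admissible block; this follows routinely from \eqref{eq form edges 1}, \eqref{eq foem edges 2}, and \eqref{eq markov}.
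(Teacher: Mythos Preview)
Your argument has a genuine gap: the claim that $\Lambda\setminus\indulimit$ is countable is false. Recall that $\indulimit=\codingmap(\inducoding)$, and by the explicit form of the blocks in $\edgesindu$ (equations \eqref{eq form edges 1}--\eqref{eq foem edges 2}) every block $\omega\in\edgepart_n$ with $n\geq 2$ has the form $\omega_1 i^n\omega_{n+2}$ with $\omega_1\neq i$, while every block in $\edgepart_1$ beginning with a parabolic symbol $i$ must have second letter $\neq i$. Hence no element of $\edgesindu$ begins with $ii$ for $i\in\p$, and consequently $\indulimit\subset\inducingdomain$; in particular $\indulimit$ is disjoint from the interior of each $I_{ii}$. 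But $I_{ii}\cap\Lambda$ is uncountable (for the Manneville--Pomeau map it is a full subinterval), so $\Lambda\setminus\indulimit$ is uncountable. Your decomposition procedure stalls not only on eventually-parabolic tails but already at the very first step whenever the coding begins with $ii$.

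The paper's proof handles precisely this obstruction: it observes that $\Lambda=\indulimit\cup\bigcup_{i\in\p}I_{ii}$ and that, up to the countable set of preimages of parabolic fixed points, each $I_{ii}$ decomposes as $\bigcup_{n\geq 2}T_i^n(\indulimit)$. Since each $T_i^n$ is a bi-Lipschitz diffeomorphism, $\Lambda$ minus a countable set is a countable union of bi-Lipschitz images of $\indulimit$, whence $\dim_H(\Lambda)=\dim_H(\indulimit)$ by countable stability and bi-Lipschitz invariance. The key idea you are missing is that the uncountable ``deep parabolic'' part $I_{ii}$ is not negligible but is instead a bi-Lipschitz copy of (countably many copies of) $\indulimit$ itself.
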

\begin{proof}
    We first note that
    $\Lambda
    =\indulimit\cup\bigcup_{i\in\p}I_{ii}
    $
    \text{ and }
    $
     \bigcup_{i\in\p}\bigcup_{n=2}^{\infty} T_i^n(\indulimit)=\bigcup_{i\in\p}I_{ii}\setminus \bigcup_{i\in\p}\bigcup_{k=0}^\infty f^{-k}(x_i).
    $
Therefore, since for each $i\in\p$ and $n\in\mathbb{N}$ the map $T_i^n$ is bi-Lipschitz,  the set $\Lambda\setminus   \bigcup_{i\in\p}\bigcup_{k=0}^\infty f^{-k}(x_i)$ is a countable union of bi-Lipschitz images of $\indulimit$. Thus, by \cite[Section 3]{falconer2013fractaltobeappdated},  $\dim_H(\Lambda\setminus   \bigcup_{i\in\p}\bigcup_{k=0}^\infty f^{-k}(x_i))=\dim_H(\indulimit)$. Since $\bigcup_{i\in\p}\bigcup_{k=0}^\infty f^{-k}(x_i)$ is a countable set, we are done.   
    \end{proof}

\begin{thm}\label{thm maximal measure and liftable problem}
We have $\tilde P(-\delta\log |\indumap'\circ\codingmap|)=0$. Moreover, $\codingmeasure_{\delta,0}(\return\circ\codingmap)<\infty$  if and only if $\exponent>2/\delta-1$.  
\end{thm}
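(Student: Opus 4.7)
The plan is to prove the two assertions of the theorem separately. For the Bowen-type identity $\tilde P(-\delta\log|\indumap'\circ\codingmap|)=\indupressure(\delta,0,0)=0$ I will combine Lemma \ref{lemma indu and nonindu limit} with a limiting argument in the parameter $b$ using the thermodynamic results of Section \ref{sec thermodynamic}; the integrability dichotomy for $\codingmeasure_{\delta,0}(\return\circ\codingmap)$ then follows from a direct Gibbs estimate and condition (F1).

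For the upper bound $\indupressure(\delta,0,0)\leq 0$, observe that for every $b\in(0,\delta)$ the pair $(b,0)$ lies in $\mathcal{N}$ because $p(b,0)=P(-b\log|f'|)>0$ by Theorem \ref{thm bowen formula}; Theorem \ref{thm uniquness and existence of the equilibrium state} then yields $\indupressure(b,0,p(b,0))=0$. By continuity of $p$ and Theorem \ref{thm bowen formula}, $p(b,0)\downarrow p(\delta,0)=0$ as $b\uparrow \delta$. Because $\indupressure$ is monotone decreasing in both $b$ and $s$ (as $\log|\indumap'|>0$ and $\return>0$), choosing for each small $s>0$ a parameter $b<\delta$ with $p(b,0)=s$ gives $\indupressure(\delta,0,s)\leq \indupressure(b,0,s)=0$. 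Since Theorem \ref{thm variational principle induce} realises $\indupressure$ as a pointwise supremum of continuous affine functionals in $(b,q,s)$, the function $\indupressure$ is lower semicontinuous, and letting $s\downarrow 0$ forces $\indupressure(\delta,0,0)\leq 0$.

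For the matching lower bound, monotonicity in $s$ gives $\indupressure(t,0,0)\geq \indupressure(t,0,p(t,0))=0$ for every $t\in (1/(1+\exponent),\delta)$, the restriction $t>1/(1+\exponent)$ ensuring finiteness of $\indupressure(t,0,0)$ via Proposition \ref{prop finite pressure} and (F1). Convexity of $t\mapsto \indupressure(t,0,0)$ yields continuity on the interior of its effective domain, so whenever $\delta>1/(1+\exponent)$ we obtain $\indupressure(\delta,0,0)\geq 0$ by letting $t\uparrow \delta$. The inclusion $\delta>1/(1+\exponent)$ is the key external input and follows from Lemma \ref{lemma indu and nonindu limit} together with the Mauldin--Urba\'nski Bowen formula \cite{mauldin2003graph} for conformal countable Markov systems, which identifies $\dim_H(\indulimit)$ with the unique zero of $t\mapsto \tilde P(-t\log|\indumap'\circ\codingmap|)$ on the interior of its effective domain. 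The main obstacle is precisely this lower bound: in the regime $\exponent\leq 2/\delta-1$ the equilibrium measure of full dimension $\mu_\delta$ fails to exist in $M(f)$, so no single element of the variational supremum can directly witness the equality, and one must instead rely on the dimensional characterisation of $\delta$ through Mauldin--Urba\'nski's theory.

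For the second assertion, the identity $\indupressure(\delta,0,0)=0$ places $(\delta,0,0)$ in $Fin$, so Theorem \ref{thm Gibbs induce} provides the unique Gibbs measure $\codingmeasure_{\delta,0}=\codingmeasure_{\delta,0,0}$. The Gibbs estimate \eqref{eq gibbs} at $n=1$ together with (F1) gives $\codingmeasure_{\delta,0}([\omega])\asymp k^{-(1+\exponent)\delta}$ uniformly in $\omega\in \edgepart_k$, while \eqref{eq form edges 1}--\eqref{eq foem edges 2} give $|\edgepart_k|\asymp 1$ uniformly in $k$. Consequently,
\[
\codingmeasure_{\delta,0}(\return\circ\codingmap)=\sum_{k\geq 1} k\sum_{\omega\in \edgepart_k}\codingmeasure_{\delta,0}([\omega])\asymp \sum_{k\geq 1} k^{\,1-(1+\exponent)\delta},
\]
which is finite if and only if $(1+\exponent)\delta>2$, i.e.\ $\exponent>2/\delta-1$.
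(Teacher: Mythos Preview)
Your treatment of the second assertion is correct and coincides with the paper's argument: the Gibbs estimate together with (F1) gives $\codingmeasure_{\delta,0}(\return\circ\codingmap)\asymp\sum_n n^{1-\delta(1+\exponent)}$, and this is exactly what the paper does.

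For the first assertion the paper is much more direct. Using (F1) it shows $\sum_{\omega\in\edgesindu} e^{-b\log|\indumap'\circ\codingmap|([\omega])}\asymp\sum_n n^{-b(1+\exponent)}$, hence $\tilde P(-b\log|\indumap'\circ\codingmap|)\to\infty$ as $b\downarrow 1/(1+\exponent)$; this makes the induced system regular, so Bowen's formula \cite[Theorem~4.2.13]{mauldin2003graph} combined with Lemma~\ref{lemma indu and nonindu limit} yields $\tilde P(-\delta\log|\indumap'\circ\codingmap|)=0$ in one step. Your route is more circuitous and has two problems. First, a circularity: in your lower-bound step you invoke the Mauldin--Urba\'nski Bowen formula to obtain $\delta>1/(1+\exponent)$, but the statement you quote (``identifies $\dim_H(\indulimit)$ with the unique zero of $t\mapsto\tilde P(-t\log|\indumap'\circ\codingmap|)$'') already delivers $\indupressure(\delta,0,0)=0$ outright, so the separate upper/lower bounds via Section~\ref{sec thermodynamic} are redundant. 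Second, a genuine gap in the upper bound: your lower-semicontinuity claim does not follow from Theorem~\ref{thm variational principle induce} as written. The supremum there is taken over measures $\codingmeasure$ satisfying $\codingmeasure((-\delta\log|\indumap'|-s\return)\circ\codingmap)>-\infty$, a constraint that \emph{depends on $s$}. At $s=0$ this admits measures with $\codingmeasure(\return\circ\codingmap)=\infty$ but $\codingmeasure(\log|\indumap'\circ\codingmap|)<\infty$ (possible since $\log|\indumap'|\asymp(1+\exponent)\log n$ on $\{\return=n\}$), and for such $\codingmeasure$ the associated functional equals $-\infty$ for every $s>0$ yet is finite at $s=0$; it is therefore not lower semicontinuous, and $\indupressure$ is not exhibited as a supremum of continuous affine functions. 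The clean fix is to drop the Section~\ref{sec thermodynamic} detour and argue, as the paper does, via convexity/continuity of $t\mapsto\indupressure(t,0,0)$ on $(1/(1+\exponent),\infty)$ together with Bowen's formula.
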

\begin{proof}
    By (F1), 
    for all $b\in \mathbb{R}$ we have 
\begin{align}\label{eq proof of thm maximal measure}
\sum_{\omega\in \edgesindu}e^{-b\log|\indumap'\circ\codingmap|([\omega])}
 \asymp\sum_{n=1}^{\infty}\frac{1}{n^{b(1+\exponent)}}.     
\end{align}
 Therefore, since $\edgesindu$ is finitely primitive and $\log|\indumap'\circ\codingmap|$ is locally H\"older, we have $\lim_{b\to (1+\exponent)^{-1}}\tilde P(-b\log |\indumap'\circ\codingmap|)=\infty$. By Bowen's formula \cite[Theorem 4.2.13]{mauldin2000parabolictobeappdated} and Lemma \ref{lemma indu and nonindu limit}, this yields that $\tilde P(-\delta\log |\indumap'\circ\codingmap|)=0$. Moreover, by (F1) and  \eqref{eq gibbs}, we have 
\begin{align*}
    \codingmeasure_{\delta,0}(\return\circ\codingmap)=\sum_{n\in\mathbb{N}}\sum_{\omega\in \edgepart_n}n\codingmeasure_{\delta,0}([\omega])\asymp
    \sum_{n\in\mathbb{N}}\sum_{\omega\in \edgepart_n}ne^{-\delta\log|\indumap'\circ\codingmap|([\omega])}
    \asymp\sum_{n=1}^{\infty}\frac{n}{n^{\delta(1+\exponent)}}.
\end{align*}
Therefore, we obtain the last statement and the proof is complete.
\end{proof}


We define $\codingmeasure_{\delta}:=\codingmeasure_{\delta,0}$ and  $\measure_{\delta}=\codingmeasure_\delta\circ\codingmap^{-1}$.
If $\exponent>2/\delta-1$ then we define
\begin{align}
    \mu_{\delta}:=\frac{1}{\measure_{\delta}(\return )}\sum_{n=0}^{\infty}\sum_{k=n+1}^\infty\measure_{\delta}|_{\{\return=k\}}\circ f^{-n}.
\end{align}
We also define
$M_{\delta}:=M_{\delta,0}=\{\nu\in M(f):p(b,0)=h(\nu)+\nu(-\delta\log |f'|)\}.
$

\begin{prop}\label{prop equilibrium state Lambda}
    We have $M_{\delta}=\conv$ if  $\exponent\leq2/\delta-1$ and $M_{\delta}=\text{Conv}(\{\delta_{x_i}\}_{i\in\p},\mu_{\delta})$ if $\exponent>2/\delta-1$. 
\end{prop}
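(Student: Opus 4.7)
\textbf{Proof plan for Proposition \ref{prop equilibrium state Lambda}.}

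The plan is to first pin down $p(\delta,0)$, then show both inclusions. For the value, note that for each $i\in\p$ the Dirac mass $\delta_{x_i}$ lies in $M(f)$ with $h(\delta_{x_i})=0$ and $\lambda(\delta_{x_i})=\log|f'(x_i)|=0$, so $h(\delta_{x_i})-\delta\lambda(\delta_{x_i})=0\leq p(\delta,0)$; combined with the Bowen formula of Theorem \ref{thm bowen formula} (which gives $p(\delta,0)\leq 0$), this forces $p(\delta,0)=0$ and each $\delta_{x_i}\in M_\delta$. Since the functional $\nu\mapsto h(\nu)+\nu(-\delta\log|f'|)$ is affine whenever it is finite, $M_\delta$ is convex, so $\conv\subset M_\delta$. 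In the regime $\exponent>2/\delta-1$, $\mu_\delta$ is well-defined by Theorem \ref{thm maximal measure and liftable problem}, and Abramov--Kac's formula \eqref{eq Abramov-Kac's formula} together with $\indupressure(\delta,0,0)=0$ (Theorem \ref{thm maximal measure and liftable problem}) yields
\[
h(\mu_\delta)+\mu_\delta(-\delta\log|f'|)=\frac{h(\codingmeasure_\delta)+\codingmeasure_\delta(-\delta\log|\indumap'\circ\codingmap|)}{\measure_\delta(\return)}=\frac{\indupressure(\delta,0,0)}{\measure_\delta(\return)}=0,
\]
so $\mu_\delta\in M_\delta$ and convexity completes $\supset$ in this case.

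For the reverse inclusion, by the ergodic decomposition theorem and affinity, it suffices to treat ergodic $\nu\in M_\delta$. If $\nu\in\conv$, the ergodicity forces $\nu=\delta_{x_i}$ for some $i\in\p$. Otherwise, Lemma \ref{lemma equivalent condition not liftable} gives $\nu(\indulimit)>0$, so $\tilde\nu:=\nu|_{\indulimit}/\nu(\indulimit)\in M(\indumap)$ with $\tilde\nu(\return)=1/\nu(\indulimit)<\infty$. By Remark \ref{rem measurable bijection} lift $\tilde\nu$ to $\tilde\nu'\in M(\shift)$. Using the classical Abramov--Kac identity \eqref{eq classical Abramov-Kac's formula} exactly as in the proof of Theorem \ref{thm uniquness and existence of the equilibrium state},
\[
h(\tilde\nu')+\tilde\nu'\bigl((-\delta\log|\indumap'|)\circ\codingmap\bigr)=\tilde\nu(\return)\bigl(h(\nu)+\nu(-\delta\log|f'|)-p(\delta,0)\bigr)=0=\indupressure(\delta,0,0),
\]
so $\tilde\nu'$ is an equilibrium measure for $-\delta\log|\indumap'\circ\codingmap|$ on $\inducoding$. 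Invoking Theorem \ref{thm equilibrium state induce}, we will conclude $\tilde\nu'=\codingmeasure_\delta$ and therefore $\tilde\nu=\measure_\delta$.

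To apply Theorem \ref{thm equilibrium state induce} we must verify $\codingmeasure_\delta(-\delta\log|\indumap'\circ\codingmap|)>-\infty$. Using the Gibbs property \eqref{eq gibbs} at $(b,q,s)=(\delta,0,0)$ together with (F1) and the boundedness of $\#\edgepart_n$, $\codingmeasure_\delta(\log|\indumap'\circ\codingmap|)\asymp\sum_n(\log n)\,n^{-\delta(1+\exponent)}$; since $\tilde P(-\delta\log|\indumap'\circ\codingmap|)=0$ forces $\delta(1+\exponent)>1$ through \eqref{eq proof of thm maximal measure} and Proposition \ref{prop finite pressure}, this sum converges. This is the main technical hurdle: tying the integrability input of the induced equilibrium theory back to the Bowen exponent.

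Having identified $\tilde\nu=\measure_\delta$, Abramov--Kac forces $\tilde\nu(\return)=\measure_\delta(\return)$. In the case $\exponent>2/\delta-1$, Theorem \ref{thm maximal measure and liftable problem} gives $\measure_\delta(\return)<\infty$ and the recipe \eqref{eq def lift} reconstructs $\nu=\mu_\delta$, completing $\subset$. In the case $\exponent\leq 2/\delta-1$, Theorem \ref{thm maximal measure and liftable problem} instead gives $\measure_\delta(\return)=\infty$, contradicting $\tilde\nu(\return)<\infty$; hence no ergodic $\nu\in M_\delta\setminus\conv$ exists and $M_\delta=\conv$. The subtle point throughout is consistently tracking which integrability hypotheses of Theorem \ref{thm equilibrium state induce} and of the Abramov--Kac machinery are available, and noting that the dichotomy in the conclusion of the proposition reflects exactly the dichotomy of Theorem \ref{thm maximal measure and liftable problem}.
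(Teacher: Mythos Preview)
Your proposal is correct and follows essentially the same route as the paper: reduce to ergodic $\nu\in M_\delta$ via the ergodic decomposition, use Lemma \ref{lemma equivalent condition not liftable} to lift any $\nu\notin\conv$ to the induced shift, identify the lift with $\codingmeasure_\delta$ by uniqueness of the induced equilibrium measure, and then read off the dichotomy from Theorem \ref{thm maximal measure and liftable problem}. If anything your argument is more careful than the paper's, since you explicitly check the integrability hypothesis $\codingmeasure_\delta(\log|\indumap'\circ\codingmap|)<\infty$ required by Theorem \ref{thm equilibrium state induce} (via $\delta(1+\exponent)>1$), whereas the paper at that step cites Theorem \ref{thm uniquness and existence of the equilibrium state}, which strictly speaking applies only to $(b,q)\in\mathcal{N}$ and not to $(\delta,0)$.
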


\begin{proof}
By Theorem \ref{thm bowen formula}, we have $\bigcup_{i\in\p}\{\delta_{x_i}\}\subset M_{\delta}$.
Let $\nu\in M_{\delta}$ be an ergodic measure such that $\nu\notin \conv$. Then, by Lemma \ref{lemma equivalent condition not liftable}, we have $\nu(\indulimit)>0$. 
Let $\tilde\nu=\nu|_{\indulimit}/\nu(\indulimit)$.
By Remark \ref{rem measurable bijection}, there exists $\tilde \nu'\in M(\shift)$ such that $\tilde \nu=\tilde \nu'\circ\codingmap^{-1}$ and $h(\tilde \nu)=h(\tilde \nu')$.
Then, by Theorem \ref{thm variational principle induce}, Theorem \ref{thm maximal measure and liftable problem} and \eqref{eq classical Abramov-Kac's formula}, we obtain
\[
0=\tilde P(-\delta\log|\indumap'\circ\codingmap|)\geq h(\tilde \nu')-\delta\lambda(\tilde \nu')={\tilde \nu(\return)}(h(\nu)-\delta\lambda(\mu))=0,
\]
Therefore, $\tilde P(-\delta\log|\indumap'\circ\codingmap|)=h(\tilde \nu')-\delta\lambda(\tilde \nu')$ and $\tilde\nu'$ is an equilibrium measure for $\log |\indumap'\circ\codingmap|$. By the uniqueness of the equilibrium measure for $\log |\indumap'\circ\codingmap|$ (see Theorem \ref{thm uniquness and existence of the equilibrium state}), we obtain 
\begin{align}\label{eq proof equilibrium state geometric}
    \tilde \nu'=\codingmeasure_{\delta}.
\end{align}
We first assume that $\exponent\leq 2/\delta-1$. Then, by Theorem \ref{thm maximal measure and liftable problem} and \eqref{eq classical Abramov-Kac's formula}, we have $\infty=\codingmeasure_{\delta}(\return\circ\codingmap)=\tilde\nu'(\return\circ\codingmap)=1/\nu(\indulimit)<\infty$. This is a contradiction. Therefore, the set of ergodic measures in $M_\delta$ is $\bigcup_{i\in\p}\{\delta_{x_i}\}$. By the ergodic decomposition theorem (see \cite[Theorem 5.1.3]{viana}), $M_\delta=\conv$.
Next, we consider the case $\exponent>2/\delta-1$. In this case, by \eqref{eq proof equilibrium state geometric}, the set of ergodic measures in $M_\delta$ is $\bigcup_{i\in\p}\{\delta_{x_i}\}\cup \{\mu_\delta\}$. Thus, by ergodic decomposition theorem, we obtain $M_{\delta}=\text{Conv}(\{\delta_{x_i}\}_{i\in\p},\mu_{\delta})$. 
\end{proof}

By Theorem \ref{thm bowen formula}, for all $b\in (-\infty,\delta)$ we have $p(b,0)>0$ and thus, $(b,0)\in \mathcal{N}$. For $b\in(-\infty,\delta)$ we set $\codingmeasure_{b}:=\codingmeasure_{b,0}$, $\measure_b=\measure_{b,0}$ and $\mu_{b}:=\mu_{b,0}$. We recall that $\frat$ is defined by \eqref{eq def frat}.

\begin{lemma}\label{lemma equilibrium measure near Delta}
There exists $\{b_n\}_{n\in\mathbb{N}}\subset (-\infty,\delta)$ such that $\lim_{n\to\infty}b_n=\delta$ and   $\lim_{n\to\infty}\mu_{b_n}(\phi)\in \frat$.
    \end{lemma}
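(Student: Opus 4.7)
The plan is to combine compactness of $M(f)$ in the weak$^*$ topology with upper semi-continuity of the entropy map to force any weak$^*$ limit of $\{\mu_{b_n}\}$ (with $b_n \nearrow \delta$) into $M_\delta$, and then read off the conclusion from the explicit description of $M_\delta$ given in Proposition \ref{prop equilibrium state Lambda}.

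First I would fix an arbitrary sequence $b_n \in (-\infty,\delta)$ with $b_n \to \delta$. Since $\Lambda$ is compact, $M(f)$ is weak$^*$-compact, so after passing to a subsequence I may assume $\mu_{b_n} \to \mu$ weakly for some $\mu\in M(f)$. The potentials $\phi$ and $\log|f'|$ are continuous and bounded on $\Lambda$ (the latter because each $f_i$ is $C^{1+\varepsilon}$ on the compact set $\overline{W_i}$ and $|f'|\geq 1$ on $\Lambda$), so $\mu_{b_n}(\phi)\to\mu(\phi)$ and $\lambda(\mu_{b_n})\to\lambda(\mu)$. Convexity of $p$ on $\mathbb{R}^2$ yields continuity of $b\mapsto p(b,0)$, which combined with Theorem \ref{thm bowen formula} gives $p(b_n,0)\to p(\delta,0)=0$.

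Next I would invoke upper semi-continuity of $\nu\mapsto h(\nu)$ on $M(f)$, which holds for piecewise $C^{1+\varepsilon}$ interval maps with finitely many branches, to get $\limsup_n h(\mu_{b_n})\le h(\mu)$. Combined with the identity $h(\mu_{b_n})-b_n\lambda(\mu_{b_n})=p(b_n,0)$ coming from Theorem \ref{thm uniquness and existence of the equilibrium state}, this produces
\[
0=\lim_{n\to\infty} p(b_n,0)\le h(\mu)-\delta\lambda(\mu)\le p(\delta,0)=0,
\]
where the last inequality is the variational principle. Hence $\mu$ is an equilibrium measure for $-\delta\log|f'|$, i.e.\ $\mu\in M_\delta$.

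Finally, by Proposition \ref{prop equilibrium state Lambda}, $\mu$ is a convex combination of $\{\delta_{x_i}\}_{i\in\p}$ (when $\exponent\le 2/\delta-1$) or of $\{\delta_{x_i}\}_{i\in\p}\cup\{\mu_\delta\}$ (when $\exponent>2/\delta-1$). Integrating $\phi$ against such a combination produces a number in $[\underline{\alpha}_\p,\overline{\alpha}_\p]$ in the first case and in $[\min\{\mu_\delta(\phi),\underline{\alpha}_\p\},\max\{\mu_\delta(\phi),\overline{\alpha}_\p\}]$ in the second; in both cases this is exactly $\frat$. Therefore $\lim_n\mu_{b_n}(\phi)=\mu(\phi)\in\frat$. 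The main obstacle is the upper semi-continuity of entropy at the weak$^*$ limit; this is a nontrivial input but is a standard property in this finite-branch $C^{1+\varepsilon}$ setting, after which the result follows from a clean squeeze argument between $p(b_n,0)$ and the variational principle.
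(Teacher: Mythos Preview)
Your proposal is correct and follows essentially the same route as the paper's proof: take a sequence $b_n\to\delta$, extract a weak$^*$-convergent subsequence by compactness of $M(f)$, use upper semi-continuity of entropy together with continuity of $b\mapsto p(b,0)$ to force the limit measure into $M_\delta$, and then apply Proposition~\ref{prop equilibrium state Lambda} to conclude $\mu(\phi)\in\frat$. The only differences are cosmetic---you spell out the boundedness of $\log|f'|$ and phrase the key inequality as a squeeze, while the paper writes the same inequality in one line.
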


\begin{proof}
    We first note that, since $\edge$ is finite, $\Lambda$ is compact and $M(f)$ is also compact with respect to the weak* topology (see \cite[Theorem 6.10]{walters2000introduction}). Let $\{b_n\}_{n\in\mathbb{N}}\subset(-\infty,\delta)$ be a sequence such that $\lim_{n\to\infty}b_n=\delta$. Since $M(f)$ is compact, there exist a subsequence $\{b_{n_k}\}_{k\in\mathbb{N}}\subset \{b_n\}_{n\in\mathbb{N}}$ and $\mu\in M(f)$ such that $\mu_{b_{n_{k}}}$ converges to $\mu$ as $k\to\infty$ in the weak* topology. We shall show that $\mu\in M_{\delta}$. Since the entropy map is upper semi-continuous, $\mu_{b_{n_{k}}}$ ($k\in\mathbb{N}$) is the equilibrium measure for $-b_{n_{k}}\log |f'|$ and the map $b\mapsto p(b,0)$ is continuous on $\mathbb{R}$, we have 
    \[
    h(\mu)-\delta\lambda(\mu)\geq \limsup_{k\to\infty}( h(\mu_{b_{n_k}})-b_{n_k}\lambda(\mu_{b_{n_k}}))=\limsup_{k\to\infty}p(b_{n_k},0)=p(\delta,0).
    \]
Thus, $\mu\in M_\delta$. By Proposition \ref{prop equilibrium state Lambda}, we obtain $\lim_{k\to\infty}\mu_{b_{n_k}}(\phi)=\mu(\phi)\in \frat$. 
\end{proof}

For $\alpha\in (\alpha_{\text{min}},\alpha_{\text{max}})$ we define the function $\palpha:\mathbb{R}^2\rightarrow\mathbb{R}$ by 
\[
\palpha(b,q):=p(b,q)+q\alpha=P(q(-\phi+\alpha)-b\log|f'|).
\]
We denote by $\underline{i}$ the index in $\p$ satisfying $\alpha_{\underline{i}}=\underline{\alpha}_\p$ and by $\overline{i}$ the index in $\p$ satisfying $\alpha_{\overline{i}}=\overline{\alpha}_\p$, where $\underline{\alpha}_\p:=\min_{i\in\p}\{\alpha_i\}$ and $\overline{\alpha}_\p:=\max_{i\in\p}\{\alpha_i\}$.

\begin{lemma}\label{lemma positivity p alpha}
    For all $\alpha\in (\alpha_{\text{min}},\alpha_{\text{max}})$ and $q\in \mathbb{R}$ we have $\palpha(b(\alpha),q)\geq 0$. Moreover, for all $b\in \mathbb{R}$ we have 
    \begin{align}\label{eq asymptotic behavior of alpha pressure}
        \lim_{|q|\to\infty}\palpha(b,q)=\infty.
    \end{align}
\end{lemma}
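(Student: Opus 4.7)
The plan is to reduce both statements to the variational lower bound
\[
\palpha(b,q)=P\bigl(q(-\phi+\alpha)-b\log|f'|\bigr)\geq h(\nu)+q(\alpha-\nu(\phi))-b\lambda(\nu),\quad \nu\in M(f),
\]
and to choose the test measure appropriately. The first statement will be split into two cases according to Theorem \ref{thm coditional variational principle}, and the second will be handled by inserting measures on either side of $\alpha$.

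For the first part, if $\alpha\in[\underline{\alpha}_\p,\overline{\alpha}_\p]$ then $b(\alpha)=\delta$, and I plan to evaluate the variational bound at the Dirac mass $\delta_{x_i}$ at a parabolic fixed point. Since $h(\delta_{x_i})=0$ and $|f'(x_i)|=1$, this yields $\palpha(\delta,q)\geq q(\alpha-\alpha_i)$; choosing $i=\underline{i}$ when $q\geq 0$ and $i=\overline{i}$ when $q\leq 0$ makes the right-hand side nonnegative. If instead $\alpha\in(\alpha_{\min},\alpha_{\max})\setminus[\underline{\alpha}_\p,\overline{\alpha}_\p]$, the conditional variational principle from Theorem \ref{thm coditional variational principle} provides, for every $\epsilon>0$, a measure $\nu_\epsilon\in M(f)$ with $\nu_\epsilon(\phi)=\alpha$, $\lambda(\nu_\epsilon)>0$ and $h(\nu_\epsilon)>(b(\alpha)-\epsilon)\lambda(\nu_\epsilon)$. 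Inserting $\nu_\epsilon$ into the variational bound cancels the $q$-term and gives $\palpha(b(\alpha),q)>-\epsilon\lambda(\nu_\epsilon)$. Because each $|f_i'|$ is continuous on the compact set $\overline{\Delta_i}$ and $\edge$ is finite, $\log|f'|$ is bounded on $\Lambda$ by some $L<\infty$, so $\lambda(\nu_\epsilon)\leq L$ uniformly in $\nu_\epsilon$ and $q$; letting $\epsilon\to 0$ yields $\palpha(b(\alpha),q)\geq 0$.

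For the asymptotic statement I will use that $\alpha\in(\alpha_{\min},\alpha_{\max})$ to pick measures $\nu_-,\nu_+\in M(f)$ with $\nu_-(\phi)<\alpha<\nu_+(\phi)$. The variational bound evaluated at $\nu_-$ is affine in $q$ with strictly positive slope $\alpha-\nu_-(\phi)>0$, so $\palpha(b,q)\to\infty$ as $q\to+\infty$; evaluated at $\nu_+$ it has strictly negative slope $\alpha-\nu_+(\phi)<0$, forcing $\palpha(b,q)\to\infty$ as $q\to-\infty$. No serious obstacle arises: the only point requiring a little care is the first statement in the case $\alpha\notin[\underline{\alpha}_\p,\overline{\alpha}_\p]$, where the conditional variational principle produces only approximate maximizers, so one must ensure the error term $\epsilon\lambda(\nu_\epsilon)$ stays bounded uniformly in $q$, which the uniform bound $\lambda\leq L$ supplies.
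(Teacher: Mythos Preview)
Your proof is correct and follows essentially the same approach as the paper's. The only cosmetic differences are that in the parabolic case the paper uses a single convex combination $p\delta_{x_{\underline{i}}}+(1-p)\delta_{x_{\overline{i}}}$ with $\nu(\phi)=\alpha$ (so the $q$-term vanishes outright) instead of your case split on the sign of $q$, and that you make explicit the uniform bound $\lambda(\nu_\epsilon)\leq L$ needed to pass to the limit, which the paper leaves implicit.
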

\begin{proof}
Let $\alpha\in (\alpha_{\text{min}},\alpha_{\text{max}})$ and let $q\in \mathbb{R}$. 
    If $\alpha\in [\underline{\alpha}_\p,\overline{\alpha}_\p]=[\alpha_{\underline{i}},\alpha_{\overline{i}}]$ then there exists $p\in[0,1]$ such that we have $\alpha=p\alpha_{\underline{i}}+(1-p)\alpha_{\overline{i}}$. Then, by using the measure $p\delta_{x_{\underline{i}}}+(1-p)\delta_{x_{\overline{i}}}$, we can verify that $\palpha(b(\alpha),q)\geq 0$. If $\alpha\in (\alpha_{\text{min}},\alpha_{\text{max}})\setminus [\underline{\alpha}_\p,\overline{\alpha}_\p]$ then by Theorem \ref{thm coditional variational principle}, there exists $\{\mu_n\}_{n\in\mathbb{N}}\subset M(f)$ such that for all $n\in\mathbb{N}$ we have $\lambda(\mu_n)>0$, $\mu_n(\phi)=\alpha$ and $\lim_{n\to\infty}h(\mu_n)/\lambda(\mu_n)=b(\alpha)$. Thus, 
    $
    \palpha(b(\alpha),q)\geq h(\mu_n)-b(\alpha)\lambda(\mu_n)$ and  letting $n\to\infty$, we obtain $\palpha(b,q)\ge 0$.  

    Next, we shall show \eqref{eq asymptotic behavior of alpha pressure}. Let $\alpha\in (\alpha_{\text{min}},\alpha_{\text{max}})$ and let $b\in\mathbb{R}$. Since $\alpha \in (\alpha_{\text{min}},\alpha_{\text{max}})$, there exists $\underline{\nu},\overline{\nu}\in M(f)$ such that $\underline{\nu}(\phi)<\alpha<\overline{\nu}(\phi)$. Hence, we obtain
     $   \lim_{q\to\infty}\palpha(b,q)
        \geq 
         h(\overline{\nu})+\lim_{q\to\infty}q(\overline{\nu}(\phi)-\alpha)-b\lambda(\overline{\nu})=\infty
        $\text{ and }
        $
        \lim_{q\to-\infty}\palpha(b,q)
        \geq 
         h(\underline{\nu})+\lim_{q\to-\infty}q(\underline{\nu}
         (\phi)-\alpha)-b\lambda(\underline{\nu})=\infty.
         $
\end{proof}

\begin{prop}\label{prop only frat}
     For all $\alpha\in (\alpha_{\text{min}},\alpha_{\text{max}})\setminus \frat$ we have $b(\alpha)<\delta$.
\end{prop}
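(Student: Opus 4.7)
The strategy is a contradiction argument using weak$^*$ compactness of $M(f)$. Suppose there exists $\alpha \in (\alpha_{\min}, \alpha_{\max}) \setminus \frat$ with $b(\alpha) = \delta$. Since $\frat$ contains $[\underline{\alpha}_\p, \overline{\alpha}_\p]$ by definition \eqref{eq def frat}, such an $\alpha$ lies in $(\alpha_{\min}, \alpha_{\max}) \setminus [\underline{\alpha}_\p, \overline{\alpha}_\p]$, so Theorem \ref{thm coditional variational principle} supplies a sequence $\{\nu_n\}_{n\in\mathbb{N}} \subset M(f)$ with $\lambda(\nu_n) > 0$, $\nu_n(\phi) = \alpha$, and $h(\nu_n)/\lambda(\nu_n) \to \delta$.

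A preliminary observation is that $p(\delta, 0) = 0$: Theorem \ref{thm bowen formula} yields $p(\delta, 0) \leq 0$, while testing against $\delta_{x_i}$ for any $i \in \p$ (noting $h(\delta_{x_i}) = \lambda(\delta_{x_i}) = 0$) gives $p(\delta, 0) \geq 0$ via the variational principle. Since $\log|f'|$ is bounded on the compact set $\Lambda$, the Lyapunov exponents $\lambda(\nu_n)$ are uniformly bounded, hence
\[
h(\nu_n) - \delta \lambda(\nu_n) = \lambda(\nu_n) \cdot \left( \frac{h(\nu_n)}{\lambda(\nu_n)} - \delta \right) \longrightarrow 0.
\]
Using compactness of $M(f)$ in the weak$^*$ topology, I would extract a subsequence $\nu_{n_k} \to \nu_\infty \in M(f)$. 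Continuity of $\phi$ gives $\nu_\infty(\phi) = \alpha$, while upper semi-continuity of the measure-theoretic entropy together with weak$^*$ continuity of $\nu \mapsto \lambda(\nu) = \nu(\log|f'|)$ yields
\[
h(\nu_\infty) - \delta \lambda(\nu_\infty) \geq \limsup_{k \to \infty} \bigl( h(\nu_{n_k}) - \delta \lambda(\nu_{n_k}) \bigr) = 0 = p(\delta, 0).
\]
Combined with the variational principle, this forces $\nu_\infty \in M_\delta$.

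To conclude, I would invoke Proposition \ref{prop equilibrium state Lambda}, which identifies $M_\delta$ as $\conv$ when $\exponent \leq 2/\delta - 1$ and as $\mathrm{Conv}(\{\delta_{x_i}\}_{i \in \p}, \mu_\delta)$ when $\exponent > 2/\delta - 1$. In either case, $\nu_\infty(\phi)$ is a convex combination of the values $\{\alpha_i\}_{i \in \p}$ (augmented by $\mu_\delta(\phi)$ in the second case), and the range of such combinations is exactly $\frat$ by \eqref{eq def frat}. Therefore $\alpha = \nu_\infty(\phi) \in \frat$, contradicting the choice of $\alpha$.

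The main obstacle I anticipate is the bookkeeping at the weak$^*$ limit: one must be confident that $\nu \mapsto \lambda(\nu)$ is continuous (which rests on $\log|f'|$ being continuous and bounded on $\Lambda$, so that no mass is lost to branch-boundary singularities), and that the passage from the ratio convergence $h(\nu_n)/\lambda(\nu_n) \to \delta$ to the difference convergence $h(\nu_n) - \delta \lambda(\nu_n) \to 0$ is legitimate even if $\lambda(\nu_n) \to 0$; the uniform upper bound on $\lambda(\nu_n)$ handles both extremes. Given these, the remainder is a standard semi-continuity argument combined with the already-established classification of $M_\delta$.
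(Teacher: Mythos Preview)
Your argument is correct and takes a genuinely different route from the paper's. The paper does not pass to a weak$^*$ limit of a maximising sequence; instead it works with the auxiliary pressure function $p_\alpha(b,q)=p(b,q)+q\alpha$. Assuming $b(\alpha)=\delta$ for some $\alpha\in(\alpha_{\min},\min A)$, the paper observes (via Theorem~\ref{thm bowen formula} and Lemma~\ref{lemma positivity p alpha}) that $p_\alpha(\delta,0)=0$ while $p_\alpha(\delta,q)\geq 0$ for all $q>0$, so convexity in $q$ forces the right derivative $(p_\alpha)_q^+(\delta,0)\geq 0$. But Proposition~\ref{prop derivatibe of pressure without conditions} together with Proposition~\ref{prop equilibrium state Lambda} gives $(p_\alpha)_q^+(\delta,0)=-\min A+\alpha<0$, a contradiction; the case $\alpha>\max A$ is symmetric.

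Your approach is conceptually closer to the proof of Lemma~\ref{lemma equilibrium measure near Delta}: both extract a weak$^*$ limit in $M(f)$ and use upper semi-continuity of entropy plus continuity of $\nu\mapsto\lambda(\nu)$ to land the limit in $M_\delta$, then read off the constraint on $\nu_\infty(\phi)$ from Proposition~\ref{prop equilibrium state Lambda}. This is more elementary in that it bypasses the one-sided derivative formula of Proposition~\ref{prop derivatibe of pressure without conditions} entirely. The paper's route, on the other hand, packages the same information through the pressure function $p_\alpha$, which is exactly the object analysed in the subsequent Proposition~\ref{prop relationship}; so its proof of Proposition~\ref{prop only frat} is already a warm-up for the machinery used next. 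Both arguments rest on the same two ingredients you identified as potential obstacles---upper semi-continuity of $h$ and weak$^*$ continuity of $\lambda$---and the paper uses these freely (see the proof of Lemma~\ref{lemma equilibrium measure near Delta}), so your concerns there are not additional hypotheses beyond what the paper already assumes.
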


\begin{proof}
    We first consider the case $\alpha\in (\alpha_{\text{min}},\min \frat)$. For a contradiction, we assume that there exists  $\alpha\in (\alpha_{\text{min}},\min \frat)$ such that $b(\alpha)=\delta$. By Theorem \ref{thm bowen formula} and Lemma \ref{lemma positivity p alpha}, we have $\palpha(\delta,0)=0$ and $\palpha(\delta,q)\geq 0$ for all $q\in(0,\infty)$. By the convexity of the function $q\mapsto\palpha(\delta,q)$, we obtain $(\palpha)_q^+(\delta,0)\geq0$. However, by Proposition \ref{prop derivatibe of pressure without conditions} and Proposition \ref{prop equilibrium state Lambda}, we have
    \begin{align*}
        (\palpha)_q^+(\delta,0)=\sup_{\nu\in M_\delta}\{-\nu(\phi)\}+\alpha= -\min \frat+\alpha<0.
    \end{align*}
    This is a contradiction. Therefore, for all $\alpha\in (\alpha_{\text{min}},\min \frat)$ we have $b(\alpha)<\delta$. By a similar argument, one can show that  for all $\alpha\in (\max \frat,\alpha_{\text{max}})$ we have $b(\alpha)<\delta$. 
    \end{proof}

\begin{prop}\label{prop relationship}
    For each $\alpha\in (\alpha_{\text{min}},\min \frat)$ (resp. $\alpha\in (\max\frat,\alpha_{\text{max}})$) there exists a unique number $q(\alpha)\in (0,\infty)$ (resp. $q(\alpha)\in (-\infty,0)$) such that $(b(\alpha),q(\alpha))\in \mathcal{N}$ and  
    \begin{align}\label{eq relation in the statement}
\palpha(b(\alpha),q(\alpha))=0 \text{ and } \frac{\partial}{\partial q}\palpha(b(\alpha),q(\alpha))=0.        
    \end{align}
    Moreover, $\mu_{b(\alpha),q(\alpha)}$ is a unique measure $\nu\in M(f)$ satisfying $\lambda(\nu)>0$, $\nu(\phi)=\alpha$ and $b(\alpha)=h(\nu)/\lambda(\nu)$.
\end{prop}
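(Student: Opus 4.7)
The plan is to handle $\alpha\in(\alpha_{\min},\min\frat)$; the case $\alpha\in(\max\frat,\alpha_{\max})$ is entirely symmetric. I would study the function $F(q):=\palpha(b(\alpha),q)=p(b(\alpha),q)+q\alpha$. By Lemma \ref{lemma positivity p alpha}, $F\geq 0$ and $F(q)\to\infty$ as $|q|\to\infty$, and by the convexity of $p$, $F$ is convex, so $F$ attains its minimum on $\mathbb{R}$.

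I would first localize the minimizer inside the open half-line $(0,\infty)\cap\mathcal{N}$. At $q=0$, Proposition \ref{prop only frat} yields $b(\alpha)<\delta$, and Bowen's formula (Theorem \ref{thm bowen formula}) then gives $F(0)=p(b(\alpha),0)>0$. For $q<0$, the universal lower bound $p(b(\alpha),q)\geq\max_{i\in\p}(-q\alpha_i)=-q\overline{\alpha}_\p$ combined with $\alpha<\underline{\alpha}_\p\leq\overline{\alpha}_\p$ gives $F(q)\geq q(\alpha-\overline{\alpha}_\p)>0$, so any minimizer of $F$ lies in $(0,\infty)$. For each $q>0$, the inequality $\alpha<\underline{\alpha}_\p$ together with $F(q)\geq 0$ yields $p(b(\alpha),q)\geq -q\alpha>-q\underline{\alpha}_\p$, which means $(b(\alpha),q)\in\mathcal{N}$. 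By Theorem \ref{thm regularity of non induced pressure} and the non-degeneracy $\alpha_{\min}<\alpha_{\max}$, $p$ is real-analytic with $\partial_q^2 p>0$ on $\mathcal{N}$, so $F$ is strictly convex on $(0,\infty)$, and its minimizer $q(\alpha)\in(0,\infty)$ is unique.

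I would then identify $q(\alpha)$ with the quantities in the statement. Since $(b(\alpha),q(\alpha))\in\mathcal{N}$, the vanishing $F'(q(\alpha))=0$ combined with Ruelle's formula gives $\mu_{b(\alpha),q(\alpha)}(\phi)=\alpha$. Writing $\mu:=\mu_{b(\alpha),q(\alpha)}$ for the unique equilibrium measure supplied by Theorem \ref{thm uniquness and existence of the equilibrium state}, the equilibrium identity gives
\[
F(q(\alpha))=p(b(\alpha),q(\alpha))+q(\alpha)\alpha=h(\mu)-b(\alpha)\lambda(\mu).
\]
The construction of $\mu$ in Section \ref{sec thermodynamic} ensures $\mu(\indulimit)>0$, so Lemma \ref{lemma equivalent condition not liftable} yields $\mu\notin\conv$ and hence $\lambda(\mu)>0$; the conditional variational principle (Theorem \ref{thm coditional variational principle}) applied to $\mu$ then gives $h(\mu)/\lambda(\mu)\leq b(\alpha)$, i.e., $F(q(\alpha))\leq 0$. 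Combined with $F\geq 0$, this forces $F(q(\alpha))=0$ and $h(\mu)/\lambda(\mu)=b(\alpha)$.

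For uniqueness of the measure, suppose $\nu\in M(f)$ satisfies $\lambda(\nu)>0$, $\nu(\phi)=\alpha$, and $h(\nu)/\lambda(\nu)=b(\alpha)$. From $F(q(\alpha))=0$ we read off $p(b(\alpha),q(\alpha))=-q(\alpha)\alpha$, and a direct computation gives $h(\nu)+\nu(-q(\alpha)\phi-b(\alpha)\log|f'|)=b(\alpha)\lambda(\nu)-q(\alpha)\alpha-b(\alpha)\lambda(\nu)=-q(\alpha)\alpha=p(b(\alpha),q(\alpha))$, so $\nu$ is also an equilibrium measure for $-q(\alpha)\phi-b(\alpha)\log|f'|$, forcing $\nu=\mu$ by Theorem \ref{thm uniquness and existence of the equilibrium state}. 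The main obstacle is ensuring the minimizer actually lands inside $\mathcal{N}$, where the refined thermodynamic formalism of Section \ref{sec thermodynamic} is valid; the key point is that the strict inequality $\alpha<\underline{\alpha}_\p$ combined with $F\geq 0$ from Lemma \ref{lemma positivity p alpha} pushes the minimizer into $\mathcal{N}$ automatically, after which strict convexity and Ruelle's formula deliver the rest.
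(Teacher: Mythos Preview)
Your argument has a genuine gap at the step where you locate the minimizer of $F$. You show $F(0)>0$ and $F(q)>0$ for $q<0$, and then conclude ``any minimizer of $F$ lies in $(0,\infty)$.'' But knowing $F>0$ on $(-\infty,0]$ does not by itself force the global minimum of a convex function to lie in $(0,\infty)$: the minimum value of $F$ could simply be positive and attained at $q\le 0$. Your later computation that $F(q(\alpha))\le 0$ would close this loop, but that computation already \emph{presupposes} $q(\alpha)\in(0,\infty)\subset\mathcal{N}$ in order to invoke Ruelle's formula and obtain $\mu_{b(\alpha),q(\alpha)}(\phi)=\alpha$. So the reasoning is circular.

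What is actually needed is to show $F'(0)=\partial_q p_\alpha(b(\alpha),0)<0$, i.e.\ $\mu_{b(\alpha)}(\phi)>\alpha$, and this is not free: there is no a~priori reason why the equilibrium measure for $-b(\alpha)\log|f'|$ should integrate $\phi$ to something above $\alpha$. The paper establishes this by a separate argument using Lemma~\ref{lemma equilibrium measure near Delta}: one finds $b_0\in(b(\alpha),\delta)$ with $\mu_{b_0}(\phi)$ close to $\frat$ (via weak$^*$ compactness and the classification of equilibrium states at $b=\delta$ in Proposition~\ref{prop equilibrium state Lambda}), hence $\partial_q p_\alpha(b_0,0)<0$; if $\partial_q p_\alpha(b(\alpha),0)\ge 0$, the intermediate value theorem produces $b'\in[b(\alpha),b_0)$ with $\mu_{b'}(\phi)=\alpha$, which then contradicts the conditional variational principle since $p_\alpha(b',0)>0$ forces $h(\mu_{b'})/\lambda(\mu_{b'})>b'\ge b(\alpha)$. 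This step is the substantive missing ingredient in your proposal; once $F'(0)<0$ is secured, the rest of your argument (strict convexity, Ruelle's formula, the equilibrium identity, and the uniqueness via Theorem~\ref{thm uniquness and existence of the equilibrium state}) goes through exactly as you wrote and matches the paper.
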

\begin{proof}
If $\alpha_{\text{min}}=\alpha_{\text{max}}$, there is nothing to prove. Thus, we assume that $\alpha_{\text{min}}<\alpha_{\text{max}}$.
Let $\alpha\in(\alpha_{\text{min}},\min \frat)$. Then, by Theorem \ref{thm bowen formula}, Lemma \ref {lemma positivity p alpha} and Proposition \ref{prop only frat}, we have $p(b(\alpha),0)>0$ and $p(b(\alpha),q)\geq -q\alpha>-q\min A\geq \max_{i\in\p}\{-q\alpha_i\}$ for all $q\in (0,\infty)$. Hence, $\{b(\alpha)\}\times[0,\infty)\in \mathcal{N}$. 
On the other hand, by Theorem \ref{thm bowen formula}, for all $b\in (-\infty,\delta)$ we have $\palpha(b,0)>0$ and thus, $(-\infty,\delta)\times\{0\}\in \mathcal{N}$. 
Hence, by Theorem \ref{thm regularity of non induced pressure}, the function $\palpha$ is real-analytic on a open set $\mathcal{O}$ containing $\{b(\alpha)\}\times [0,\infty)\cup (-\infty,\delta)\times \{0\}$. We will show that 
\begin{align}\label{eq proof of relation}
\frac{\partial}{\partial q}\palpha(b(\alpha),0)<0.    
\end{align}
For a contradiction, we assume that 
\begin{align}\label{eq proof of relation contradiction}
\frac{\partial}{\partial q}\palpha(b(\alpha),0)\geq 0.    
\end{align}
We take a small number $\epsilon>0$ such that  $\alpha<\min \frat-\epsilon$. By Lemma \ref{lemma equilibrium measure near Delta}, there exists $b_0\in(b(\alpha),\delta)$ such that  $\min\frat-\epsilon\leq \mu_{b_0}(\phi)$. Then, by Theorem \ref{thm regularity of non induced pressure}, we have 
\begin{align*}
    \frac{\partial}{\partial q}\palpha(b_0,0)=-\mu_{b_0}(\phi)+\alpha<-\min A+\epsilon+\alpha<0.
\end{align*}
Combining this with \eqref{eq proof of relation contradiction} and using the continuity of the function $b\mapsto \frac{\partial}{\partial q}\palpha(b,0)$, we conclude that there exists $b'\in [b(\alpha),b_0)$ such that $\frac{\partial}{\partial q}\palpha(b',0)=0$. This yields that $\mu_{b'}(\phi)=\alpha$. Thus, by Theorem \ref{thm coditional variational principle}, we obtain 
\[
0<\palpha(b',0)=h(\mu_{b'})-b'\lambda(\mu_{b'})\leq h(\mu_{b'})-b(\alpha)\lambda(\mu_{b'})\leq h(\mu_{b'})-\frac{h(\mu_{b'})}{\lambda(\mu_{b'})}\lambda(\mu_{b'})=0.
\]
This is a contradiction and we obtain \eqref{eq proof of relation}. Since we assume that $\alpha_{\text{min}}<\alpha_{\text{max}}$, Theorem \ref{thm regularity of non induced pressure} yields that the function $q\mapsto\palpha(b(\alpha),q)$ is strictly convex on $[0,\infty)$. Thus, by \eqref{eq proof of relation} and Lemma \ref{lemma positivity p alpha}, there exists a unique number $q(\alpha)\in (0,\infty)$ such that $\frac{\partial}{\partial q}\palpha(b(\alpha),q(\alpha))=0$. In particular $\mu_{b(\alpha),q(\alpha)}(\phi)=\alpha$. Moreover, by Theorem \ref{thm coditional variational principle} and Lemma \ref{lemma positivity p alpha}, we obtain
\[
0\leq \palpha(b(\alpha),q(\alpha))=h(\mu_{b(\alpha),q(\alpha)}) -b(\alpha)\lambda(\mu_{b(\alpha),q(\alpha)})\leq 0
\]
and thus, $\palpha(b(\alpha),q(\alpha))=0$ and $b(\alpha)=h(\mu_{b(\alpha),q(\alpha)})/\lambda(\mu_{b(\alpha),q(\alpha)})$. Hence, for all $\alpha\in(\alpha_{\text{min}},\min \frat)$, the proof of the first half is complete. By a similar argument, we can show that for all $\alpha\in (\max A, \alpha_{\text{max}})$  there exists a unique number $q(\alpha)\in (-\infty,0)$ such that $(b(\alpha),q(\alpha))\in \mathcal{N}$ and \eqref{eq relation in the statement} holds. Moreover, we have $b(\alpha)=h(\mu_{b(\alpha),q(\alpha)})/\lambda(\mu_{b(\alpha),q(\alpha)})$. 

Next, we shall show that the second half. Let $\alpha\in (\alpha_{\text{min}},\alpha_{\text{max}})\setminus A$ and let $\nu$ be in $M(f)$ such that $\lambda(\nu)>0$, $\nu(\phi)=\alpha$ and $b(\alpha)=h(\nu)/\lambda(\nu)$. Then, we have
\[
h(\nu)+q(\alpha)(-\nu(\phi)+\alpha)-b(\alpha)\lambda(\nu)=0=p(b(\alpha),q(\alpha))+q(\alpha)\alpha.
\]
Thus, $\nu$ is an equilibrium measure for $-q(\alpha)\phi-b(\alpha)\log|f'|$. Therefore, since $(b(\alpha),q(\alpha))\in \mathcal{N}$, the uniqueness of an equilibrium measure for $-q(\alpha)\phi-b(\alpha)\log|f'|$ (Theorem \ref{thm uniquness and existence of the equilibrium state}) yields that $\nu=\mu_{b(\alpha),q(\alpha)}$. 
\end{proof}

\begin{prop}\label{prop real analytic}
    The functions $\alpha\mapsto b(\alpha)$ and $\alpha\mapsto q(\alpha)$ are real-analytic on $(\alpha_{\text{min}},\alpha_{\text{max}})\setminus \frat$.
\end{prop}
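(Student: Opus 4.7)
The plan is to apply the real-analytic implicit function theorem to the system derived in Proposition \ref{prop relationship}. For $(\alpha,b,q)\in\mathbb{R}^3$ define
\[
F_1(\alpha,b,q):=p(b,q)+q\alpha,\qquad F_2(\alpha,b,q):=\frac{\partial p}{\partial q}(b,q)+\alpha.
\]
By Proposition \ref{prop relationship}, for each $\alpha\in(\alpha_{\min},\alpha_{\max})\setminus\frat$ the pair $(b(\alpha),q(\alpha))$ lies in the open set $\mathcal{N}$ and satisfies $F_1(\alpha,b(\alpha),q(\alpha))=F_2(\alpha,b(\alpha),q(\alpha))=0$. Since Theorem \ref{thm regularity of non induced pressure} asserts that $p$ is real-analytic on $\mathcal{N}$, both $F_1$ and $F_2$ are real-analytic on a neighborhood of $(\alpha,b(\alpha),q(\alpha))$ in $\mathbb{R}^3$.

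Next I would compute the Jacobian of $(F_1,F_2)$ with respect to $(b,q)$. A direct calculation gives
\[
\det\begin{pmatrix} \partial_b F_1 & \partial_q F_1 \\ \partial_b F_2 & \partial_q F_2 \end{pmatrix}
=\det\begin{pmatrix} p_b & p_q+\alpha \\ p_{qb} & p_{qq} \end{pmatrix}.
\]
At $(b(\alpha),q(\alpha))$ the defining equation $F_2=0$ forces $p_q+\alpha=0$, so the determinant collapses to $p_b(b(\alpha),q(\alpha))\cdot p_{qq}(b(\alpha),q(\alpha))$.

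The key step is to show that both factors are nonzero. By Ruelle's formula in Theorem \ref{thm regularity of non induced pressure}, $p_b(b(\alpha),q(\alpha))=-\lambda(\mu_{b(\alpha),q(\alpha)})$, which is strictly negative: the induced map $\indumap$ is uniformly expanding, so the Gibbs measure $\codingmeasure_{b(\alpha),q(\alpha)}$ has $\lambda(\codingmeasure_{b(\alpha),q(\alpha)})>0$, and Abramov--Kac's formula \eqref{eq Abramov-Kac's formula} then gives $\lambda(\mu_{b(\alpha),q(\alpha)})>0$. For the second factor, note that $\alpha\in(\alpha_{\min},\alpha_{\max})$ forces $\alpha_{\min}<\alpha_{\max}$, and so the final statement of Theorem \ref{thm regularity of non induced pressure} yields $p_{qq}(b(\alpha),q(\alpha))\neq 0$; together with convexity of $p$ this upgrades to $p_{qq}(b(\alpha),q(\alpha))>0$.

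Since the Jacobian is invertible, the real-analytic implicit function theorem produces a real-analytic map $\alpha\mapsto(b(\alpha),q(\alpha))$ in a neighborhood of each point of $(\alpha_{\min},\alpha_{\max})\setminus\frat$, and the uniqueness clause of Proposition \ref{prop relationship} guarantees that these local branches agree with the globally defined $b(\cdot)$ and $q(\cdot)$. The only subtle point in this strategy is verifying the strict positivity $p_{qq}(b(\alpha),q(\alpha))>0$; fortunately all of the work is already done inside Theorem \ref{thm regularity of non induced pressure}, so no additional argument is required here.
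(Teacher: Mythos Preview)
Your proposal is correct and essentially identical to the paper's own proof: both apply the real-analytic implicit function theorem to the system $(p(b,q)+q\alpha,\ p_q(b,q)+\alpha)=0$ at $(b(\alpha),q(\alpha))\in\mathcal{N}$, reduce the Jacobian to $p_b\cdot p_{qq}$ via $p_q+\alpha=0$, and then invoke Ruelle's formula and the last clause of Theorem \ref{thm regularity of non induced pressure} (using $\alpha_{\min}<\alpha_{\max}$) to show both factors are nonzero. Your explicit remark that the uniqueness in Proposition \ref{prop relationship} identifies the implicit-function branches with the globally defined $b(\cdot),q(\cdot)$ is a small clarification the paper leaves implicit.
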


\begin{proof}
If $\alpha_{\text{min}}=\alpha_{\text{max}}$, there is nothing to prove. Thus, we assume that $\alpha_{\text{min}}<\alpha_{\text{max}}$.
We proceed with this proof as in \cite[Lemma 9.2.4]{Barreirabook}.
We define the function $G:\mathbb{R}^3\rightarrow \mathbb{R}^2$ by $G(\alpha,b,q):= (\palpha(b,q), \frac{\partial}{\partial q}\palpha(b,q))=(p(b,q)+q\alpha, \frac{\partial}{\partial q}p(b,q)+\alpha)$. By Proposition \ref{prop relationship}, for all $\alpha\in (\alpha_{\text{min}},\alpha_{\text{max}})\setminus \frat$ we have $G(\alpha,b(\alpha),q(\alpha))=0$. 
We want to apply the implicit function theorem in order to show the regularity of the functions $b$ and $q$. To do this, it is sufficient to show that 
\begin{align*}
    \text{det}\begin{pmatrix}
   \frac{\partial}{\partial b}\palpha(b(\alpha),q(\alpha)) & \frac{\partial}{\partial q}\palpha(b(\alpha),q(\alpha)) \\
   \frac{\partial^2}{\partial b\partial q}\palpha(b(\alpha),q(\alpha)) & 
   \frac{\partial^2}{\partial q^2}\palpha(b(\alpha),q(\alpha))
\end{pmatrix}
\neq 0.
\end{align*}
Note that, by Proposition \ref{prop relationship} and Theorem \ref{thm regularity of non induced pressure}, we have $\frac{\partial}{\partial q}\palpha(b(\alpha),q(\alpha))=0$ and $\frac{\partial}{\partial b}\palpha(b(\alpha),q(\alpha))=\lambda(\mu_{b(\alpha),q(\alpha)})>0$.
By $\alpha_{\text{min}}<\alpha_{\text{max}}$, Theorem \ref{thm regularity of non induced pressure} implies that $\frac{\partial^2}{\partial q^2}\palpha(b(\alpha),q(\alpha))\neq 0$. Therefore, by the implicit function theorem and Theorem \ref{thm regularity of non induced pressure}, we are done. 
\end{proof}

\begin{prop}\label{prop monotone}
    The function $b$ is strictly increasing on $(\alpha_{\text{min}},\min \frat)$ and  strictly decreasing on $(\max \frat, \alpha_{\text{max}})$.
\end{prop}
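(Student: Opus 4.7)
The plan is to differentiate the defining identity from Proposition \ref{prop relationship} with respect to $\alpha$ and read off the sign of $b'(\alpha)$ from the sign of $q(\alpha)$. First, if $\alpha_{\min} = \alpha_{\max}$ the stated intervals are empty and there is nothing to prove, so I assume $\alpha_{\min}<\alpha_{\max}$. Fix $\alpha \in (\alpha_{\min}, \min\frat)$. By Proposition \ref{prop relationship} we have $(b(\alpha), q(\alpha))\in\mathcal{N}$ with $q(\alpha)>0$ and
\[
\palpha(b(\alpha), q(\alpha)) = 0, \qquad \frac{\partial}{\partial q}\palpha(b(\alpha), q(\alpha)) = 0.
\]
By Proposition \ref{prop real analytic} both $\alpha \mapsto b(\alpha)$ and $\alpha \mapsto q(\alpha)$ are real-analytic on this interval, and $\palpha$ is real-analytic on $\mathcal{N}$ by Theorem \ref{thm regularity of non induced pressure}, so I may differentiate the first identity in $\alpha$.

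Since $\palpha(b,q) = p(b,q) + q\alpha$, direct computation using Theorem \ref{thm regularity of non induced pressure} gives
\[
\frac{\partial \palpha}{\partial b}(b(\alpha), q(\alpha)) = -\lambda(\mu_{b(\alpha),q(\alpha)}), \quad \frac{\partial \palpha}{\partial \alpha}(b(\alpha), q(\alpha)) = q(\alpha).
\]
Differentiating $\palpha(b(\alpha), q(\alpha)) = 0$ with respect to $\alpha$ and using the vanishing of $\partial_q \palpha$ at this point, the $q'(\alpha)$ term drops out and I obtain
\[
-\lambda(\mu_{b(\alpha), q(\alpha)})\, b'(\alpha) + q(\alpha) = 0,
\]
that is, $b'(\alpha) = q(\alpha)/\lambda(\mu_{b(\alpha), q(\alpha)})$.

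Since $(b(\alpha), q(\alpha)) \in \mathcal{N}$ and $\mu_{b(\alpha),q(\alpha)}$ is the equilibrium measure given by Theorem \ref{thm uniquness and existence of the equilibrium state}, it is liftable (not in $\conv$), hence $\lambda(\mu_{b(\alpha),q(\alpha)})>0$. Combined with $q(\alpha)>0$, this yields $b'(\alpha)>0$, so $b$ is strictly increasing on $(\alpha_{\min}, \min \frat)$. The argument for $\alpha \in (\max \frat, \alpha_{\max})$ is identical, except Proposition \ref{prop relationship} now provides $q(\alpha)<0$, so $b'(\alpha)<0$ and $b$ is strictly decreasing there. There is no real obstacle to this argument: the only thing to verify carefully is that the vanishing of $\partial_q \palpha$ at $(b(\alpha),q(\alpha))$ eliminates the $q'(\alpha)$ contribution in the implicit differentiation, which is exactly the content of the second identity in Proposition \ref{prop relationship}.
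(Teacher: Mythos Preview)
Your argument is correct and yields the classical multifractal identity $b'(\alpha)=q(\alpha)/\lambda(\mu_{b(\alpha),q(\alpha)})$; the chain-rule computation is sound because the second relation in Proposition~\ref{prop relationship} kills the $q'(\alpha)$ contribution, and the positivity of $\lambda(\mu_{b(\alpha),q(\alpha)})$ is already contained in Proposition~\ref{prop relationship} (or, more concretely, follows from Abramov--Kac together with the uniform expansion of $\indumap$). The sign of $b'(\alpha)$ is then dictated by the sign of $q(\alpha)$, exactly as you say.

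This is, however, a genuinely different route from the paper's. The paper does not differentiate at all: given $\alpha_1<\alpha_2$ in $(\alpha_{\min},\min\frat)$, it constructs by hand a measure $\mu$ with $\mu(\phi)>\alpha_2$ and $h(\mu)/\lambda(\mu)$ close to $\delta>b(\alpha_1)$ (obtained as a convex combination of a near-maximizing measure from Theorem~\ref{thm bowen formula} with a Dirac mass $\delta_{x_{\underline i}}$), then forms $\bar\nu=\bar p\,\mu_{b(\alpha_1),q(\alpha_1)}+(1-\bar p)\mu$ with $\bar\nu(\phi)=\alpha_2$ and checks via affinity of entropy that $h(\bar\nu)/\lambda(\bar\nu)>b(\alpha_1)$, invoking Theorem~\ref{thm coditional variational principle} to conclude $b(\alpha_2)>b(\alpha_1)$. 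Your approach is cleaner and delivers the explicit formula for $b'(\alpha)$ as a bonus, but it relies on Proposition~\ref{prop real analytic} (hence on the implicit function theorem and the non-degeneracy $\partial_q^2\palpha\neq 0$ from Theorem~\ref{thm regularity of non induced pressure}); the paper's argument is more elementary in that it uses only the variational characterization of $b(\alpha)$ and convex combinations of measures, without any smoothness of $\alpha\mapsto(b(\alpha),q(\alpha))$.
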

\begin{proof}
    Let $\alpha_1,\alpha_2\in (\alpha_{\text{min}},\min \frat)$.
    We assume that $\alpha_1<\alpha_2$. By Proposition \ref{prop only frat}, we have $b(\alpha_1)<\delta$. 
    We take a small $\epsilon>0$ with $\alpha_2<\min\frat-\epsilon$ and $b(\alpha_1)<\delta-\epsilon$. We first show that there exists $\mu\in M(f)$ such that  
    \begin{align}\label{eq proof of monotonicity saturated}
        \lambda(\mu)>0,\ \min\frat-\epsilon<\mu(\phi) \text{ and } \delta-\epsilon<\frac{h(\mu)}{\lambda(\mu)}.
    \end{align}
    By Theorem \ref{thm bowen formula}, there exists $\nu\in M(f)$ such that $\lambda(\nu)>0$ and $\delta-\epsilon<h(\nu)/\lambda(\nu)$. Moreover, there exists $p\in [0,1)$ such that $\min A-\epsilon<p\alpha_{\underline{i}}+(1-p)\nu(\phi)$. We set $\mu:=p\delta_{x_{\underline{i}}}+(1-p)\nu$. Then, by the affinity of the entropy map (\cite[Theorem 8.1]{walters2000introduction}), we obtain $h(\mu)/\lambda(\mu)=h(\nu)/\lambda(\nu)$ and thus, $\mu$ satisfies \eqref{eq proof of monotonicity saturated}.
    
    Since $\alpha_1<\alpha_2<\mu(\phi)$, there exists $\bar p\in (0,1)$ such that $\alpha_2=\bar p\alpha_1+(1-\bar p)\mu(\phi)$.  
We set $\bar\nu=\bar p\mu_{b(\alpha_1),q(\alpha_1)}+(1-\bar p)\mu$. By Proposition \ref{prop relationship}, \eqref{eq proof of monotonicity saturated} and the affinity of the entropy map, we obtain $\bar\nu(\phi)=\alpha_2$ and 
\begin{align*}
&h(\bar \nu)=\bar ph(\mu_{b(\alpha_1),q(\alpha_1)})+(1-\bar p)h(\mu)
\\&>b(\alpha_1)(\bar p\lambda(\mu_{b(\alpha_1),q(\alpha_1)})+(1-\bar p)\lambda(\mu))    
=b(\alpha_1)\lambda(\bar \nu).
\end{align*}
Hence, by Theorem \ref{thm coditional variational principle}, we obtain $b(\alpha_1)<b(\alpha_2)$. This implies that $b$ is strictly increasing on $(\alpha_{\text{min}},\min A)$. By the similar argument, we can show that $b$ is strictly decreasing on $(\max A, \alpha_{\text{max}})$.
\end{proof}

\emph {Proof of Theorem $\ref{thm main}$.}
(1) of Theorem \ref{thm main} follows from Proposition \ref{prop relationship}. (2) follows from Proposition \ref{prop real analytic}, and (3) from Propositions \ref{prop only frat} and \ref{prop monotone}.
\qed

\subsection*{Acknowledgments}
This work was supported by the JSPS KAKENHI 25KJ1382.

\bibliographystyle{abbrv}
\bibliography{reference}
 \nocite{*}

\end{document}